\documentclass[12pt]{article}
\usepackage{xcolor}
\usepackage{amsmath,amsthm,amsfonts,amssymb,amscd,caption,color,subcaption,cite}
\usepackage[title]{appendix}
\usepackage{graphicx}
\usepackage[mathscr]{eucal}
\allowdisplaybreaks[4]
\usepackage[figurename=Fig.]{caption}
\numberwithin{equation}{section}
\allowdisplaybreaks[4]

\newtheorem {Lemma}{Lemma}[section]
\newtheorem {Theorem} {Theorem}[section]
\newtheorem {Corollary}{Corollary}[section]

\newtheorem {Claim} {Claim}[section]
\newtheorem {Conjecture} {Conjecture}[section]
\usepackage{tikz}
\usetikzlibrary{positioning}
\usepackage{xcolor}
\usepackage{circuitikz}
\usetikzlibrary{patterns}
\usetikzlibrary{decorations.pathreplacing}
\usepackage[cm]{fullpage}
\begin{document}

\title{Tight spectral conditions for the Hamiltonicity of $K_{1,r}$-free split graphs}

\author{Yiting Cai $^{a}$\footnote{E-mail: yitingcai@m.scnu.edu.cn},
Haiyan Guo $^{b}$\footnote{E-mail: ghaiyan0705@m.scnu.edu.cn}, 
Hong-Jian Lai $^{c,d}$\footnote{E-mail: hjlai2015@hotmail.com}, 
Bo Zhou $^{a}$\footnote{%Corresponding author; 
E-mail: zhoubo@scnu.edu.cn}\\
$^{a}$ School of  Mathematical Sciences, South China Normal University,\\
Guangzhou 510631, P.R. China\\
 $^{b}$School of Computer Science, South China Normal University, \\ 
Guangzhou 510631, P.R. China\\
$^{c}$ School of Mathematics and Systems Science,
Guangdong Polytechnic Normal University,\\ Guangzhou 510665, P.R. China\\
$^{d}$ Department of Mathematics, West Virginia University, \\ Morgantown, WV 26506, USA}

\date{}
\maketitle

\begin{abstract}
The Hamiltonicity and related subjects of split graphs, and in particular $K_{1,r}$-free split graphs with $r\ge 3$ received much attention. Dai et al. [Discrete Math. 345 (2022) 112826]
conjectured that every $(r-1)$-connected $K_{1,r}$-free split graph is Hamiltonian. They proved the case when $r=4$, and earlier Renjith and Sadagopan [Int. J. Found. Comput. Sci. 33 (2022) 1--32] proved the case
when $r=3$. Recently, Liu, Song, Zhang and Lai [Discrete Math. 346 (2023) 113402]
proved that a split graph  is Hamiltonian if and
only if it is fully cycle extendable. So  for $r=3,4$ every $(r-1)$-connected $K_{1,r}$-free split graph is fully cycle extendable. We give tight spectral sufficient conditions for a  $K_{1,r}$-free split graph to be Hamiltonian for $r=3,4$.  
%
%Providing tight spectral conditions for specific graph classes (such as split graphs)  offers efficient sufficient criteria for determining Hamiltonicity in such graphs, and thus falls within the scope of algorithm design and analysis. At the same time, the Hamiltonian problem is a central problem in combinatorial optimization. The paper provides tight sufficient conditions via spectral radius, which are typical results in discrete optimization.
\\ \\
{\bf Keywords:}  $K_{1,r}$-free graphs, split graphs, Hamiltonian graphs, Hamiltonian problem, spectral radius \\  \\
{\bf AMS Classification:}  05C45, 05C40, 05C50
\end{abstract}

\section{Introduction}

In this paper, we consider only finite simple graphs.
A graph $G$ is  Hamiltonian if it contains a Hamiltonian cycle, i.e., a cycle contains all vertices of $G$. A graph is Hamilton-connected if for every pair of distinct vertices $u, v$, there exists a Hamiltonian path (a path containing all vertices) from $u$ to $v$.
A graph  is fully cycle extendable if every vertex  lies on a cycle of length $3$ and for every non-hamiltonian cycle $C$ there is a cycle $C'$  such that $|V(C')| =|V(C)|+1$ and $V(C)\subset V(C')$.

A graph $G$ is a split graph if the vertex set $V(G)$ can be partitioned into a clique $K$ and an independent set $I$, either of which
may be empty. The clique can be chosen to be a maximum clique, and in such case, we say that $G$ is of type $(K,I)$, and write $G=(K,I)$ if there is no confusion.
Split graphs were introduced by Foldes
and Hammer \cite{FH} in 1977, and were studied further in \cite{BH,FH1,KLM,M,RS,TH}.
A split graph $G=(K,I)$ is said to be complete if any vertex of $K$ is adjacent all vertices of $I$.
An obvious necessary condition that  a  split graph $G=(K,I)$ is Hamiltonian
is $|K|\ge |I|$.

Let $r$ be an integer with $r\ge 3$. A graph $G$  is $K_{1,r}$-free if $G$ does not have an induced subgraph isomorphic to $K_{1,r}$, that is,
 no vertex of $G$ has $r$ pairwise nonadjacent neighbors. A $K_{1,3}$-free graph is known as a
claw-free graph. The Hamiltonicity of claw-free graphs received much attention, see, e.g. \cite{Chen1,Chen2,Chen3,Chen4}.
 Renjith and Sadagopan \cite{RS} and Dai, Zhang, Broerama and Zhang \cite{DZ} showed the following results.

\begin{Theorem} \cite{RS} \label{Th1}
Let $G=(K,I)$ be a claw-free split graph. Then $G$ is Hamiltonian if and only if $G$ is $2$-connected.
\end{Theorem}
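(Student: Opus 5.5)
Necessity is immediate: a Hamiltonian graph on at least three vertices is $2$-connected. So all the work is in sufficiency, and I would handle it by building a Hamiltonian cycle directly from a matching structure between $I$ and $K$. The trivial cases ($|I|\le 1$, or $G$ complete) can be disposed of at once, so assume $|I|\ge 2$. Since $G$ is $2$-connected, every $v\in I$ has degree at least $2$, and all its neighbors lie in $K$.

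\emph{Step 1: claw-freeness on $K$.} Take $x\in K$. Any two of its $I$-neighbors are nonadjacent. If $x$ had three $I$-neighbors, they would form a claw centered at $x$. So every $x\in K$ has at most two neighbors in $I$. Moreover, if $x$ has two $I$-neighbors $u,w$, then every other vertex $y\in K$ must be adjacent to $u$ or $w$; otherwise $\{u,w,y\}$ is a claw at $x$.

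\emph{Step 2: counting.} Each $v\in I$ has at least two $K$-neighbors, and each $K$-vertex has at most two $I$-neighbors. Consider the bipartite graph $B$ between $I$ and $K$. The goal is to find in $B$ a collection of vertex-disjoint paths alternating $K,I,K,\dots,I,K$ that covers $I$, with all endpoints in $K$. Such paths, together with the remaining vertices of $K$, can be chained into one cycle: consecutive path endpoints, and any leftover $K$-vertices, are joined by edges of the clique $K$. It suffices to find a set of $K$-vertices $x_1,\dots,x_m$ and an ordering $v_1,\dots,v_m$ of $I$ with $v_i\sim x_i,x_{i+1}$ (indices mod $m$), with the $x_i$ distinct when $|K|>|I|$, or a suitable variant that uses a clique edge somewhere. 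Equivalently, $B$ should contain either a cycle through all of $I$, or a linear forest covering $I$ whose components are paths with both ends in $K$.

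\emph{Step 3: the main obstacle.} Producing this structure is where the real difficulty lies, and I would handle it by analyzing the components of $B$. In $B$, $I$-vertices have degree at least $2$ and $K$-vertices have degree at most $2$. Restrict $B$ to $I\cup N(I)$. Each component is then either a cycle or a path whose ends are in $K$: a $K$-vertex of degree $1$ is a path end, while an $I$-vertex of degree at least $2$ can be a branching point. The obstruction is an $I$-vertex of degree at least $3$ in $B$, since the component containing it is then a subdivided tree-like structure rather than a path.

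Here I would use the second part of Step 1 together with $|K|\ge |I|$. Note that $|K|\ge|I|$ follows from $2$-connectivity plus the counting: $2|I|\le e(B)\le 2|K|$. From each component I would select a spanning path through all of its $I$-vertices. This is possible because $K$-vertices have degree at most $2$, so one can trim extra $K$-neighbors of a branching $I$-vertex, keeping two of them. The remaining $K$-vertices then become free vertices in the clique.

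Two points need care. First, if a single component of $B$ is a cycle covering all of $I$, that cycle already works. Second, if $B$ is a cycle that misses some $K$-vertices, it must be opened at one $K$-vertex and rerouted through the clique; that is always possible because the clique contains more than the cycle's $K$-vertices, or else the cycle itself is Hamiltonian. Finally, concatenating the chosen paths via clique edges yields the Hamiltonian cycle.
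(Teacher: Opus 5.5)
The paper only quotes this theorem from Renjith and Sadagopan and gives no proof, so I am judging your argument on its own.

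\textbf{What works.} Necessity, Step 1, the bound $|K|\ge|I|$, and the reduction in Step 2 are fine. That reduction covers $I$ by vertex-disjoint $K$--$I$--$\cdots$--$K$ paths of $B$ and then chains them, together with the leftover $K$-vertices, using clique edges.

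\textbf{The gap is Step 3, which carries the whole proof.} There you argue only from the degree facts $d_I(x)\le 2$ for $x\in K$ and $d(v)\ge 2$ for $v\in I$. These facts do not give what you claim.
\begin{itemize}
\item Trimming every $I$-vertex to exactly two $K$-neighbours gives a subgraph of $B$ whose components are paths \emph{or cycles}. The whole difficulty is to avoid cycles, and your rule ``keep two of them'' does nothing about this. If a branching vertex $c$ keeps $k,k'$ and $N(a)=\{k,k'\}$ for another $a\in I$, you create the cycle $c\,k\,a\,k'\,c$.
\item The stronger claim, that each component of $B$ contains a single path through all its $I$-vertices, is false under those degree conditions. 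Take $I=\{c,a_1,a_2,a_3\}$ and $K$-vertices $k_i\sim c,a_i$ and $p_i\sim a_i$ for $i=1,2,3$. Then $B$ is connected and all the degree conditions hold. But the $a_i$ can only be linked through $c$, so no single path covers $I$.
\item The rerouting of a cycle component is not ``always possible'' either. Take $K=\{x_1,x_2,y\}$, $I=\{v_1,v_2\}$ and $N(v_1)=N(v_2)=\{x_1,x_2\}$. This graph is $2$-connected, $B[I\cup N(I)]$ is a $4$-cycle missing $y$, and the graph is not Hamiltonian, since deleting $x_1,x_2$ leaves three components. It has a claw at $x_1$. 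So claw-freeness is what must exclude this case, yet Step 3 never actually uses it.
\end{itemize}

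\textbf{The missing idea is to use the second half of Step 1 structurally.} If $N_I(x)=\{u,w\}$, then $N(u)\cup N(w)=K$. Hence the pairs $N_I(x)$, taken over all $x\in K$ with $d_I(x)=2$, pairwise intersect. Moreover, if at least one such $x$ exists, every vertex of $K$ has an $I$-neighbour. This leaves four cases.
\begin{itemize}
\item (a) There is no such $x$. Then every $v\in I$ has two private $K$-neighbours $x_v,y_v$, and you chain the paths $x_v\,v\,y_v$.
\item (b) The pairs are the three sides of a triangle $\{u,w,z\}$. Then no $K$-vertex can have a single $I$-neighbour, so $I=\{u,w,z\}$. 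Also $K$ splits into nonempty classes $E_{uw},E_{wz},E_{uz}$, and $u\to E_{uw}\to w\to E_{wz}\to z\to E_{zu}\to u$ is a Hamiltonian cycle.
\item (c) All pairs contain a common $c$, with at least two distinct partners. Then every vertex of $K$ is adjacent to $c$, which contradicts the maximality of $K$.
\item (d) All pairs equal $\{c,a\}$. Then $I=\{c,a\}$ and $N(c)\cup N(a)=K$. By maximality of $K$, neither neighbourhood is all of $K$, so you can pick $y\in N(c)\setminus N(a)$, $z\in N(a)\setminus N(c)$ and $x\in N(c)\cap N(a)$. The path $y\,c\,x\,a\,z$ then closes up through $K\setminus\{x,y,z\}$.
\end{itemize}
This case analysis closely parallels Claim \ref{C1} in the proof of Theorem \ref{N1}. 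It also shows directly that a cycle component of $B$ can only occur when it is already a Hamiltonian cycle of $G$.
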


\begin{Theorem} \cite{DZ} \label{Th2}
Every $3$-connected $K_{1,4}$-free split graph is Hamiltonian.
\end{Theorem}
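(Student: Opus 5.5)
The plan is to build a Hamiltonian cycle directly. First route $I$ through $K$ using a matching-type structure, then absorb the remaining clique vertices. Since $K$ is a clique, absorbing those vertices is free.

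\emph{Setup and necessary counting.} Let $G=(K,I)$ be $3$-connected and $K_{1,4}$-free, with $K$ a maximum clique. We may assume $|I|\ge 2$. (If $|I|\le 1$, Hamiltonicity is immediate: $K$ is a clique with $|K|\ge 3$, and a single vertex of $I$ has at least $3$ neighbours in $K$, so it can be inserted between two consecutive ones.) Every $v\in I$ has $N(v)\subseteq K$ and $d(v)\ge 3$.

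The key consequence of $K_{1,4}$-freeness concerns a vertex $u\in K$. Its neighbours in $I$ are pairwise nonadjacent. Moreover, if $|K|$ is large, $u$ also has neighbours in $K$ that miss some of these $I$-vertices. Hence every $u\in K$ has at most $3$ neighbours in $I$, and at most $2$ if some vertex of $K$ misses all of $N_I(u)$.

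We then verify $|K|\ge |I|$, with some slack, by double counting edges between $K$ and $I$. Each vertex of $I$ sends at least $3$ edges into $K$, and each vertex of $K$ receives at most $3$. This gives $3|I|\le e(K,I)\le 3|K|$.

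\emph{Construction.} We want $|I|$ vertex-disjoint paths of the form $x_i v_i y_i$, with $v_i\in I$ and $x_i,y_i\in K$. Equivalently, we want a system in which each $v\in I$ is assigned two private neighbours in $K$, with no $K$-vertex used more than twice overall. Because $K$ is a clique, such paths can be concatenated through $K$: join $y_i$ to $x_{i+1}$, and insert the unused $K$-vertices anywhere. This yields a Hamiltonian cycle.

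So the problem reduces to a degree-constrained subgraph problem in the bipartite graph $H=G[K,I]$. We need a spanning subgraph $F$ of $H$ with $d_F(v)=2$ for $v\in I$ and $d_F(u)\le 2$ for $u\in K$, such that $F$ contains no cycle. A collection of such $F$-paths, possibly sharing endpoints in $K$, still concatenates into a cycle provided $F$ is a linear forest covering $I$.

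We would obtain $F$ via Hall's theorem applied to the graph in which each $K$-vertex is duplicated into two copies, and each $I$-vertex needs two copies of distinct $K$-vertices. The Hall condition reads $2|S|\le 2|N(S)|$ for all $S\subseteq I$, counting with the right multiplicities. We would use $3$-connectivity and the bound $d_I(u)\le 3$ to prove $|N(S)|\ge |S|+1$ whenever $N(S)\neq K$. Indeed, $N(S)$ separates $S$ from the rest, and the local counting $3|S|\le e(S,N(S))\le 3|N(S)|$ handles the remaining case.

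\emph{Main obstacle.} The hard step is excluding cycles in $F$ (and the tight case $|N(S)|=|S|$ with all degrees exactly $3$). A cycle in $F$ alternating between $K$ and $I$ cannot be concatenated into the big cycle. If $F$ contains such a cycle $C$, we would try to break it: pick some $u\in K\cap C$ and a vertex of $I$ on $C$ with a third neighbour outside $C$, which exists by $3$-connectivity. We then reroute, using $K_{1,4}$-freeness to show that the new neighbour still has spare capacity. A potential-function argument (minimising the number of cycles of $F$) should finish this.

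If this exchange argument becomes too delicate, the fallback is a minimal counterexample argument. Take a longest cycle $C$ covering $K$, and pick $v\in I\setminus V(C)$. Its at least $3$ neighbours on $C$ have successors on $C$. Together with $v$ these form an independent-looking set, and this set yields an induced $K_{1,4}$ centred at a neighbour of $v$, which is a contradiction.
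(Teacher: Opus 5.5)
The paper gives no proof of this theorem. It quotes it from Dai et al.\ and uses it as a black box (e.g.\ in Lemma~\ref{DI1}), so your argument has to stand on its own, and it does not. The step you call the main obstacle is the entire content of the theorem, and you leave it as ``should finish this''. Worse, the object you aim for need not exist. Any $F\subseteq G[K,I]$ with $d_F(v)=2$ for all $v\in I$ has $2|I|$ edges on at most $|K|+|I|$ vertices, so it is a forest only if $|I|\le |K|-1$. Your count $3|I|\le e(K,I)\le 3|K|$ has no slack, and equality really occurs. Let $K=K_4$ and join four independent vertices to the four $3$-subsets of $K$, so that $G[K,I]\cong Q_3$. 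This graph is $3$-connected and $K_{1,4}$-free, and $K$ is a maximum clique. Every Hamiltonian cycle alternates between $K$ and $I$, so no linear forest of the required kind exists. The case $|K|=|I|$ therefore needs its own argument, namely a Hamiltonian cycle of $G[K,I]$. This is a finite check: there every vertex of $K$ has exactly three $I$-neighbours, each with exactly two further $K$-neighbours, and $K_{1,4}$-freeness forces any two vertices of $K$ to share an $I$-neighbour, whence $|K|\le 7$. Also, ``vertex-disjoint paths $x_iv_iy_i$'' would need $|K|\ge 2|I|$; that is not equivalent to the capacity-$2$ version you actually use.

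When $|K|>|I|$, a spanning $F$ with $d_F=2$ on $I$ and $d_F\le 2$ on $K$ does exist, but not for the reason you give. Your counting already yields $|S|\le|N(S)|$, while $3$-connectivity only yields $|N(S)|\ge 3$, not $|S|+1$. To keep the two neighbours of each $v\in I$ distinct, use the cut condition $2|S|\le 2|T|+e(S,K\setminus T)$ for $S\subseteq I$, $T\subseteq K$; it follows from $e(S,K\setminus T)\ge 3|S|-3|T|$.

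The real work is destroying the cycles of $F$, and here nothing is proved. An $I$-vertex on a cycle $C$ of $F$ with a neighbour off $C$ comes from $K_{1,4}$-freeness plus counting, not from $3$-connectivity. It can fail only when $C$ is a Hamiltonian cycle of $G[K,I]$. More importantly, you never show that the new neighbour $w$ has $d_F(w)\le 1$, nor that the swap avoids closing a new cycle or fusing $C$ with another one. So there is no strictly decreasing potential.

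The fallback is wrong as stated. For a longest cycle $C\supseteq K$ the set $\{v,a_1^+,a_2^+,a_3^+\}$ is indeed independent. But then at most one $a_i^+$ lies in $K$ and the others lie in $I$. Nothing forces a neighbour $a_j$ of $v$ to be adjacent to $a_i^+\in I$ for $i\ne j$, so no centre of a $K_{1,4}$ is produced. As a sanity check, the same argument with $r-1$ neighbours of $v$ would settle the Dai et al.\ conjecture for every $r$, which the paper records as open beyond $r=3,4$.
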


After proving Theorem \ref{Th2},  Dai, Zhang, Broerama and Zhang \cite{DZ} proposed the following conjecture, which is true for $r=3,4$ (by Theorems \ref{Th1} and \ref{Th2}).

\begin{Conjecture} \cite{DZ}
Every $(r-1)$-connected $K_{1,r}$-free split graph is Hamiltonian.
\end{Conjecture}

Liu, Song, Zhang and Lai \cite{LSZL} studied the Hamilton-connectedness and fully cycle extendability of
$K_{1,r}$-free split graphs. Let $r=3,4$.  They showed that
a $r$-connected $K_{1,r}$-free split graph is Hamilton-connected, and as an immediate consequence of the claim that a split graph is Hamiltonian if and
only if it is fully cycle extendable, every $(r-1)$-connected $K_{1,r}$-free split graph is fully cycle extendable.

The spectral radius $\rho(M)$ of a square (complex) matrix $M$  the largest modulus of the eigenvalues of $M$. 
Let $A(G)$ be the adjacency mattrix of $G$, where  $A(G)=(a_{uv})_{u,v\in V(G)}$ with   $a_{uv}=1$ if $u$ and $v$ are adjacent and $0$ otherwise.
The spectral radius of a graph $G$ is defined as $\rho(G)=\rho(A(G))$.

For integers $n$ and $t$ with $1\le t \le \frac{n}{2}$ and $n\ge 4$, let $G_{n,t}$  be the graph obtained from $K_{n-t}\cup tK_1$  $V(tK_1)=\{u_1,\dots, u_t\}$ and $V(K_{n-t})=\{v_1,\dots, v_{n-t}\}$ by adding edges $u_iv_i$ for all $i=1,\dots, \max\{t-1,1\}$ and $u_tv_j$ for all $j=t,\dots, n-t$ and $t\ge2$, see Fig. \ref{f1}.

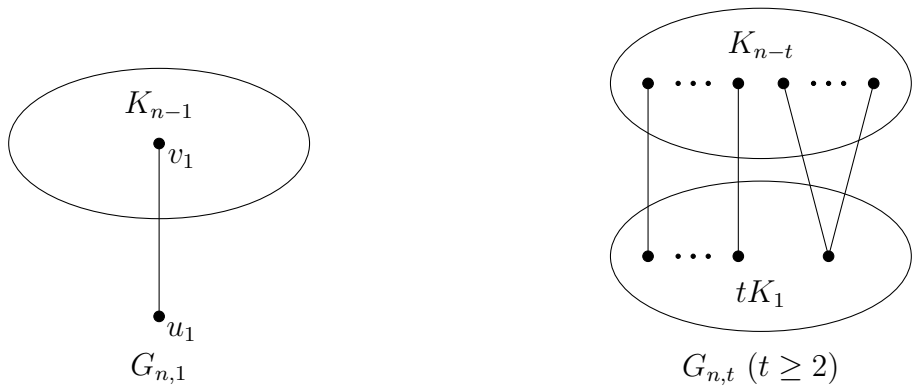
\begin{figure}[htbp]
\centering
\begin{tikzpicture}
\draw (2,-0.8) ellipse (2 and 1);
\filldraw [black] (2,-0.8) circle (2pt);
\draw  [black](2,-0.8)--(2,-3.1);
\filldraw [black] (2,-3.1) circle (2pt);
\node at (2, -0.3) {$K_{n-1}$};
\node at (2, -3.8) {$G_{n,1}$};
\node at (2.3, -3.3) {$u_1$};
\node at (2.3, -1.0) {$v_1$};
\draw (10,0) ellipse (2 and 1);
\draw (10,-2.3) ellipse (2 and 1);
\filldraw [black] (8.5,0) circle (2pt);
\filldraw [black] (8.9,0) circle (0.8pt);
\filldraw [black] (9.1,0) circle (0.8pt);
\filldraw [black] (9.3,0) circle (0.8pt);
\filldraw [black] (9.7,0) circle (2pt);
\filldraw [black] (10.3,0) circle (2pt);
\filldraw [black] (10.7,0) circle (0.8pt);
\filldraw [black] (10.9,0) circle (0.8pt);
\filldraw [black] (11.1,0) circle (0.8pt);
\filldraw [black] (11.5,0) circle (2pt);

\filldraw [black] (10.9,-2.3) circle (2pt);
\draw [black] (10.9,-2.3)--(11.5,0);
\draw [black] (10.9,-2.3)--(10.3,0);
\filldraw [black] (8.5,-2.3) circle (2pt);
\draw [black] (8.5,-2.3)--(8.5,0);
\filldraw [black] (8.9,-2.3) circle (0.8pt);
\filldraw [black] (9.1,-2.3) circle (0.8pt);
\filldraw [black] (9.3,-2.3) circle (0.8pt);
\filldraw [black] (9.7,-2.3) circle (2pt);
\draw [black] (9.7,-2.3)--(9.7,0);
%\draw[decorate,decoration={brace,raise=8pt,amplitude=0.2cm},black] (10.3,-0.1) -- (11.5,-0.1);
%\node at (10.9, 0.5) {$t-1$};
\node at (10, -2.8) {$tK_1$};
\node at (10, 0.5) {$K_{n-t}$};
\node at (10, -3.8) {$G_{n,t}~(t\ge2)$};
\end{tikzpicture}
\caption{Graphs $G_{n,t}$.}
\label{f1}
\end{figure}

\begin{Theorem}\label{N1}
Let $G=(K,I)$ be a connected claw-free split graph of order $n$, where $n\ge \max\{4, 2|I|\}$. If
\[
\rho(G)\ge \rho(G_{n,|I|}), 
\]
then $G$ is Hamiltonian, unless $G\cong G_{n,|I|}$.
\end{Theorem}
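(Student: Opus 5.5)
By Theorem \ref{Th1}, a claw-free split graph is Hamiltonian exactly when it is $2$-connected. So it suffices to show the following: if $G=(K,I)$ is connected, claw-free, of order $n\ge\max\{4,2|I|\}$, and not $2$-connected, then $\rho(G)\le\rho(G_{n,|I|})$, with equality only when $G\cong G_{n,|I|}$. Throughout, write $t=|I|$.

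\textbf{Step 1: structure of a non-$2$-connected $G$.} Suppose $G$ has a cut vertex $w$. Since $K$ is a clique, $G-w$ has one component containing $K\setminus\{w\}$, and every other component is a single vertex of $I$ whose only neighbour is $w$. Hence $w\in K$ and some $u\in I$ has $N(u)=\{w\}$. Claw-freeness then restricts $w$: its neighbours in $I$ are pairwise nonadjacent, so $w$ has at most two neighbours in $I$. If $w$ has two such neighbours $u,u'$ and also some neighbour in $K$, then $\{u,u',x\}$ with $x\in K\setminus\{w\}$ gives a claw centred at $w$. Therefore $N_I(w)=\{u\}$, unless $K=\{w\}$; that degenerate case is excluded by $n\ge 2t$ and $n\ge4$, together with maximality of $K$.

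\textbf{Step 2: reduction to an extremal graph.} The aim is to show that $G$ is a spanning subgraph of a graph $H$ that is still claw-free, still has the pendant vertex $u$ attached to $w$, and satisfies $\rho(H)\le\rho(G_{n,t})$. Adding edges increases the spectral radius strictly, since the graph is connected (Perron--Frobenius), so this reduction suffices.

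In $G_{n,1}$ the pendant vertex $u_1$ hangs off $v_1$, and the rest is a clique. For $t\ge2$, $G_{n,t}$ has $t-2$ further vertices $u_i$ of degree $1$ and one vertex $u_t$ of large degree. Let $d_I(v)$ denote the number of neighbours of $v$ in $I$. Claw-freeness at each $v\in K$ forces $d_I(v)\le 2$, and $d_I(v)=2$ is possible only in degenerate situations. So each vertex of $K$ sees at most one vertex of $I$ (in the generic case), and the edges between $K$ and $I$ form a family of disjoint stars centred at vertices of $I$.

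Thus $G$ is determined by a partition of a subset of $K\setminus\{w\}$ into the neighbourhoods $N(x)$, $x\in I\setminus\{u\}$, all nonempty by connectivity. Since the sizes of these neighbourhoods sum to at most $n-t-1$, the task becomes maximising $\rho$ over such distributions. I will use the Kelmans transformation, or the standard ``moving edges toward the vertex with larger Perron entry'' lemma. Let $x,y\in I\setminus\{u\}$ with $x_x\ge x_y$ in the Perron vector. Move all but one of $y$'s neighbours to $x$. This preserves the star structure and hence claw-freeness, keeps $y$ of degree at least $1$, and strictly increases $\rho$. Iterating concentrates all the degree on a single vertex of $I$ and leaves the others as pendant vertices. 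The result is exactly $G_{n,t}$ once we also assume $K$ is fully used, which is monotone: adding edges from $I$ to unused vertices of $K$ only increases $\rho$.

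\textbf{Step 3: equality and the main obstacle.} When $G\not\cong G_{n,t}$, at least one move or addition is strict, so equality forces $G\cong G_{n,t}$.

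The main obstacle is making Step 1's claim that $d_I(v)\le1$ for $v\in K$ fully rigorous. A vertex $v\in K$ with two $I$-neighbours is claw-free only when $K=\{v\}$, so maximality of $K$ and $n\ge 2t$ must exclude that case. I also expect small cases such as $t=1$ or $t=2$, where the labelling in $G_{n,t}$ degenerates, to need a separate direct check. Finally, I must verify that each Kelmans-type move never produces a second cut structure that escapes the family, and that the final graph is exactly $G_{n,t}$ rather than an isomorphic variant with the pendant vertex placed differently.
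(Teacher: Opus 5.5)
\textbf{There is a genuine gap: you never establish that the $I$-neighbourhoods are pairwise disjoint, and the reason you give for it is false.} The rest of the plan is a sound route that differs from the paper's: you reduce via Theorem~\ref{Th1} to a graph with a cut vertex, then shift edges and compare. Your Step~2 edge-shifting and the equality argument work once disjointness is known, but disjointness is exactly the step you flag as the main obstacle.

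You claim that a vertex $v\in K$ with $N_I(v)=\{a,b\}$ is claw-free only when $K=\{v\}$. In fact there is no claw at $v$ exactly when $K\subseteq N(a)\cup N(b)$. For example, take $K=\{v,p,q\}$, $I=\{a,b\}$, $N(a)=\{v,p\}$, $N(b)=\{v,q\}$. This is a connected claw-free split graph with $K$ a maximum clique, $n=5\ge 2|I|$, and $d_I(v)=2$. So maximality of $K$ together with $n\ge 2t$ cannot exclude $d_I(v)=2$. Such graphs can even be Hamiltonian; here $a\,p\,q\,b\,v\,a$ is a Hamiltonian cycle. In your setting, what rules them out is the pendant vertex.

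\textbf{The missing argument is short.} Suppose $N_I(v)=\{a,b\}$.
\begin{itemize}
\item Every $z\in K\setminus\{v\}$ is adjacent to $a$ or $b$, since otherwise $\{a,b,z\}$ is a claw at $v$. Hence $w\in N(a)\cup N(b)$.
\item Since $N_I(w)=\{u\}$, this forces $u\in\{a,b\}$.
\item Then $v\in N(u)=\{w\}$, which contradicts $|N_I(w)|=1$.
\end{itemize}

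\textbf{Step~1 needs a similar repair.} The third leaf of the claw at $w$ must be a vertex $x\in K$ with $x\not\sim u'$. Such an $x$ exists by maximality of $K$, and it is automatically $\neq w$ because $w\sim u'$. An arbitrary $x\in K\setminus\{w\}$ might be adjacent to $u'$ and then gives no claw. With these two fixes your proof is complete, and $t=1,2$ need no separate check.

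\textbf{Comparison with the paper.} The paper does not use Theorem~\ref{Th1}. Instead it takes an extremal non-Hamiltonian graph and proves disjointness (Claim~\ref{C1}) by constructing Hamiltonian cycles explicitly. It then needs an eigen-equation comparison to show that the $I$-vertex of largest degree has the largest Perron entry. Your route gets disjointness almost for free from the cut vertex. It also chooses the vertex that receives the shifted edges directly by Perron entry, so it avoids that degree comparison.
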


An immediate consequence of the previous theorems is as follows.

\begin{Corollary}
Let $G=(K,I)$ be a connected claw-free split graph of order $n$, where $n\ge \max\{4, 2|I|\}$. If
\[
\rho(G)\ge \rho(G_{n,|I|}), 
\]
then $G$ is $2$-connected, unless $G\cong G_{n,|I|}$.
\end{Corollary}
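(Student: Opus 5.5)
The corollary follows from Theorem \ref{N1} together with Theorem \ref{Th1}, so the plan is short.

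Let $G=(K,I)$ be a connected claw-free split graph of order $n\ge \max\{4,2|I|\}$ with $\rho(G)\ge\rho(G_{n,|I|})$, and suppose $G\not\cong G_{n,|I|}$. Theorem \ref{N1} then says that $G$ is Hamiltonian. A Hamiltonian graph on $n\ge 3$ vertices is $2$-connected: deleting any single vertex from a Hamiltonian cycle leaves a Hamiltonian path on the remaining vertices, so $G-v$ is connected for every $v$. Hence $G$ is $2$-connected. The direction of Theorem \ref{Th1} needed here is exactly this elementary one, so the argument amounts to quoting Theorem \ref{N1}.

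No computation is required. The one thing to check is that the exceptional graph is really an exception, i.e.\ that $G_{n,|I|}$ itself is not $2$-connected. For $t=|I|=1$, the vertex $v_1$ is a cut vertex isolating $u_1$. For $t\ge 2$, each $u_i$ with $i\le t-1$ is a pendant vertex attached to $v_i$, so $v_i$ is a cut vertex. This shows the exception cannot be removed and the condition is tight.

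Should Theorem \ref{N1} itself need justification, that is where the real difficulty lies. A direct route would combine Theorem \ref{Th1} with a spectral extremal argument. In a non-$2$-connected claw-free split graph, take a cut vertex $v\in K$, and do edge shifting inside the class of such graphs. This is meant to show that $\rho$ is maximized uniquely by $G_{n,|I|}$, with the claw-free constraint forcing each $u\in I$ to have a bounded neighbourhood pattern. However, the corollary as stated needs none of this.
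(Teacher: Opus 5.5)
Your argument is correct and matches the paper, which states this corollary as an immediate consequence of Theorem~\ref{N1} (which gives Hamiltonicity) together with Theorem~\ref{Th1}; the direction of Theorem~\ref{Th1} used is exactly your elementary observation that a Hamiltonian graph on at least three vertices is $2$-connected. Your extra check that $G_{n,|I|}$ has a cut vertex is not required by the statement, but it correctly confirms that the exceptional graph is a genuine exception.
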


To state the result  for $K_{1,4}$-free split graphs, we need further notation.

First, we define the graphs $\Gamma_{n,t}$ for $1\le t\le \frac{n}{2}$ and $n\ge 5$.
Let $\Gamma_{n,1}=G_{n,1}$. For $t\ge 2$, we define  a split graph $\Gamma_{n,t}=(K,I)$ with $I=\{u_1,\dots, u_t\}$ and $K=\{v_1,\dots, v_{n-t}\}$ as follows.
In  $\Gamma_{n,2}$, the edges between $K$ and $I$ are $u_1v_i$ for all $i=1,\dots,n-3$ and $u_2v_{n-3}$.
In  $\Gamma_{n,3}$, the edges between $K$ and $I$ are $u_iv_j$ for all $i=1,2$ and $j=1,\dots, n-4$ and $u_3v_{n-3}$.
In  $\Gamma_{n,4}$, the edges between $K$ and $I$ are $u_iv_j$ for all $i=1,2$ and  $j=1,\dots, n-5$, $u_3v_k$ for all $k=2,\dots, n-4$, and $u_4v_{n-4}$.
%In $\Gamma_{n,4}'$, the edges between $K$ and $I$  are $u_1v_1$, $u_2v_i$ for all $i=1,\dots, n-5$, $u_3v_j$ for all $j=1,\dots, n-6, n-4$, and $u_4v_k$ for all $k=2,\dots,n-4$.
In $\Gamma_{n, 5}$, the edges between $K$ and $I$ are $u_1v_1$, $u_1v_{n-5}$, $u_iv_j$ for all $i=2,3$ and $j=1,\dots, n-6$, $u_4v_k$ for all $k=2,\dots, n-5$, and $u_5v_{n-5}$.
For  $6\le t\le \lfloor\frac{n}{2}\rfloor$,
in $\Gamma_{n,t}$, the edges between $K$ and $I$  are $u_iv_i$ for all $i=1,\dots,t-5$,
$u_{t-4}v_j$ for all $j=1, \dots, n-t-1$,
$u_{t-3}v_k$ for all $k=1, \dots, n-t-2, n-t$,
$u_{t-2}v_\ell$ for all $\ell=t-4,\dots, n-t$,
$u_{t-1}v_{n-t-1}$ and $u_tv_{n-t}$.
See Fig. \ref{f3} for $\Gamma_{n,t}$ with $t\ge 2$.

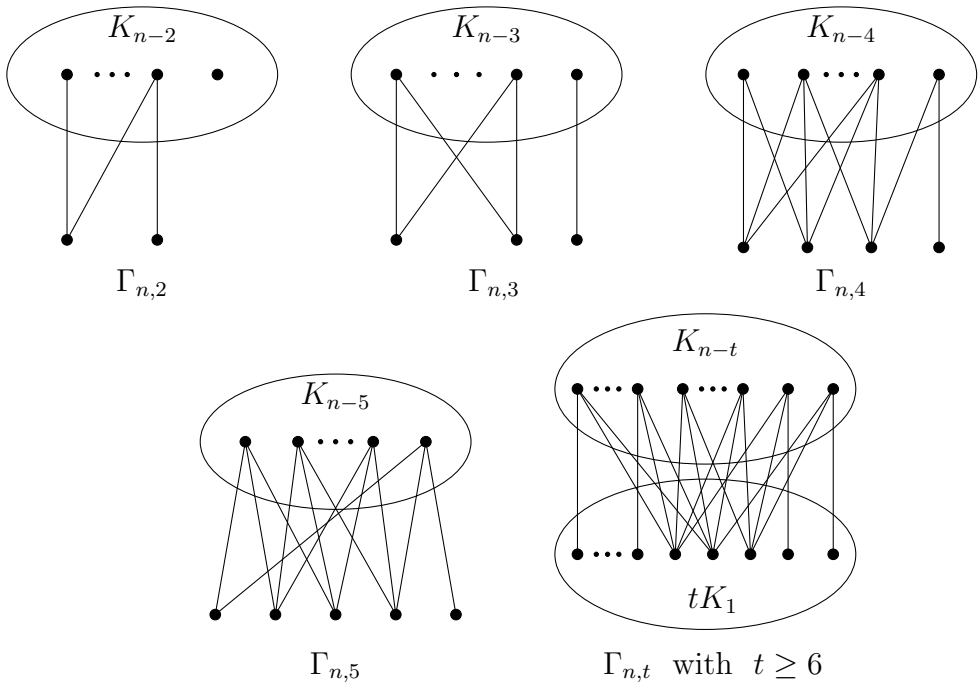
\begin{figure}[htbp]
\centering
\begin{tikzpicture}
\draw (4,-0.8) ellipse (1.8 and 0.9);
\filldraw [black] (3,-0.8) circle (2pt);
\filldraw [black] (3.4,-0.8) circle (0.8pt);
\filldraw [black] (3.6,-0.8) circle (0.8pt);
\filldraw [black] (3.8,-0.8) circle (0.8pt);
\filldraw [black] (4.2,-0.8) circle (2pt);
\filldraw [black] (5,-0.8) circle (2pt);
\draw  [black](3,-3)--(3,-0.8);
\draw  [black](3,-3)--(4.2,-0.8);
\filldraw [black] (3,-3) circle (2pt);
\filldraw [black] (4.2,-3) circle (2pt);
\draw  [black](4.2,-3)--(4.2,-0.8);
\node at (4, -0.2) {$K_{n-2}$};
\node at (4, -3.6) {$\Gamma_{n, 2}$};
\end{tikzpicture} \ \ \ \ \ \
\begin{tikzpicture}
\draw (12,-0.8) ellipse (1.8 and 0.9);
\filldraw [black] (10.8,-0.8) circle (2pt);
\filldraw [black] (11.3,-0.8) circle (0.8pt);
\filldraw [black] (11.6,-0.8) circle (0.8pt);
\filldraw [black] (11.9,-0.8) circle (0.8pt);
\filldraw [black] (12.4,-0.8) circle (2pt);
\filldraw [black] (13.2,-0.8) circle (2pt);
\filldraw [black] (13.2,-3) circle (2pt);
\draw  [black](13.2,-3)--(13.2,-0.8);
\filldraw [black] (12.4,-3) circle (2pt);
\draw  [black](12.4,-3)--(12.4,-0.8);
\draw  [black](12.4,-3)--(10.8,-0.8);
\filldraw [black] (10.8,-3) circle (2pt);
\draw  [black](10.8,-3)--(12.4,-0.8);
\draw  [black](10.8,-3)--(10.8,-0.8);
\node at (12.1, -3.6) {$\Gamma_{n,3}$};
\node at (12, -0.2) {$K_{n-3}$};
\end{tikzpicture}\ \ \ \ \ \ \ \
\begin{tikzpicture}
\draw (7,-0.8) ellipse (1.8 and 0.9);
\filldraw [black] (5.7,-0.8) circle (2pt);
\filldraw [black] (6.5,-0.8) circle (2pt);
\filldraw [black] (6.8,-0.8) circle (0.8pt);
\filldraw [black] (7.0,-0.8) circle (0.8pt);
\filldraw [black] (7.2,-0.8) circle (0.8pt);
\filldraw [black] (7.5,-0.8) circle (2pt);
\filldraw [black] (8.3,-0.8) circle (2pt);
\filldraw [black] (5.7,-3.1) circle (2pt);
\draw  [black](5.7,-3.1)--(5.7,-0.8);
\draw  [black](5.7,-3.1)--(6.5,-0.8);
\draw  [black](5.7,-3.1)--(7.5,-0.8);
\filldraw [black] (6.55,-3.1) circle (2pt);
\draw  [black](6.55,-3.1)--(5.7,-0.8);
\draw  [black](6.55,-3.1)--(6.5,-0.8);
\draw  [black](6.55,-3.1)--(7.5,-0.8);
\filldraw [black] (7.4,-3.1) circle (2pt);
\draw  [black](7.4,-3.1)--(6.5,-0.8);
\draw  [black](7.4,-3.1)--(7.5,-0.8);
\draw  [black](7.4,-3.1)--(8.3,-0.8);
\filldraw [black] (8.3,-3.1) circle (2pt);
\draw  [black](8.3,-3.1)--(8.3,-0.8);
\node at (7, -0.2) {$K_{n-4}$};
\node at (7, -3.6) {$\Gamma_{n,4}$};
\end{tikzpicture}\ \ \ \ \ \ \ \

\begin{tikzpicture}
\draw (8,-0.8) ellipse (1.8 and 0.9);
\filldraw [black] (6.8,-0.8) circle (2pt);
\filldraw [black] (7.5,-0.8) circle (2pt);
\filldraw [black] (7.8,-0.8) circle (0.8pt);
\filldraw [black] (8.0,-0.8) circle (0.8pt);
\filldraw [black] (8.2,-0.8) circle (0.8pt);
\filldraw [black] (8.5,-0.8) circle (2pt);
\filldraw [black] (9.2,-0.8) circle (2pt);
%\draw (8,-0.8) ellipse (1.8 and 0.9);
%\filldraw [black] (6.5,-0.8) circle (2pt);
%\filldraw [black] (7.1,-0.8) circle (2pt);
%\filldraw [black] (7.5,-0.8) circle (0.8pt);
%\filldraw [black] (7.7,-0.8) circle (0.8pt);
%\filldraw [black] (7.9,-0.8) circle (0.8pt);
%\filldraw [black] (8.1,-0.8) circle (0.8pt);
%\filldraw [black] (8.3,-0.8) circle (0.8pt);
%\filldraw [black] (8.5,-0.8) circle (0.8pt);
%\filldraw [black] (8.9,-0.8) circle (2pt);
%\filldraw [black] (9.5,-0.8) circle (2pt);
\filldraw [black] (6.4,-3.1) circle (2pt);
\draw  [black](6.4,-3.1)--(6.8,-0.8);
\draw  [black](6.4,-3.1)--(9.2,-0.8);
\filldraw [black] (7.2,-3.1) circle (2pt);
\draw  [black](7.2,-3.1)--(6.8,-0.8);
\draw  [black](7.2,-3.1)--(7.5,-0.8);
\draw  [black](7.2,-3.1)--(8.5,-0.8);
\filldraw [black] (8.0,-3.1) circle (2pt);
\draw  [black](8.0,-3.1)--(6.8,-0.8);
\draw  [black](8.0,-3.1)--(7.5,-0.8);
\draw  [black](8.0,-3.1)--(8.5,-0.8);
\filldraw [black] (8.8,-3.1) circle (2pt);
\draw  [black](8.8,-3.1)--(7.5,-0.8);
\draw  [black](8.8,-3.1)--(8.5,-0.8);
\draw  [black](8.8,-3.1)--(9.2,-0.8);
\filldraw [black] (9.6,-3.1) circle (2pt);
\draw  [black](9.6,-3.1)--(9.2,-0.8);
\node at (8, -0.2) {$K_{n-5}$};
\node at (8, -3.8) {$\Gamma_{n, 5}$};
\end{tikzpicture}\ \ \ \ \ \ \ \
\begin{tikzpicture}
\draw (12,-0.8) ellipse (2 and 1);
\filldraw [black] (10.3,-0.8) circle (2pt);
\filldraw [black] (10.55,-0.8) circle (0.8pt);
\filldraw [black] (10.7,-0.8) circle (0.8pt);
\filldraw [black] (10.85,-0.8) circle (0.8pt);
\filldraw [black] (11.1,-0.8) circle (2pt);
\filldraw [black] (11.7,-0.8) circle (2pt);
\filldraw [black] (11.95,-0.8) circle (0.8pt);
\filldraw [black] (12.1,-0.8) circle (0.8pt);
\filldraw [black] (12.25,-0.8) circle (0.8pt);
\filldraw [black] (12.5,-0.8) circle (2pt);
\filldraw [black] (13.1,-0.8) circle (2pt);
\filldraw [black] (13.7,-0.8) circle (2pt);

\filldraw [black] (10.3,-3) circle (2pt);
\draw  [black](10.3,-3)--(10.3,-0.8);
\filldraw [black] (10.55,-3) circle (0.8pt);
\filldraw [black] (10.7,-3) circle (0.8pt);
\filldraw [black] (10.85,-3) circle (0.8pt);
\filldraw [black] (11.1,-3) circle (2pt);
\draw  [black](11.1,-3)--(11.1,-0.8);
\filldraw [black] (11.6,-3) circle (2pt);
\draw  [black](11.6,-3)--(10.3,-0.8);
\draw  [black](11.6,-3)--(11.1,-0.8);
\draw  [black](11.6,-3)--(11.7,-0.8);
\draw  [black](11.6,-3)--(12.5,-0.8);
\draw  [black](11.6,-3)--(13.1,-0.8);
\filldraw [black] (12.1,-3) circle (2pt);
\draw  [black](12.1,-3)--(10.3,-0.8);
\draw  [black](12.1,-3)--(11.1,-0.8);
\draw  [black](12.1,-3)--(11.7,-0.8);
\draw  [black](12.1,-3)--(12.5,-0.8);
\draw  [black](12.1,-3)--(13.7,-0.8);
\filldraw [black] (12.6,-3) circle (2pt);
\draw  [black](12.6,-3)--(11.7,-0.8);
\draw  [black](12.6,-3)--(12.5,-0.8);
\draw  [black](12.6,-3)--(13.1,-0.8);
\draw  [black](12.6,-3)--(13.7,-0.8);
\filldraw [black] (13.1,-3) circle (2pt);
\draw  [black](13.1,-3)--(13.1,-0.8);
\filldraw [black] (13.7,-3) circle (2pt);
\draw  [black](13.7,-3)--(13.7,-0.8);
\draw (12,-3) ellipse (2 and 1);
\node at (12.1, -3.6) {$tK_1$};
\node at (12, -0.2) {$K_{n-t}$};
\node at (12.1, -4.5) {$\Gamma_{n,t}$ \text{ with } $t\ge 6$};
\end{tikzpicture}
\caption{Graphs $\Gamma_{n,t}$ \text{ with } $t\ge 2$.}
\label{f3}
\end{figure}

Next, let $\Gamma_{n,2}'$ and $\Gamma_{n, 3}'$ be the $n$-vertex graphs $(K,I)$ with $|I|=2,3\le \frac{n}{2}$, given in Fig. \ref{fth}, where $\Gamma_{n, 2}'$ is the graph obtained from $K_{n-2}\cup 2K_1$ with
$V(2K_1)=\{u_1,u_2\}$ and $V(K_{n-2})=\{v_1,\dots, v_{n-2}\}$
by adding edges $u_iv_j$ for all $i,j=1,2$,
and $\Gamma_{n, 3}'$ is the graph obtained from $K_{n-3}\cup 3K_1$ with
$V(3K_1)=\{u_1,u_2,u_3\}$ and $V(K_{n-3})=\{v_1,\dots, v_{n-3}\}$
by adding edges $u_iv_j$ for all $i,j=1,2$ and $u_3v_k$ for all $k=2,\dots,n-3$.
%and  $\Gamma_{n, 5}'$ is  the graph obtained from $K_{n-5}\cup 5K_1$
%with $V(5K_1)=\{u_1,\dots,u_5\}$ and $V(K_{n-5})=\{v_1,\dots, v_{n-5}\}$
%by adding edges $u_1v_1$, $u_1v_{n-5}$, $u_iv_j$ for all $i=2,3$ and $j=1,\dots, n-6$, $u_4v_k$ for all $k=2,\dots, n-5$, and $u_5v_{n-5}$.

\begin{figure}[htbp]
\centering
\begin{tikzpicture}
\draw (2,-0.8) ellipse (2 and 1);
\filldraw [black] (0.8,-0.8) circle (2pt);
\filldraw [black] (1.6,-0.8) circle (2pt);
\filldraw [black] (2.2,-0.8) circle (2pt);
\filldraw [black] (2.5,-0.8) circle (0.8pt);
\filldraw [black] (2.7,-0.8) circle (0.8pt);
\filldraw [black] (2.9,-0.8) circle (0.8pt);
\filldraw [black] (3.2,-0.8) circle (2pt);
\draw  [black](0.8,-3)--(0.8,-0.8);
\draw  [black](0.8,-3)--(1.6,-0.8);
\filldraw [black] (0.8,-3) circle (2pt);
\filldraw [black] (1.6,-3) circle (2pt);
\draw  [black](1.6,-3)--(0.8,-0.8);
\draw  [black](1.6,-3)--(1.6,-0.8);
\node at (2, -0.2) {$K_{n-2}$};
\node at (2, -3.8) {$\Gamma_{n,2}'$};
\end{tikzpicture} \ \ \ \ \ \ \ \ \
\begin{tikzpicture}\draw (2,-0.8) ellipse (2 and 1);
\filldraw [black] (0.5,-0.8) circle (2pt);
\filldraw [black] (1.5,-0.8) circle (2pt);
\filldraw [black] (2.5,-0.8) circle (2pt);
\filldraw [black] (2.8,-0.8) circle (0.8pt);
\filldraw [black] (3.0,-0.8) circle (0.8pt);
\filldraw [black] (3.2,-0.8) circle (0.8pt);
\filldraw [black] (3.5,-0.8) circle (2pt);
\filldraw [black] (0.5,-3.1) circle (2pt);
\draw  [black](0.5,-3.1)--(0.5,-0.8);
\draw  [black](0.5,-3.1)--(1.5,-0.8);
\filldraw [black] (1.5,-3.1) circle (2pt);
\draw  [black](1.5,-3.1)--(0.5,-0.8);
\draw  [black](1.5,-3.1)--(1.5,-0.8);
\filldraw [black] (2.5,-3.1) circle (2pt);
\draw  [black](2.5,-3.1)--(1.5,-0.8);
\draw  [black](2.5,-3.1)--(2.5,-0.8);
\draw  [black](2.5,-3.1)--(3.5,-0.8);
\node at (2, -0.2) {$K_{n-3}$};
\node at (2, -3.8) {$\Gamma_{n,3}'$};
\end{tikzpicture}\ \ \ \  \ \
\caption{Graphs $\Gamma_{n,2}', \Gamma_{n, 3}'$.}
\label{fth}
\end{figure}
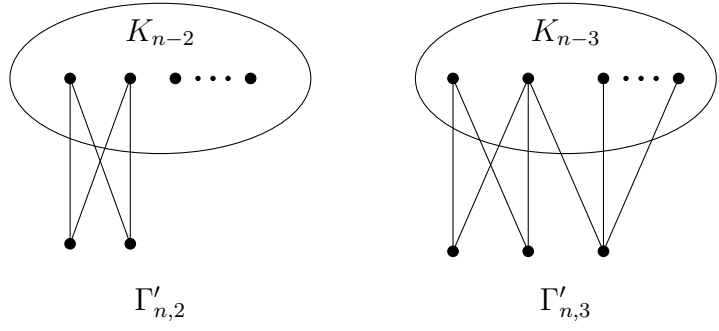

\begin{Theorem}\label{N2}
Let $G=(K,I)$ be a connected $K_{1,4}$-free split graph of order $n\ge \max\{5,2|I|\}$. If
\[
\rho(G)\ge\begin{cases}
\rho(\Gamma_{n,2}') & \mbox {if~} |I|=2, n=5\\
%\rho(\Gamma_{n,2}) & \mbox {if~} |I|=2, n\ge6\\
\rho(\Gamma_{n,3}') & \mbox {if~} |I|=3, n=6\\
%\rho(\Gamma_{n,5}) & \mbox {if~} |I|=5 \\
\rho(\Gamma_{n,|I|}) & \mbox {otherwise}
\end{cases}
\]
then $G$ is Hamiltonian, unless
\[
G\cong \begin{cases}
\Gamma_{n,2}' & \mbox {if~} |I|=2, n=5,\\
%\Gamma_{n,2} & \mbox {if~} |I|=2, n\ge6,\\
\Gamma_{n,3}' & \mbox {if~} |I|=3, n=6,\\
%\Gamma_{n,5} & \mbox {if~} |I|=5, \\
\Gamma_{n,|I|} & \mbox {otherwise}.
\end{cases}
\]
\end{Theorem}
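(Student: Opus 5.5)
Let $t=|I|$. The plan is to reduce to 3-connectivity via Theorem \ref{Th2}, and then use spectral extremality among non-3-connected $K_{1,4}$-free split graphs with parts of sizes $n-t$ and $t$.

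Suppose $G=(K,I)$ is connected, $K_{1,4}$-free, $n\ge\max\{5,2t\}$, and not Hamiltonian. By Theorem \ref{Th2}, $G$ is not $3$-connected. Since $K$ is a maximum clique, each $u\in I$ misses some vertex of $K$. The key structural observation concerns a vertex $v\in K$: its neighbours in $I$ together with at most one vertex of $K$ must not contain $4$ pairwise nonadjacent vertices. Hence each $v\in K$ has at most $2$ neighbours in $I$ when $v$ has a non-neighbour in $K$, and in general $|N(v)\cap I|\le 3$. Thus $e(K,I)\le 3(n-t)$. Not being $3$-connected gives a cut set $S$ with $|S|\le 2$. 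In a split graph, such a cut separates some vertex $u\in I$ from the rest, or separates a set $I'\subseteq I$ whose neighbourhood has size at most $2$. I would reduce to the case where some $u\in I$ has $d(u)\le 2$, or two vertices $u,u'\in I$ have a common neighbourhood of size $2$; the latter is the $\Gamma'$ situation. For $t=1$, non-Hamiltonicity forces $d(u_1)=1$, and $G=\Gamma_{n,1}$ is the only such graph, so we are done.

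Next I would set up the extremal problem: among all $K_{1,4}$-free split graphs with the given $n,t$ that have such a $\le 2$ cut, maximize $\rho$. I would use Perron-vector edge-shifting (Kelmans-type) operations. If $x_v\ge x_w$ for $v,w\in K$, moving $I$-edges from $w$ to $v$ increases $\rho$. Such moves are allowed as long as the $K_{1,4}$-freeness bound of at most $3$ (or $2$) $I$-neighbours per $K$-vertex is preserved and the cut structure is kept. Saturating the $I$-degrees under these constraints should yield exactly the edge pattern of $\Gamma_{n,t}$: in the generic case, two $I$-vertices share almost all of $K$, a third covers many, and the remaining ones are pendant-like. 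Small cases $t\le 5$ have to be treated separately because the constraints interact differently there. We must also check that the extremal graph is non-Hamiltonian (easy: a degree-one vertex, or a $2$-cut leaving three components), so the bound is tight.

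The main obstacle will be the comparisons between competing candidate extremal configurations. Examples are $\Gamma_{n,2}$ versus $\Gamma'_{n,2}$, and $\Gamma_{n,3}$ versus $\Gamma'_{n,3}$; in each pair the winner flips at $n=5$ and $n=6$, respectively. For these I would compare characteristic polynomials via equitable quotient matrices of size at most about $6$. One shows $\phi_{\Gamma}(x)-\phi_{\Gamma'}(x)$ has a fixed sign for $x\ge n-t-1$, which is a lower bound on both spectral radii since both graphs contain $K_{n-t}$. The small $n$ are checked directly. Finally, strictness of the shifting operations gives the uniqueness ``unless $G\cong\cdots$''.
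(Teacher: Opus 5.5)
There is a genuine gap. It lies in replacing ``non-Hamiltonian'' by ``not $3$-connected''. Theorem \ref{Th2} gives only the implication non-Hamiltonian $\Rightarrow$ not $3$-connected. You then propose to maximize $\rho$ over all $K_{1,4}$-free split graphs of type $(K,I)$ that have a cut of size at most $2$. That class contains Hamiltonian graphs whose spectral radius is strictly larger than $\rho(\Gamma_{n,t})$, so its maximizer is not $\Gamma_{n,t}$.

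Two examples show this:
\begin{itemize}
\item For $t=2$, $n\ge 6$: in $\Gamma_{n,2}$ the vertex $u_1$ misses only $v_{n-2}$. The graph $\Gamma_{n,2}+u_2v_{n-2}$ is $K_{1,4}$-free (automatic when $|I|=2$) and has $d(u_2)=2$. Its spectral radius is larger by Lemma \ref{addedges}. It is Hamiltonian: take $u_2v_{n-2}$, then $v_{n-4},\dots,v_1$ in decreasing order, then $u_1v_{n-3}u_2$.
\item For $t=3$, $n\ge 7$: the graph $\Gamma_{n,3}+u_3v_1$ is still $K_{1,4}$-free, since every vertex of $K\setminus\{v_1\}$ is adjacent to $u_1$ or $u_3$. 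It has $d(u_3)=2$. It contains the Hamiltonian cycle $u_3v_{n-3}v_{n-4}u_1v_{n-5}u_2$, followed by $v_{n-6},\dots,v_1$ in decreasing order, and back to $u_3$.
\end{itemize}
So Kelmans-type moves constrained only by $K_{1,4}$-freeness and ``keeping the cut'' will push past $\Gamma_{n,t}$ into Hamiltonian graphs. Checking at the end that the output is non-Hamiltonian cannot repair this, because the output of your extremal problem is not $\Gamma_{n,t}$.

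What the argument needs is an extremal problem posed directly over non-Hamiltonian graphs, with a certificate of non-Hamiltonicity that survives every edge move. The paper maximizes $\rho$ over $\mathbb{G}_n$, the connected, $K_{1,4}$-free, non-Hamiltonian split graphs of type $(K,I)$. At each shift it verifies that the new graph stays in $\mathbb{G}_n$, typically because some vertex of $I$ has, or is made to have, degree $1$.

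The key device is Lemma \ref{DI1}: if $|I|\ge 4$ and some vertex of $K$ has three neighbours in $I$, then the extremal graph has a pendant vertex in $I$. Theorem \ref{Th2} is used only inside that proof, to produce an $I$-vertex of degree $2$ whose edges are then shifted away.

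The case analysis splits on whether some $v\in K$ has $d_I(v)=3$. In that case $N_I(v)$ must dominate $K\setminus\{v\}$. (Your remark about $v$ having ``a non-neighbour in $K$'' is vacuous, since $K$ is a clique.) The analysis leaves a list of near-extremal survivors: $\Gamma_{n,2}'$, $\Gamma_{n,2}''$, $\Gamma_{n,3}'$, $\Gamma_{n,3}''$, $\Gamma_{n,4}'$, $\Gamma_{n,5}'$, $\Gamma_{n,5}''$, $\Gamma^{*}_{n,|I|}$, $\Gamma^{**}_{n,|I|}$. Each must be compared with $\Gamma_{n,|I|}$ by quotient matrices or Perron-vector estimates. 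This is considerably more than the two small-$n$ flips you anticipate.
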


Very recently, Zhu et al. \cite{ZFL}  gave a spectral  condition to imply
a connected split graph  $(K,I)$ with certain constraints on
$|K|$ and $|I|$ is  Hamiltonian. For positive integers $p$ and $q$, let  $S(1, p,1,q)$ be the split graph $G=(K,I)$ with $|K|=1+p$ and $|I|=1+q$ in which the degree of one vertex, say $v$ of $I$ is one,  and $G-v$ is a complete split graph.

\begin{Theorem}
Let $G =(K,I)$ be a connected split graph with $|K|\ge \frac{3|I|}{2}+\frac{17}{2}$ and $|I|\ge 3$. If
$\rho(G)\ge \rho(S(1, |K|-1,1,|I|-1))$,  then $G$ is Hamiltonian, unless $G\cong S(1, |K|-1,1,|I|-1)$.
\end{Theorem}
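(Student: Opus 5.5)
This result is quoted from Zhu et al.\ \cite{ZFL} and is not proved in this paper. If I had to prove it, I would use the standard stability method for spectral Hamiltonicity conditions, adapted to the split structure.

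Write $k=|K|$, $t=|I|$, $n=k+t$, and let $G=(K,I)$ be non-Hamiltonian with $\rho(G)\ge\rho(S(1,k-1,1,t-1))$.

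\textbf{Step 1: a lower bound on $\rho$ and an upper bound on $e(G)$.} Since $S(1,k-1,1,t-1)$ contains $K_k$ together with a nearly complete bipartite part between $K$ and $I$, we have $\rho(S(1,k-1,1,t-1))>k-1$. I would obtain a sharper lower bound by taking the quotient matrix of the equitable partition $\{v\},\,I\setminus\{v\},\,\{$the neighbour of $v\},\,K\setminus\{$that neighbour$\}$.

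For the upper side I would use Hong's bound $\rho\le\sqrt{2e-n+1}$. Alternatively, since $K$ is a clique, I would use a bound of the form $\rho^2\le \rho\,(k-1)+e(K,I)$, which follows from the eigenvector equations restricted to $K$. Either way, $\rho(G)\ge\rho(S)$ forces
\[
e(K,I)\ge kt-k-t+2-c
\]
for some small constant $c$. So at most $O(k)$ edges between $K$ and $I$ are missing.

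\textbf{Step 2: a closure argument.} I would use the split-graph analogue of Chvátal/Bondy--Chvátal closure. Because $I$ is independent, Hamiltonicity is equivalent to the existence of a system of $t$ vertex-disjoint paths covering $K$ that link the vertices of $I$ cyclically. By a Hall-type (or Chvátal-type) degree-sequence condition on the bipartite graph between $I$ and $K$, $G$ is Hamiltonian unless one of the following holds:
\begin{itemize}
\item some vertex of $I$ has degree $1$; or
\item some set $S\subseteq I$ with $|S|\ge2$ has $|N(S)|\le|S|$.
\end{itemize}
The hypothesis $k\ge \frac{3t}{2}+\frac{17}{2}$ is what makes the edge count from Step 1 incompatible with every bad configuration except the first one. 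Indeed, if $|N(S)|\le|S|$ with $|S|=s\ge2$, then at least $s(k-s)$ edges are missing. This exceeds the budget $k+t$ when $2\le s\le t$, provided $k$ is large relative to $t$; the constant $17/2$ should come from this inequality together with the slack $c$.

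\textbf{Step 3: the degree-one case.} If some $v\in I$ has degree $1$, then $G$ is a spanning subgraph of $S(1,k-1,1,t-1)$. By Perron--Frobenius monotonicity, $\rho(G)\le\rho(S)$, with equality only if $G\cong S$.

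\textbf{Main obstacle.} I expect the hardest part to be the precise comparison in Step 2. The edge bound derived from the spectral radius has to be strong enough to rule out every "Hall-deficient" configuration with $s\ge2$. This needs a careful estimate of $\rho(S(1,k-1,1,t-1))$ via its quotient cubic, compared against $\rho$ of the extremal graphs having a deficient set of size $2$, namely two vertices of $I$ sharing the same two neighbours. Failing a clean edge-count argument, I would handle these graphs directly by computing their quotient matrices and showing their spectral radius is below $\rho(S)$ when $k\ge \frac{3t}{2}+\frac{17}{2}$.
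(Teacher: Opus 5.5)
The paper only quotes this result from Zhu et al.\ and gives no proof, so your outline has to stand on its own. The step that fails is the dichotomy in Step 2. No Hall-type or Chv\'atal-type theorem says that a split graph is Hamiltonian once every vertex of $I$ has degree at least $2$ and $|N(S)|\ge |S|+1$ for every $S\subseteq I$ with $|S|\ge 2$. Here is a counterexample: take $I=\{u_1,u_2,u_3\}$, $K=\{w,a_1,a_2,a_3,z_1,\dots,z_9\}$ (so $|K|=13=\frac{3\cdot 3}{2}+\frac{17}{2}$), and $N(u_i)=\{w,a_i\}$. No vertex of $I$ has degree $1$, and $|N(S)|=|S|+1$ for all $S\subseteq I$ with $|S|\ge 2$. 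Yet $G$ is not Hamiltonian: a Hamiltonian cycle must use both edges at each $u_i$, so $w$ would lie on three cycle edges. Your condition is essentially $1$-toughness, which does not suffice for split graphs (Hamiltonicity is NP-complete on split graphs, while your condition can be tested by matchings). So your list of obstructions is incomplete. The edge count you then perform only removes the Hall-deficient configurations; obstructions such as a vertex of $K$ forced onto three cycle edges are never addressed.

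The fix is to let the density prove Hamiltonicity directly. For that, Step 1 must use your second bound, not Hong's. Hong's bound is too lossy: $2e(S)-n+1-\rho(S)^2$ is roughly $(t-1)(t-2)\frac{k-1}{k+t-2}$, so it allows order $t^2$ extra missing edges. It cannot even exclude two degree-$2$ vertices of $I$ with equal neighbourhoods unless $k$ is quadratic in $t$. The bound $\rho^2-(k-1)\rho\le e(K,I)$ comes from summing the eigen-equations over $K$ and using $\rho x_u\le \sum_{z\in K}x_z$ for $u\in I$, and it is essentially exact for $S$. The same computation gives $\rho(S)^2-(k-1)\rho(S)=k(t-1)+x_w/\sum_{z\in K}x_z$, where $w$ is the neighbour of the pendant vertex and the last term lies in $(0,1)$. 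Hence $\rho(G)\ge\rho(S)$ forces $e(K,I)\ge k(t-1)+1$, i.e.\ at most $k-1$ edges between $K$ and $I$ are missing.

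Now apply Chv\'atal's theorem to the degree sequence of $G$ itself. Suppose $d_i\le i<n/2$. Every vertex of $K$ has degree at least $k-1\ge n/2$ (as $k\ge t+2$), so the $i$ vertices of smallest degree lie in $I$. Together they miss at least $i(k-i)>k-1$ edges whenever $2\le i\le t$, a contradiction. So the only possible violation is a vertex of degree $1$ in $I$. That vertex alone misses $k-1$ edges, so every other vertex of $I$ is complete to $K$ and $G\cong S$, which is your Step 3. This argument uses only $k\ge t+2$, so you do not need to recover the constant $\frac{17}{2}$ from an edge-count inequality.
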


Note that $S(1, |K|-1,1,|I|-1)$ contains  an induced $K_{1, |I|}$. So the result is different from ours.

\section{Preliminaries}

For a graph $G$ with $u\in V(G)$, we denote by $N_G(u)$ and $d_G(u)$ the neighborhood and the degree of $u$ in $G$, respectively. For $S\subset V(G)$,
let $G-S$ be the graph obtained from $G$ be removing the vertices in $S$ (and every edge incident to some vertex of $S$).
Let $E_1\subset E(K_n)\setminus E(G)$. Then denoted by $G+E_1$ the graph obtained from $G$ by adding all edges in $E_1$, and if $E_1=\{uv\}$, then we write $G+uv$ for $G+\{uv\}$.
And for $E_2\subset E(G)$, let $G-E_2$ be the spanning subgraph from $G$ by deleting all edges in $E_2$, and if $E_2=\{uv\}$, then we write $G-uv$ for $G-\{uv\}$.

%A non-complete graph $G$ is $t$-tough if $|S|\ge tc(G-S)$ for every cut set $S\subset V(G)$, where $c(G-S)$ is the number of components of $G-S$.
%The toughness $\tau(G)$ of $G$ is the maximum $t$ for which $G$ is $t$-tough. Let $\tau(K_n)=\infty$.
%
%\begin{Lemma}\cite{KLM}\label{TH}
%Every $\frac{3}{2}$-tough split graph is Hamiltonian.
%\end{Lemma}

%Let $G$ be a connected $K_{1,4}$-free split graph of type $(K,I)$. Let
%$I'=\{u\in I: d_G(u)=2\}$ and $K'=\cup_{u\in I'}N_G(u)$. If $I'\ne \emptyset$, then
%let $H$ be the bipartite subgraph of $G$ with $V(H)=I'\cup K'$ and $E(H)=\{uv\in E(G) : u\in I', v\in K'\}$. Let $C$ be an induced cycle in $H$ such that $V(K)\setminus V(C)\ne\emptyset$. We refer to $C$ in $H$ as a short cycle in $G$, if such a cycle exists.
%
%
%\begin{Lemma} \cite{RS} \label{Hsnd}
%Let $G$ be a $2$-connected $K_{1,4}$-free split graph with $|K|\ge|I|\ge8$. Then $G$ is Hamiltonian
%if and only if there are no induced short cycles in $G$.
%\end{Lemma}

The following lemma is an immediate consequence of the Perron-Frobenius theorem.

\begin{Lemma}\label{addedges}
Let $G$ be a  graph and $u$ and $v$  two nonadjacent vertices of $G$. If $G+uv$ is connected,  then $\rho(G+uv)> \rho(G)$.
\end{Lemma}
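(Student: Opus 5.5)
The plan is to derive Lemma \ref{addedges} from the Perron--Frobenius theorem applied to $A(G+uv)$, using the Rayleigh quotient characterization of the spectral radius of a real symmetric matrix. Write $H=G+uv$. Since $H$ is connected, $A(H)$ is a nonnegative irreducible symmetric matrix. Perron--Frobenius then gives a positive unit eigenvector $x$ (the Perron vector) with $A(H)x=\rho(H)x$, and $x_w>0$ for every vertex $w$.

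For $G$ itself, which may be disconnected, I only use that $\rho(G)$ is the largest eigenvalue of the symmetric nonnegative matrix $A(G)$. Let $y$ be a unit eigenvector for $\rho(G)$. By Perron--Frobenius for nonnegative matrices, not necessarily irreducible, we may take $y\ge 0$; alternatively, replace $y$ by $|y|$, since $|y|^TA(G)|y|\ge y^TA(G)y$.

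The key computation is then the Rayleigh quotient chain
\[
\rho(H)\ \ge\ y^TA(H)y\ =\ y^TA(G)y+2y_uy_v\ =\ \rho(G)+2y_uy_v\ \ge\ \rho(G).
\]
This already gives the weak inequality. Strictness is the only delicate point, since $y_uy_v$ could vanish when $G$ is disconnected.

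To get strictness, suppose $\rho(H)=\rho(G)$. Then equality holds throughout, so $y$ maximizes the Rayleigh quotient of $A(H)$. Hence $y$ is an eigenvector of $A(H)$ for $\rho(H)$. Because $H$ is connected, the $\rho(H)$-eigenspace is one-dimensional and spanned by the positive vector $x$, so $y=x>0$. In particular $y_uy_v>0$, which contradicts the equality $2y_uy_v=0$ forced by the chain. Therefore $\rho(G+uv)>\rho(G)$.

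The only point needing care is the use of simplicity of the Perron root for the irreducible matrix $A(H)$. It is exactly here that the connectivity hypothesis on $G+uv$ enters.
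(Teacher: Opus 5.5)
Your argument is correct. The Rayleigh-quotient chain gives $\rho(G+uv)\ge\rho(G)+2y_uy_v$, and you rule out the equality case correctly: in that case the nonnegative unit vector $y$ would have to be a Perron eigenvector of the connected graph $G+uv$, hence positive, contradicting $y_uy_v=0$. The paper gives no details beyond calling the lemma an immediate consequence of the Perron--Frobenius theorem (strict monotonicity of the spectral radius for irreducible nonnegative matrices), and your proof is essentially a self-contained derivation of that fact in the symmetric case.
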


If $G$ is a connected graph, then $A(G)$ is irreducible, so by Perron-Frobenius theorem, there exists a unique unit positive eigenvector $\mathbf{x}$ of $A(G)$ corresponding to $\rho(G)$, which we call the Perron vector of $G$. For $u\in V(G)$, denote by $x_u$ the entry of $\mathbf{x}$ at $u$.

\begin{Lemma}\label{perron} \cite{BS}
Let $G$ be a connected graph with $\{u,v,w\}\subseteq V(G)$ such that $uw\notin E(G)$ and $vw\in E(G)$. If $\mathbf{x}$ is the Perron vector of $G$ with $x_u\ge x_v$, then $\rho(G-vw+uw)>\rho(G)$.
\end{Lemma}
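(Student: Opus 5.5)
The plan is to compare the Rayleigh quotients of the two adjacency matrices at the Perron vector $\mathbf{x}$ of $G$, and then rule out equality using the eigen-equation at $u$. Put $G'=G-vw+uw$, and write $A=A(G)$ and $A'=A(G')$.

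First, the vertices are distinct. Indeed $u\ne v$, since $uw\notin E(G)$ while $vw\in E(G)$. Also $u\ne w$ and $v\ne w$, because $G$ has no loops. Hence $G'$ is a simple graph on the same vertex set, and $A'$ is real symmetric. For a real symmetric matrix, $\rho(A')$ is at least the largest eigenvalue, so the Rayleigh principle gives $\rho(G')\ge \max_{\|\mathbf{y}\|=1}\mathbf{y}^TA'\mathbf{y}$. This holds whether or not $G'$ is connected, so connectivity of $G'$ is never needed.

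Next, I compute the change in the quadratic form. Since $A'$ differs from $A$ only in the entries for the pairs $\{v,w\}$ (removed) and $\{u,w\}$ (added),
\[
\mathbf{x}^TA'\mathbf{x}-\mathbf{x}^TA\mathbf{x}=2x_ux_w-2x_vx_w=2x_w(x_u-x_v)\ge 0 .
\]
Combining this with the Rayleigh principle,
\[
\rho(G')\ge \mathbf{x}^TA'\mathbf{x}\ge \mathbf{x}^TA\mathbf{x}=\rho(G).
\]

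The main step is to make the inequality strict. Suppose $\rho(G')=\rho(G)$. Then $\mathbf{x}$ attains the maximum of the Rayleigh quotient of $A'$, so it is an eigenvector of $A'$: $A'\mathbf{x}=\rho(G)\mathbf{x}$. Now compare the eigen-equations of $A$ and $A'$ at the vertex $u$. Since $u\notin\{v,w\}$, the neighbourhood of $u$ changes only by gaining $w$, so
\[
\rho(G)x_u=(A'\mathbf{x})_u=\sum_{z\in N_G(u)}x_z+x_w=\rho(G)x_u+x_w .
\]
This forces $x_w=0$, contradicting the positivity of the Perron vector of the connected graph $G$. Therefore $\rho(G-vw+uw)>\rho(G)$.

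I expect no serious obstacle. The only care needed is to justify the Rayleigh bound without assuming $G'$ is connected, and to check that $u\notin\{v,w\}$ so that the neighbourhood of $u$ changes exactly by the addition of $w$.
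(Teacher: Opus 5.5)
Your proof is correct and is the standard argument for this cited result (the paper gives no proof of its own). You compare quadratic forms via $\mathbf{x}^\top A'\mathbf{x}-\mathbf{x}^\top A\mathbf{x}=2x_w(x_u-x_v)\ge 0$, then rule out equality because $\mathbf{x}$ would satisfy $A'\mathbf{x}=\rho(G)\mathbf{x}$ and the eigen-equation at $u$ would force $x_w=0$; this is exactly the device the paper uses itself, e.g.\ in Claims~\ref{Ione}, \ref{u1u2} and~\ref{NOTE}.

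One small slip: $uw\notin E(G)$ together with looplessness does not by itself give $u\ne w$. Distinctness of $u,v,w$ is implicit in the hypothesis (the operation $+uw$ adds a non-edge of $K_n$), and that is all you need.
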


Note  that if $\mathbf{y}$ is the Perron vector of $G-vw+uw$ in Lemma \ref{perron}, then  $y_u>y_v$ by Lemma \ref{perron}. So Lemma \ref{perron} has the equivalent form:
Let $G$ be a connected graph with $\{u,v\}\subset V(G)$ and  $N_G(v)\setminus N_G[u]\ne \emptyset$.
If $\mathbf{x}$ is the Perron vector of $G$ with $x_u\ge x_v$, then for any nonempty $N\subseteq  N_G(v)\setminus N_G[u]$, $\rho(G-\{vw: w\in N\}+\{uw: w\in N\})>\rho(G)$.

For an $n\times n$ matrix $A$, whose rows and columns are indexed by elements in $X=\{1, \dots, n\}$,
let $\pi=\{X_1, \dots, X_s\}$ be a partition of $X$, and let $A_{ij}$ be the submatrix (block) of $A$ whose rows and columns are indexed by elements of $X_i$ and $X_j$ for $1\le i,j\le s$.
The quotient matrix of $A$ is the $s\times s$ matrix, whose $(i,j)$-entry is the average row sums of $A_{ij}$, where $1\le i,j\le s$.
Moreover, if the row sum of each  $A_{ij}$ is a constant, then we call the partition $\pi$ is equitable.

\begin{Lemma}\label{QM}\cite{BH}
Let $A$ be a real symmetric matrix. Then the spectrum of the quotient matrix $B$ of $A$
corresponding to an equitable partition is contained in the spectrum of $A$.
Furthermore, if $A$ is nonnegative and irreducible, then $\rho(A)=\rho(B)$.
\end{Lemma}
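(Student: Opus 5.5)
Let $\pi=\{X_1,\dots,X_s\}$ be the equitable partition and let $P$ be the $n\times s$ characteristic matrix of $\pi$. That is, $P_{ui}=1$ if $u\in X_i$ and $0$ otherwise. The plan is to translate equitability into the matrix identity $AP=PB$, and then read off both assertions from it.

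\textbf{Step 1: the identity $AP=PB$.} For $u\in X_i$, the $(u,j)$-entry of $AP$ is $\sum_{w\in X_j}a_{uw}$, the row sum of the block $A_{ij}$ at row $u$. By equitability this row sum is the same constant for every $u\in X_i$, and so it equals the average row sum $b_{ij}$. The $(u,j)$-entry of $PB$ is also $b_{ij}$, since row $u$ of $P$ is the $i$-th unit vector. Hence $AP=PB$.

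\textbf{Step 2: spectral containment.} Let $Bv=\lambda v$ with $v\ne 0$ (possibly complex). Then $A(Pv)=PBv=\lambda Pv$. Since the $X_i$ are nonempty and pairwise disjoint, the columns of $P$ are nonzero with disjoint supports, so $P$ has full column rank. Thus $Pv\ne 0$ and $\lambda$ is an eigenvalue of $A$. This gives the first assertion, read as containment of sets of eigenvalues. If multiplicities are wanted, one notes that the column space of $P$ is $A$-invariant and $A$ restricted to it is represented by $B$. Then the characteristic polynomial of $B$ divides that of $A$.

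\textbf{Step 3: equality of spectral radii.} Now let $A$ be nonnegative and irreducible. Then $B$ is nonnegative, being made of averages of row sums of nonnegative blocks. By Step 2, $\rho(B)\le\rho(A)$.

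For the reverse inequality, I will use the weak Perron--Frobenius theorem for nonnegative matrices, which does not need irreducibility. It supplies a nonzero vector $y\ge 0$ with $By=\rho(B)y$. Then $Py$ is a nonzero nonnegative vector with $A(Py)=\rho(B)\,Py$. For an irreducible nonnegative matrix, the only eigenvalue admitting a nonnegative eigenvector is $\rho(A)$. One way to see this is to pair with the positive left Perron vector $z$ of $A$: from $z^{T}A(Py)=\rho(A)z^{T}Py=\rho(B)z^{T}Py$ and $z^{T}Py>0$, we get $\rho(B)=\rho(A)$.

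The only point needing care is this last step, namely invoking Perron--Frobenius for the possibly reducible matrix $B$ and using the positive left eigenvector of $A$. Everything else is routine linear algebra.
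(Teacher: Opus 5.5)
Your proof is correct: equitability gives $AP=PB$, each eigenvector $v$ of $B$ lifts to an eigenvector $Pv\neq 0$ of $A$, and lifting a nonnegative Perron eigenvector of $B$ and pairing it with the positive Perron vector of $A$ forces $\rho(B)=\rho(A)$ (the symmetry of $A$ is not even needed). The paper gives no proof of its own and simply cites Brouwer--Haemers, whose argument is this same characteristic-matrix lifting, so your route is essentially the standard one.
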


\section{Proof of Theorem \ref{N1}}

\begin{proof}[Proof of Theorem \ref{N1}]
Let $G=(K,I)$ be a connected non-Hamiltonian claw-free split graph of order $n$ that maximizes the spectral radius. Assume that $I=\{u_1,\dots, u_{|I|}\}$.
If $|I|=1$, then $d_G(u_1)=1$, so $G\cong G_{n,1}$,  as otherwise $G$ contains a Hamiltonian cycle, which is a contradiction.
Suppose in the following that $|I|\ge 2$.

\begin{Claim}\label{C1}
For any $\{i,j\}\subseteq \{1, \dots, |I|\}$, $N_G(u_i)\cap N_G(u_j)=\emptyset$.
\end{Claim}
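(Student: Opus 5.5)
The claim follows from claw-freeness alone; the extremal choice of $G$ is not needed. I will argue by contradiction. Suppose $v\in N_G(u_i)\cap N_G(u_j)$ for some $i\ne j$. Then $v\in K$, since $I$ is independent and $u_i,u_j$ have no neighbours in $I$. The neighbours $u_i,u_j$ of $v$ are nonadjacent, so it remains to find a third neighbour $w$ of $v$ with $w\notin N_G(u_i)\cup N_G(u_j)$. Then $\{v,u_i,u_j,w\}$ induces $K_{1,3}$, which is a contradiction.

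The main step is producing such a $w$ in $K$. Since $K$ is a clique, every vertex of $K\setminus\{v\}$ is adjacent to $v$. So it suffices to show that $K\setminus\{v\}$ is not entirely contained in $N_G(u_i)\cup N_G(u_j)$. I plan to use the maximality of the clique $K$, which is built into the convention that $G$ is of type $(K,I)$. The vertex $u_i$ is not adjacent to all of $K$, since otherwise $K\cup\{u_i\}$ would be a larger clique. Hence there is $a\in K$ with $au_i\notin E(G)$, and similarly there is $b\in K$ with $bu_j\notin E(G)$.

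\emph{Case 1: some $w\in K$ misses both $u_i$ and $u_j$.} Then $w\ne v$, and $\{v,u_i,u_j,w\}$ is the desired claw.

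\emph{Case 2: every vertex of $K$ is adjacent to $u_i$ or to $u_j$.} Take $a$ and $b$ as above. Then $au_j\in E(G)$ and $bu_i\in E(G)$, and $a\ne b$. In this case the claw should be centred elsewhere, and I expect this to be the main obstacle. If $n-|I|=|K|\ge 3$ and a third vertex $c\in K$ exists, I will look for a claw centred at $a$ or $b$ using other vertices of $I$. If that fails, I will instead use the hypothesis that $G$ is a non-Hamiltonian graph with maximum spectral radius. If the claw-free structure forces Case 2, then either $G$ is Hamiltonian, which is excluded, or adding an edge preserves both claw-freeness and non-Hamiltonicity, so Lemma \ref{addedges} contradicts the maximality of $\rho(G)$. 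Concretely, $u_i,u_j$ together cover $K$ and $|K|\ge|I|$, and I will try to build a Hamiltonian cycle through the clique. I would also check small cases directly, such as $|I|=2$, where $u_i$ and $u_j$ with a common neighbour and covering $K$ give a Hamiltonian cycle whenever each has degree at least $2$.

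In summary, the plan is to use maximality of $K$ to get vertices of $K$ missing $u_i$ and $u_j$, find a claw at $v$ in Case 1, and handle Case 2 by the Hamiltonicity or extremality argument.
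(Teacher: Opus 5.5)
Your Case 1 is correct and matches the paper's first step: a common neighbour $v$ forces $K=N_G(u_i)\cup N_G(u_j)$. However, your opening assertion that the claim follows from claw-freeness alone is false. Case 2, which carries all the content, is left as a list of intentions.

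For instance, take $K=\{v,a,b\}$, $I=\{u_i,u_j\}$, $N_G(u_i)=\{v,b\}$, $N_G(u_j)=\{v,a\}$. This is a connected claw-free split graph of order $5$ with $K$ a maximum clique, and $v$ is a common neighbour. Similarly, for $|I|=3$ the configuration with $N_G(u_3)=K\setminus N$, where $N=N_G(u_i)\cap N_G(u_j)$, is claw-free. So looking for a claw centred at $a$ or $b$ cannot finish the proof; the contradiction must come from exhibiting a Hamiltonian cycle. You are right that the maximality of $\rho(G)$ plays no role, so Lemma~\ref{addedges} is not needed. Only claw-freeness, connectivity, maximality of $K$ and non-Hamiltonicity are used.

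What is missing is the structural reduction that makes such a cycle available. For $|I|=2$ your remark is essentially the paper's argument. Take $a\in N_G(u_j)\setminus N$, $b\in N_G(u_i)\setminus N$, and $P$ a Hamiltonian path of $G[K\setminus\{v\}]$ from $a$ to $b$; then $b\,u_i\,v\,u_j\,a\,P$ is a Hamiltonian cycle.

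For $|I|\ge 3$ you must pin down every other vertex $u_k$:
\begin{itemize}
\item $u_k$ has no neighbour in $N$, since such a neighbour would centre a claw with $u_i,u_j,u_k$.
\item Suppose $u_k$ missed some $w\in N_G(u_i)\setminus N$. Then $u_k$ has no neighbour $z\in N_G(u_j)\setminus N$, else $z$ centres a claw with $w,u_j,u_k$. By connectivity $u_k$ then has a neighbour $y\in N_G(u_i)\setminus N$, and $y$ centres a claw with $u_i,u_k,a$. The same argument applies with $i$ and $j$ swapped.
\item Hence $N_G(u_k)=K\setminus N$ for every $k\ne i,j$. Two such vertices together with $u_i$ would give a claw at $b$, which forces $|I|=3$.
\end{itemize}
Then the cycle runs from $u_k$ through a Hamiltonian path of $G[N_G(u_i)\setminus N]$ to $u_i$, through a Hamiltonian path of $G[N]$ to $u_j$, and through a Hamiltonian path of $G[N_G(u_j)\setminus N]$ back to $u_k$. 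This is a Hamiltonian cycle.

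Without the reduction to $|I|=3$ and $N_G(u_k)=K\setminus N$, ``building a Hamiltonian cycle through the clique'' has nothing to work with. As written, your Case 2 does not establish the claim.
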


\begin{proof}
Suppose that this is not true. We may assume that  $v\in N_G(u_1)\cap N_G(u_2)=:N$. Then  $K=N_G(u_1)\cup N_G(u_2)$, as otherwise, there would be a vertex $v'\in K$ such that $v'\notin N_G(u_1)\cup N_G(u_2)$, so
$v$ has three pairwise nonadjacent neighbors $u_1,u_2,v'$, a contradiction. Moreover,
as $K$ is a maximum clique of $G$, we have $N_G(u_i)\subset K$ and so $N_G(u_i)\setminus N\ne \emptyset$ for $i=1,2$. Then there is a vertex $v_i\in N_G(u_i)\setminus N$ for $i=1,2$. Let $P$ be a Hamiltonian path in $G[K\setminus \{v\}]$ from $v_2$ to $v_1$. If $|I|=2$, then  $v_1u_1vu_2v_2P$ is a Hamiltonian cycle of $G$, a contradiction.

Suppose that $|I|\ge 3$.

As $G$ is claw-free, $N_G(u_3)\subseteq K\setminus N$.
Suppose that  $N_G(u_3)\ne K\setminus N$. Assume that $u_3$ is not adjacent to some vertex  $w\in N_G(u_1)\setminus N$. If $u_3$ has a neighbor $z$ in $N_G(u_2)\setminus N$, then $z$ has three pairwise nonadjacent neighbors $w,u_2,u_3$, a contradiction. So $u_3$ has no neighbor in $N_G(u_2)\setminus N$. As $G$ is connected, $u_3$ has a neighbor $y$ in $N_G(u_1)\setminus N$, so
 $y$ has three pairwise nonadjacent neighbors $u_1, u_3, v_2$, also a contradiction. Then
$N_G(u_3)=K\setminus N$. Recall that $K=N_G(u_1)\cup N_G(u_2)$.
Thus, any vertex in $K$ has at least two neighbors in $I$.
As $G$ is claw-free, it follows that $|I|=3$.
So $G$ has structure as displayed in Fig. \ref{f2}, so
$G$ contains a Hamiltonian cycle $u_3Pu_1Qu_2Ru_3$, where $P$, $Q$, $R$ are  Hamiltonian paths in
$G[N_G(u_1)\cap N_G(u_3)]$, $G[N]$, $G[N_G(u_2)\cap N_G(u_3)]$, respectively. This is
a contradiction.
\end{proof}

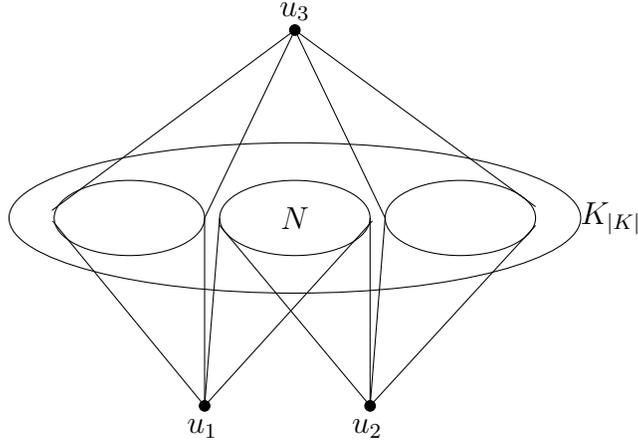
\begin{figure}[htbp]
\centering
%\begin{tikzpicture}
%\draw (2,-0.2) ellipse (1 and 3.8);
%\draw (2,2) ellipse (0.5 and 1);
%\draw (2,-0.2) ellipse (0.5 and 1);
%\draw (2,-2.4) ellipse (0.5 and 1);
%\filldraw [black] (5,0.8) circle (2pt);
%\draw  [black](5,0.8)--(2.2,2.92);
%\draw  [black](5,0.8)--(2,1);
%\draw  [black](5,0.8)--(2,0.8);
%\draw  [black](5,0.8)--(2.1,-1.2);
%\filldraw [black] (5,-1.4) circle (2pt);
%\draw  [black](5,-1.4)--(2.1,0.8);
%\draw  [black](5,-1.4)--(2,-1.2);
%\draw  [black](5,-1.4)--(2,-1.4);
%\draw  [black](5,-1.4)--(2.2,-3.32);
%\node at (5.36, 0.76) {$u_1$};
%\node at (5.36, -1.43) {$u_2$};
%\node at (2, 4) {$K_{|K|}$};
%\node at (2, -0.2) {$N$};
%\filldraw [black] (-1,-0.2) circle (2pt);
%\draw  [black](-1,-0.2)--(1.8,2.92);
%\draw  [black](-1,-0.2)--(2,1);
%\draw  [black](-1,-0.2)--(2,-1.4);
%\draw  [black](-1,-0.2)--(1.8,-3.32);
%\node at (-1.35, -0.2) {$u_3$};
%\end{tikzpicture}
\begin{tikzpicture}
\draw (-0.2,2) ellipse (3.8 and 1);
\draw (2,2) ellipse (1 and 0.5);
\draw (-0.2,2) ellipse (1 and 0.5);
\draw (-2.4,2) ellipse (1 and 0.5);
\filldraw [black] (0.8,-0.5) circle (2pt);
\draw  [black](0.8,-0.5)--(3,1.9);
\draw  [black](0.8,-0.5)--(1,2);
\draw  [black](0.8,-0.5)--(0.8,2);
\draw  [black](0.8,-0.5)--(-1.2,1.9);
\filldraw [black] (-1.4,-0.5) circle (2pt);
\draw  [black](-1.4,-0.5)--(0.83,1.96);
\draw  [black](-1.4,-0.5)--(-1.2,2);
\draw  [black](-1.4,-0.5)--(-1.4,2);
\draw  [black](-1.4,-0.5)--(-3.42,1.96);
\node at (0.76,-0.8) {$u_2$};
\node at (-1.43,-0.8) {$u_1$};
\node at (4,2) {$K_{|K|}$};
\node at (-0.2,2) {$N$};
\filldraw [black] (-0.2,4.5) circle (2pt);
\draw  [black](-0.2,4.5)--(3,2.15);
\draw  [black](-0.2,4.5)--(1,2);
\draw  [black](-0.2,4.5)--(-1.4,2);
\draw  [black](-0.2,4.5)--(-3.43,2.1);
\node at (-0.2,4.75) {$u_3$};
\end{tikzpicture}
\caption{The structure of $G$ in the proof of Theorem \ref{N1}.}
\label{f2}
\end{figure}

Assume that $d_G(u_i)\ge d_G(u_j)$ for any $1\le i<j\le |I|$.
By Claim \ref{C1}, for any $1\le i<j\le |I|$, $N_G(u_i)\cap N_G(u_j)=\emptyset$.
Let $\mathbf{x}$ be the Perron vector of $G$. For $j=1,\dots, |I|$, let $y_j=x_{u_j}$,  and by symmetry, all  the entry of $\mathbf{x}$ at each vertex  from
$N_G(u_i)$ is the same, which we denote by $x_i$, and the entry of $\mathbf{x}$ at each vertex  from
$K\setminus \bigcup_{\ell=1}^{|I|}N_G(u_\ell)$ (if any exists) is the same, which we denote by $x_{|I|+1}$.
Let $1\le i<j\le |I|$. On one hand, from $\rho(G)\mathbf{x}=A(G)\mathbf{x}$ at any vertex in  $N_G(u_i)$ and $N_G(u_j)$, we have
\[
\rho(G)x_i=(d_G(u_i)-1)x_i+\sum_{k=1\atop k\ne i}^{|I|}d_G(u_k)x_k+\left|K\setminus \bigcup_{\ell=1}^{|I|}N_G(u_\ell)\right|x_{|I|+1}+y_i
\]
and
\[
\rho(G)x_j=(d_G(u_j)-1)x_j+\sum_{k=1\atop k\ne j}^{|I|}d_G(u_k)x_k+\left|K\setminus\bigcup_{\ell=1}^{|I|}N_G(u_\ell)\right|x_{|I|+1}+y_j,
\]
so
\begin{equation} \label{CAI}
(\rho(G)+1)(x_i-x_j)=y_i-y_j.
\end{equation}
On the other hand,  from $\rho(G)\mathbf{x}=A(G)\mathbf{x}$ at $u_i$ and $u_j$, we have $\rho(G)y_i=d_G(u_i)x_i$ and $\rho(G)y_j=d_G(u_j)x_j$, so
\[
\rho(G)y_i-\rho(G)y_j=d_G(u_i)x_i-d_G(u_j)x_j\ge d_G(u_i)(x_i-x_j)
\]
in view of $d_G(u_i)\ge d_G(u_j)$.
Now we have
\[
\rho(G)(\rho(G)+1)(x_i-x_j)=\rho(G)y_i-\rho(G)y_j\ge d_G(u_i)(x_i-x_j),
\]
i.e.,
\[
(\rho(G)^2+\rho(G)-d_G(u_i))(x_i-x_j)\ge0.
\]
As $G$ contains $K$, we have  $\rho(G)\ge\rho(K)=|K|-1$, so $\rho^2(G)+\rho(G)-d_G(u_i)\ge\rho^2(G)-1>0$,
and then  $x_i-x_j\ge 0$.  From \eqref{CAI}, we have  $y_i\ge y_j$.
It follows that
\[
 y_1=\max\{y_i:1\le i\le|I|\}.
\]

Suppose that $d_G(u_2)\ge 2$.  Let $E'$ be a set of edges of $G$  incident to $u_2$ except one edge. Let $K'=\{w: wu_2\in E'\}$. Let $G'=G-E'+\{u_1w: w\in K'\}$. Note that $G'$ is claw-free and as $d_{G'}(u_2)=1$, it is not Hamiltonian.
By Lemma
\ref{perron},  we have $\rho(G')>\rho(G)$, which is a contradiction. Thus  $d_G(u_2)=1$ and so
$d_G(u_2)=\dots=d_G(u_{|I|})=1$.

We claim that $K\setminus \bigcup_{i=1}^{|I|}N_G(u_i)=\emptyset$, as otherwise, for a vertex $w\in K\setminus \bigcup_{i=1}^{|I|}N_G(u_i)$, we have by Lemma \ref{addedges} that $\rho(G+u_1w)>\rho(G)$, which is a contradiction, as $G+u_1w$ is claw-free and non-Hamiltonian.

Therefore,
$G\cong G_{n,|I|}$.
\end{proof}

\section{Proof of Theorem \ref{N2}}

We need a few lemmas, where the proof of the first five ones is given in Appendix.

\begin{Lemma} \label{bbb} For $n\ge 6$, let $\Gamma_{n,2}''$ be the graph displayed in Fig. \ref{f1n}.
Then $\rho(\Gamma_{n,2})>\max\{\rho(\Gamma_{n,2}'), \rho(\Gamma_{n,2}'')\}$.
\end{Lemma}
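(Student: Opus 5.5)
The plan is to compute spectral radii through equitable partitions (Lemma \ref{QM}) and compare the resulting characteristic polynomials, not the graphs themselves. Lemma \ref{addedges} alone cannot settle the comparison. $\Gamma_{n,2}$ has $\binom{n-2}{2}+n-2$ edges and $\Gamma_{n,2}'$ has $\binom{n-2}{2}+4$ edges. The vertex $u_1$ of high degree in $\Gamma_{n,2}$ does not sit inside the other graph as a subgraph.

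\textbf{Step 1: quotient polynomial for $\Gamma_{n,2}$.} Use the partition
\[
\{u_1\},\ \{u_2\},\ \{v_1,\dots,v_{n-4}\},\ \{v_{n-3}\},\ \{v_{n-2}\}.
\]
It is equitable. Its $5\times 5$ quotient matrix $B_1$ has $\rho(B_1)=\rho(\Gamma_{n,2})$, and let $f(x)=\det(xI-B_1)$.

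\textbf{Step 2: quotient polynomial for $\Gamma_{n,2}'$.} Use the partition $\{u_1,u_2\}$, $\{v_1,v_2\}$, $\{v_3,\dots,v_{n-2}\}$, which gives
\[
B_2=\begin{pmatrix}0&2&0\\ 2&1&n-4\\ 0&2&n-5\end{pmatrix},
\]
and let $g(x)=\det(xI-B_2)$.

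\textbf{Step 3: quotient polynomial for $\Gamma_{n,2}''$.} Do the same with the natural equitable partition read off Fig.~\ref{f1n}: the two independent vertices, their neighbourhood classes in the clique, and the rest of the clique. Call the resulting polynomial $h(x)$.

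\textbf{Step 4: lower bound for $\rho(\Gamma_{n,2})$.} $\Gamma_{n,2}$ contains $K_{n-2}$ together with $u_1$, which is joined to all but one clique vertex. So $\rho(\Gamma_{n,2})>n-3$, and a slightly sharper bound $\rho(\Gamma_{n,2})>\theta_n$ should follow from the $3\times3$ quotient of that subgraph. A concrete target is something like $\theta_n=n-3+\frac{n-4}{n-2}$.

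\textbf{Step 5: comparison.} It then suffices to show that $g(x)>0$ and $h(x)>0$ for all $x\ge \rho(\Gamma_{n,2})$. Since $g$ and $h$ are monic, this places their largest roots below $\rho(\Gamma_{n,2})$. The cleanest way is to show that $g(x)>0$ for $x\ge\theta_n$, for instance by writing $g$ as a polynomial in $x-(n-3)$ and checking the signs of the coefficients. For $h$ I would try the same. If that fails, I would evaluate the sign of $f(\rho(\Gamma_{n,2}''))$: this is negative when $\rho(\Gamma_{n,2}'')<\rho(\Gamma_{n,2})$, provided $f$ is increasing beyond its second-largest root. Equivalently, compare $f(x)$ with $h(x)\cdot(\text{quadratic})$ on $[n-3,\infty)$.

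\textbf{Main obstacle.} Intuitively $\rho(\Gamma_{n,2}')\approx n-3+O(1/n)$, while $\rho(\Gamma_{n,2})\approx n-2-O(1/n)$, so the gap for $\Gamma_{n,2}'$ is about $1$ and easy. The delicate part is $\Gamma_{n,2}''$ if its independent vertex also has large degree, since then both radii are close to $n-2$. In that case I would compute $f(x)-q(x)h(x)$ explicitly for a suitable linear or quadratic $q$ so that the degrees match, and show that this difference is negative for $x\ge n-3$. This gives $f(\rho(\Gamma_{n,2}''))<0$, so $f$ still has a root beyond $\rho(\Gamma_{n,2}'')$. Small cases such as $n=6,7$, where the asymptotic sign arguments may fail, would be checked numerically.
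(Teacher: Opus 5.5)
Your route differs from the paper's and is viable, but two points in your plan need correcting. The paper computes no characteristic polynomials here. For $\Gamma_{n,2}''$ it uses the Perron vector to get $(\rho^2+\rho-1)(x_{v_1}-x_{v_{n-2}})=(n-4)x_{v_1}>0$. It then moves the pendant edge $u_2v_{n-2}$ to $u_2v_1$ by Lemma \ref{perron}, which produces $\Gamma_{n,2}$. For $\Gamma_{n,2}'$ with $n\ge 7$ it shows $y_3\ge\bigl(1-\frac{4}{(n-2)(n-3)}\bigr)y_2$ and applies the Rayleigh quotient to the switch $\Gamma_{n,2}'-u_2v_1+\{u_1v_j:3\le j\le n-3\}\cong\Gamma_{n,2}$. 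The case $n=6$ is checked numerically.

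\textbf{$\Gamma_{n,2}''$: your first method cannot work.} $\Gamma_{n,2}''$ also contains $K_{n-1}-e$. So $\rho(\Gamma_{n,2}'')$ exceeds every $\theta_n$ obtained from that subgraph, and $h>0$ on $[\theta_n,\infty)$ is false. Your fallback is therefore forced, and it works cleanly. With your partitions, $f(x)=x\bigl(x^4-(n-5)x^3-(3n-9)x^2-(n-1)x+2n-8\bigr)$; the factor $x$ comes from the nonadjacent twins $u_1,v_{n-2}$. Also $h(x)=x^4-(n-4)x^3-(2n-5)x^2+(n-4)x+n-3$. One checks that $f(x)=(x+1)h(x)-(x+n-3)$. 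Hence $f(\rho(\Gamma_{n,2}''))<0$, with no estimate of $\rho(\Gamma_{n,2}'')$ needed. The multiplier matters: $f-xh$ changes sign inside $(n-3,n-2)$, which is exactly where both spectral radii lie.

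\textbf{$\Gamma_{n,2}'$: small case.} With $s=x-(n-3)$ one gets $g=s(s+n-3)(s+n-2)-4(s+2)$. Your $\theta_n=n-3+\frac{n-4}{n-2}$ is a valid lower bound for $\rho(\Gamma_{n,2})$ and works for $n\ge7$. It fails at $n=6$, since $\rho(\Gamma_{6,2}')\approx 3.626>3.5=\theta_6$. So the numerical check there is genuinely needed, or you can use the exact value $\rho(K_{n-1}-e)=\frac{n-4+\sqrt{n^2-8}}{2}$ instead.

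\textbf{Trade-off.} The paper's edge-moving arguments are short and use the same tools as the main proof. Yours is mechanical and gives an exact identity for the harder comparison, but it depends on the determinant computations you left unspecified.
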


\begin{figure}[htbp]
\centering
\begin{tikzpicture}
\draw (2,-0.8) ellipse (2 and 1);
\filldraw [black] (0.8,-0.8) circle (2pt);
\filldraw [black] (1.2,-0.8) circle (0.8pt);
\filldraw [black] (1.5,-0.8) circle (0.8pt);
\filldraw [black] (1.8,-0.8) circle (0.8pt);
\filldraw [black] (2.3,-0.8) circle (2pt);
\filldraw [black] (3.2,-0.8) circle (2pt);
\draw  [black](1.5,-3)--(0.8,-0.8);
\draw  [black](1.5,-3)--(2.3,-0.8);
\filldraw [black] (1.5,-3) circle (2pt);
\filldraw [black] (3.2,-3) circle (2pt);
\draw  [black](3.2,-3)--(3.2,-0.8);
\node at (2, -0.2) {$K_{n-2}$};
\node at (2, -3.8) {$\Gamma_{n,2}''$};

\end{tikzpicture}
\caption{The structure of $\Gamma_{n,2}''$.}
\label{f1n}
\end{figure}
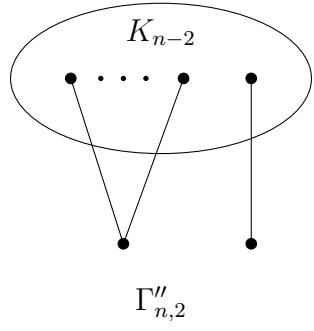

\begin{Lemma}\label{I3}
%Let $\Gamma_{n,3}$, $\Gamma_{n,3}'$ and $\Gamma_{n,3}''$ be three graphs with $n\ge 6$ displayed in Figs. \ref{f3}, \ref{fth} and \ref{flem}, respectively.
Let $\Gamma_{n,3}''$ be the graph displayed in Fig. \ref{flem} with $n\ge 6$.
If $n=6$, then  $\rho(\Gamma_{n,3}')>\rho(\Gamma_{n,3})>\rho(\Gamma_{n,3}'')$, and if  $n\ge7$, then  $\rho(\Gamma_{n,3})>\max\{\rho(\Gamma_{n,3}'), \rho(\Gamma_{n,3}'')\}$.
\end{Lemma}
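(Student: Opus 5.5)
The plan is to compute all three spectral radii through quotient matrices of equitable partitions (Lemma \ref{QM}) and then compare the resulting characteristic polynomials on an interval that is known to contain the spectral radii. All three graphs contain $K_{n-3}$, so each spectral radius is at least $n-4$. This gives the working interval $[n-4,\infty)$.

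\textbf{Quotient matrix for $\Gamma_{n,3}$.} Use the partition
\[
\{u_1,u_2\},\ \{v_1,\dots,v_{n-4}\},\ \{v_{n-3}\},\ \{u_3\}.
\]
It is equitable, and its quotient matrix is
\[
B=\begin{pmatrix} 0&n-4&0&0\\ 2&n-5&1&0\\ 0&n-4&0&1\\ 0&0&1&0\end{pmatrix}.
\]
So $\rho(\Gamma_{n,3})$ is the largest root of $f(x)=\det(xI-B)$, a quartic in $x$ with coefficients linear in $n$.

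\textbf{Quotient matrix for $\Gamma_{n,3}'$.} Use the partition
\[
\{u_1,u_2\},\ \{v_1\},\ \{v_2\},\ \{v_3,\dots,v_{n-3}\},\ \{u_3\}.
\]
It is again equitable and gives a $5\times 5$ quotient matrix $B'$ with characteristic polynomial $f'(x)$.

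\textbf{Quotient matrix for $\Gamma_{n,3}''$.} Take the coarsest equitable partition read off from Fig. \ref{flem}: group together the independent vertices with equal neighbourhoods, and group the clique vertices by which independent vertices they see. This gives a polynomial $f''(x)$. By Lemma \ref{QM}, each spectral radius is the largest real root of the corresponding polynomial.

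\textbf{Comparison for $n\ge 7$.} To prove $\rho(\Gamma_{n,3})>\rho(\Gamma_{n,3}')$, I would compute $h(x)=x f'(x)-\,(\text{suitable factor})\cdot f(x)$, or more simply evaluate $f'$ at $\rho:=\rho(\Gamma_{n,3})$, using $f(\rho)=0$ to reduce the degree. The goal is to show $f'(x)>0$ for all $x\ge\rho$. Since $f'$ is monic, this forces its largest root to lie below $\rho$.

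A convenient way to do this is as follows. Write $f'(x)=q(x)f(x)+r(x)$ with $\deg r\le 3$. Then show that $r(x)>0$ on $[n-4,\infty)$ for $n\ge 7$. This amounts to checking finitely many polynomial inequalities in $x$ and $n$, which I would verify by substituting $x=n-4+t$ with $t\ge0$ and observing that all coefficients in $t$ are positive once $n\ge7$. The same scheme, with $f''$ in place of $f'$, gives $\rho(\Gamma_{n,3})>\rho(\Gamma_{n,3}'')$.

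\textbf{The case $n=6$.} Here all matrices are numerical and small. The three largest roots can be computed, or separated by evaluating the polynomials at a rational point between them. This gives $\rho(\Gamma_{6,3}')>\rho(\Gamma_{6,3})>\rho(\Gamma_{6,3}'')$.

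\textbf{Main obstacle.} The hard step is the uniform-in-$n$ sign analysis of the remainder, that is, proving $f'(x)>0$ and $f''(x)>0$ for $x\ge\rho(\Gamma_{n,3})$ when $n\ge7$. The ordering between $\Gamma_{n,3}$ and $\Gamma_{n,3}'$ flips between $n=6$ and $n=7$, so the inequality must be tight near $n=7$. If the crude substitution $x=n-4+t$ fails at $n=7$, I would first sharpen the lower bound on $\rho(\Gamma_{n,3})$, for example to $\rho>n-4+\tfrac{c}{n}$ by evaluating $f$ at that point. I would then treat $n=7$ (and possibly $n=8$) numerically, and handle $n\ge 9$ by the asymptotic sign argument.
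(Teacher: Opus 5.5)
Your overall strategy works. For $\Gamma_{n,3}'$ it is essentially the paper's argument: the paper uses the same two quotient matrices and shows $g(x)-xf(x)>0$ on $[n-4,\infty)$ for $n\ge 8$, treating $n=6,7$ numerically. Your division step is actually cleaner. Writing $g$ for your $f'$, one gets
\[
g(x)=(x+1)f(x)+(n-7)x^3+(4n-26)x^2+(5n-29)x .
\]
So for $n\ge 7$ and $x\ge\rho(\Gamma_{n,3})$, where $f(x)\ge 0$, we have $g(x)>0$. There is no tightness at $n=7$. At $n=6$ the remainder is $-x(x^2+2x-1)$, which is negative at $\rho(\Gamma_{6,3})$, and this is exactly the reversal. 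You should say explicitly that the quotient $q=x+1$ is positive. Knowing only $g(\rho)=r(\rho)>0$ would not rule out two roots of $g$ above $\rho$.

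For $\Gamma_{n,3}''$ the paper argues differently. It rewires $\Gamma''_{n,3}$ into $G'=\Gamma''_{n,3}-u_3v_2-u_2v_3+u_2v_1+u_3v_3\cong\Gamma_{n,3}$. It then shows $\mathbf y^\top(A(G')-A(\Gamma''_{n,3}))\mathbf x>0$ using explicit ratios of Perron-vector entries of both graphs.

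Your polynomial route also works here, but the step you state is false, and it fails for every $n$, not only near $n=7$. Use the equitable partition $\{u_1,u_2\},\{u_3\},K',\{v_1,v_3\},\{v_2\}$; your grouping rule gives seven parts, and merging the symmetric pairs gives this five-part one. Its characteristic polynomial is
\[
P(x)=x^5-(n-6)x^4-(4n-17)x^3-(3n-11)x^2+2(n-6)x+(n-5)=(x+1)f(x)+r(x),
\]
where $r(x)=x^3-(n-5)x^2-(n-1)x-(n-3)$. Since $r(n-4)=-(4n-15)<0$, the remainder is not positive on $[n-4,\infty)$.

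The fix is a much stronger lower bound than $n-4+c/n$:
\begin{itemize}
\item $f(n-3)=(n-4)\bigl(2-(n-3)(n-5)\bigr)<0$, so $\rho(\Gamma_{n,3})>n-3$.
\item For $x\ge n-3$ and $n\ge 7$, $r(x)\ge 2x^2-(n-1)x-(n-3)\ge (n-3)(n-6)>0$.
\end{itemize}
Hence $P>0$ on $[\rho(\Gamma_{n,3}),\infty)$, and $\rho(\Gamma''_{n,3})<\rho(\Gamma_{n,3})$ for all $n\ge 7$ with no numerics.

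So the real obstacle is the $\Gamma''_{n,3}$ comparison, not the $\Gamma'_{n,3}$ one. Once it is handled this way, your route gives a shorter and more mechanical proof than the paper's eigenvector computation. The price is computing a $5\times 5$ characteristic polynomial.
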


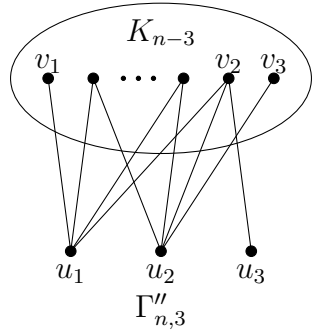
\begin{figure}[htbp]
\centering
\begin{tikzpicture}
\draw (2,-0.8) ellipse (2 and 1);
\filldraw [black] (0.5,-0.8) circle (2pt);
\filldraw [black] (1.1,-0.8) circle (2pt);
\filldraw [black] (1.5,-0.8) circle (0.8pt);
\filldraw [black] (1.7,-0.8) circle (0.8pt);
\filldraw [black] (1.9,-0.8) circle (0.8pt);
\filldraw [black] (2.3,-0.8) circle (2pt);
\filldraw [black] (2.9,-0.8) circle (2pt);
\filldraw [black] (3.5,-0.8) circle (2pt);
\filldraw [black] (0.8,-3.1) circle (2pt);
\draw  [black](0.8,-3.1)--(0.5,-0.8);
\draw  [black](0.8,-3.1)--(1.1,-0.8);
\draw  [black](0.8,-3.1)--(2.3,-0.8);
\draw  [black](0.8,-3.1)--(2.9,-0.8);
\filldraw [black] (2.0,-3.1) circle (2pt);
\draw  [black](2,-3.1)--(3.5,-0.8);
\draw  [black](2,-3.1)--(1.1,-0.8);
\draw  [black](2,-3.1)--(2.3,-0.8);
\draw  [black](2,-3.1)--(2.9,-0.8);
\filldraw [black] (3.2,-3.1) circle (2pt);
\draw  [black](3.2,-3.1)--(2.9,-0.8);
\node at (2.9, -0.6) {$v_2$};
\node at (0.5, -0.6) {$v_1$};
\node at (3.5, -0.6) {$v_3$};
\node at (0.8, -3.4) {$u_1$};
\node at (2, -3.4) {$u_2$};
\node at (3.2, -3.4) {$u_3$};
\node at (2, -0.2) {$K_{n-3}$};
\node at (2, -3.9) {$\Gamma_{n,3}''$};
\end{tikzpicture}
\caption{The structure of $\Gamma_{n,3}''$.}
\label{flem}
\end{figure}

\begin{Lemma}\label{I44}
Let  $\Gamma_{n,4}'$ be the graph displayed in Fig. \ref{abc} with $n\ge 8$. Then  $\rho(\Gamma_{n,4})>\rho(\Gamma_{n,4}')$.
\end{Lemma}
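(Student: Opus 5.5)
The plan is to compare the two spectral radii through characteristic polynomials of quotient matrices, in the same spirit as Lemmas \ref{bbb} and \ref{I3}. I take $\Gamma_{n,4}'$ to be the split graph with $I=\{u_1,\dots,u_4\}$ and $K=\{v_1,\dots,v_{n-4}\}$ whose edges between $K$ and $I$ are
\[
u_1v_1,\quad u_2v_i\ (1\le i\le n-5),\quad u_3v_j\ (j=1,\dots,n-6,\,n-4),\quad u_4v_k\ (2\le k\le n-4).
\]
This is my reading of the configuration in Fig. \ref{abc}, guided by the commented definition; the method does not depend much on the exact edge list.

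For $\Gamma_{n,4}$, I partition $V$ into $\{u_1,u_2\}$, $\{u_3\}$, $\{u_4\}$, $\{v_1\}$, $\{v_2,\dots,v_{n-5}\}$ and $\{v_{n-4}\}$. Every vertex in a given part has the same number of neighbours in each other part, so this partition is equitable. By Lemma \ref{QM}, $\rho(\Gamma_{n,4})$ is the largest root of the characteristic polynomial $f(x)$ of the resulting $6\times6$ quotient matrix.

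For $\Gamma_{n,4}'$, I use the singletons $\{u_1\},\dots,\{u_4\}$, $\{v_1\}$, $\{v_{n-5}\}$, $\{v_{n-4}\}$ together with the block $\{v_2,\dots,v_{n-6}\}$. Each vertex of this block is adjacent to exactly $u_2,u_3,u_4$ in $I$, so the partition is again equitable. Lemma \ref{QM} then gives $\rho(\Gamma_{n,4}')$ as the largest root of the characteristic polynomial $g(x)$ of an $8\times 8$ quotient matrix. I would compute $f$ and $g$ explicitly as polynomials in $x$ with coefficients linear or quadratic in $n$, and then form $h(x)=x^2f(x)-g(x)$ (or $g-x^2f$), whichever makes the degrees match.

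The key step is to show $h(x)<0$ for all $x\ge n-5$. Both graphs contain $K_{n-4}$, so both spectral radii are at least $n-5$. From $h<0$ I get $g(x)>x^2f(x)$ on $[n-5,\infty)$, so
\[
g(\rho(\Gamma_{n,4}))>\rho(\Gamma_{n,4})^2f(\rho(\Gamma_{n,4}))=0 .
\]
This alone does not place $\rho(\Gamma_{n,4})$ above the largest root of $g$. To finish I need either the reverse inequality at $\rho(\Gamma_{n,4}')$, namely $f(\rho(\Gamma_{n,4}'))<0$, or the fact that $g$ has no root larger than $\rho(\Gamma_{n,4})$. The cleanest route is to show $h(x)$ has a fixed sign for $x\ge n-5$ and then evaluate at $x=\rho(\Gamma_{n,4}')$: this gives $f(\rho(\Gamma_{n,4}'))<0$, and since $f$ is monic and positive beyond its largest root, $\rho(\Gamma_{n,4})>\rho(\Gamma_{n,4}')$. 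To prove the sign condition, I would substitute $x=n-5+t$ with $t\ge0$, expand $h$ as a polynomial in $t$ and $n$, and check that all coefficients have the right sign for $n\ge8$, handling any small leftover cases such as $n=8,9$ numerically.

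I expect the sign analysis of $h$ to be the main obstacle, since the degree-8 polynomial is messy. If it becomes unwieldy, I would first try a Perron-vector shortcut. Starting from $\Gamma_{n,4}'$, I would look for a sequence of edge rotations $vw\to uw$ allowed by Lemma \ref{perron}, for example moving $u_1$'s single edge and $u_3$'s edge at $v_{n-4}$ onto vertices with larger Perron entries. The aim is to reach $\Gamma_{n,4}$, or a graph isomorphic to it, with the spectral radius strictly increasing at each step. The needed comparisons $x_u\ge x_v$ would come from the eigen-equations, as in the proof of Theorem \ref{N1}, where differences of entries are bounded using $\rho\ge n-5>1$. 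If no such monotone path exists, I would fall back on the polynomial computation.
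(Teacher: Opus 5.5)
You have the right framework: equitable partitions, Lemma~\ref{QM}, and a polynomial inequality evaluated at $\rho(\Gamma_{n,4}')$ to force $f(\rho(\Gamma_{n,4}'))<0$. Your reading of $\Gamma_{n,4}'$ is also correct. But the inequality you name as the key step is false for every $n\ge 8$.

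The symmetry $u_2\leftrightarrow u_3$, $v_{n-5}\leftrightarrow v_{n-4}$ splits your $8\times 8$ characteristic polynomial as $(x^2+x-1)g(x)$; the antisymmetric part contributes $x^2+x-1$. Here
\[
g(x)=x^6-(n-7)x^5-(5n-26)x^4-(4n-19)x^3+(6n-36)x^2+(4n-21)x-n+7
\]
comes from the $6$-part partition that merges $\{u_2,u_3\}$ and $\{v_{n-5},v_{n-4}\}$. For $\Gamma_{n,4}$,
\[
f(x)=x^6-(n-7)x^5-5(n-5)x^4-(3n-13)x^3+4(2n-11)x^2+4(n-5)x-2n+12 .
\]
Your $h$ is then
\[
x^2f(x)-(x^2+x-1)g(x)=-x^7+(n-7)x^6+(5n-25)x^5+(n-1)x^4-(10n-56)x^3+(n-10)x^2+(5n-28)x-n+7 .
\]
This tends to $-\infty$. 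At $x=n-5=:y$, however, it equals $3y^6+y^5-6y^4+7y^3-4y+2>0$; for $n=10$ this is $47107$. So $h$ changes sign on $[n-5,\infty)$ for every $n$. No coefficient check after substituting $x=n-5+t$, and no numerics for small $n$, can rescue this step. It is even still positive at the average degree $n-3-\frac{8}{n}$: about $5\cdot 10^4$ at $x=6.2$ when $n=10$. So the multiplier $x^2$ is a poor normalization in its own right.

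Two ingredients are missing.

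\textbf{Use the symmetry.} Compare $g$ with $f$ directly; equivalently, compare your $8\times 8$ polynomial with $(x^2+x-1)f$. Then
\[
g(x)-f(x)=x^4-(n-6)x^3-2(n-4)x^2-x+n-5=x^2(x-n+4)(x+2)-(x-n+5).
\]

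\textbf{Use a better lower bound (essential).} This quartic equals $-1$ at $x=n-4$, so the clique bound $\rho\ge n-5$ is too weak for any comparison of this kind. The paper observes that both graphs have $\binom{n-4}{2}+3n-14$ edges. Hence both spectral radii are at least the average degree $n-3-\frac{8}{n}$, which exceeds $n-4$ for $n\ge 9$. On $[n-3-\frac{8}{n},\infty)$ the quartic is positive, so $f(\rho(\Gamma_{n,4}'))<g(\rho(\Gamma_{n,4}'))=0$. The case $n=8$ is checked numerically ($4.5722>4.5513$).

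Your Perron-rotation fallback does not help either. The natural route first rotates $u_3v_{n-4}$ to $u_3v_{n-5}$; this raises $\rho$, since $x_{v_{n-5}}=x_{v_{n-4}}$. It must then move $u_1$'s edge from $v_1$ to $v_{n-4}$. In $\Gamma_{n,4}$ one has
\[
(\rho^2+\rho-2)(x_{v_1}-x_{v_{n-4}})=\sum_{j=2}^{n-5}x_{v_j}>0 .
\]
By Lemma~\ref{perron}, the intermediate graph therefore has larger spectral radius than $\Gamma_{n,4}$, so the path is not monotone.
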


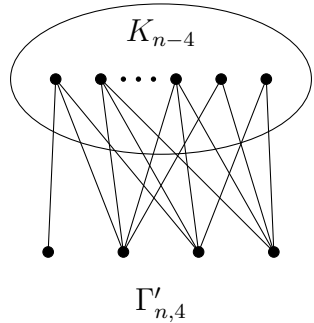
\begin{figure}[htbp]
\centering
\begin{tikzpicture}
\draw (2,-0.8) ellipse (2 and 1);
\filldraw [black] (0.6,-0.8) circle (2pt);
\filldraw [black] (1.2,-0.8) circle (2pt);
\filldraw [black] (1.5,-0.8) circle (0.8pt);
\filldraw [black] (1.7,-0.8) circle (0.8pt);
\filldraw [black] (1.9,-0.8) circle (0.8pt);
\filldraw [black] (2.2,-0.8) circle (2pt);
\filldraw [black] (2.8,-0.8) circle (2pt);
\filldraw [black] (3.4,-0.8) circle (2pt);
\filldraw [black] (0.5,-3.1) circle (2pt);
\draw  [black](0.5,-3.1)--(0.6,-0.8);
\filldraw [black] (1.5,-3.1) circle (2pt);
\draw  [black](1.5,-3.1)--(0.6,-0.8);
\draw  [black](1.5,-3.1)--(1.2,-0.8);
\draw  [black](1.5,-3.1)--(2.2,-0.8);
\draw  [black](1.5,-3.1)--(2.8,-0.8);
\filldraw [black] (2.5,-3.1) circle (2pt);
\draw  [black](2.5,-3.1)--(0.6,-0.8);
\draw  [black](2.5,-3.1)--(1.2,-0.8);
\draw  [black](2.5,-3.1)--(2.2,-0.8);
\draw  [black](2.5,-3.1)--(3.4,-0.8);
\filldraw [black] (3.5,-3.1) circle (2pt);
\draw  [black](3.5,-3.1)--(1.2,-0.8);
\draw  [black](3.5,-3.1)--(2.2,-0.8);
\draw  [black](3.5,-3.1)--(2.8,-0.8);
\draw  [black](3.5,-3.1)--(3.4,-0.8);
\node at (2, -0.2) {$K_{n-4}$};
\node at (2, -3.8) {$\Gamma_{n,4}'$};
\end{tikzpicture}
\caption{The structure of $\Gamma_{n,4}'$.}
\label{abc}
\end{figure}

\begin{Lemma}\label{I5}
Let $\Gamma_{n,5}'$ and $\Gamma_{n,5}''$ be two graphs displayed in Fig. \ref{f22} with $n\ge10$.
Then $\rho(\Gamma_{n,5})>\rho(\Gamma_{n,5}'')>\rho(\Gamma_{n,5}')$.
\end{Lemma}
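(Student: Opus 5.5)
The plan is to compare the three spectral radii through equitable quotient matrices and Lemma \ref{QM}, and to use the edge-rotation Lemma \ref{perron} wherever one graph arises from another by shifting edges toward a vertex with larger Perron entry.

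\textbf{Quotient matrices.} In $\Gamma_{n,5}$ the vertex classes $\{v_1\}$, $\{v_2,\dots,v_{n-6}\}$, $\{v_{n-5}\}$, $\{u_1\}$, $\{u_2,u_3\}$, $\{u_4\}$, $\{u_5\}$ form an equitable partition. This holds because $u_2,u_3$ have identical neighbourhoods and every vertex of $\{v_2,\dots,v_{n-6}\}$ has the same neighbours in $I$, namely $u_2,u_3,u_4$. So $\rho(\Gamma_{n,5})$ is the largest root of the characteristic polynomial $f(x)$ of a $7\times 7$ quotient matrix $B$. In the same way I would read off natural equitable partitions of $\Gamma_{n,5}'$ and $\Gamma_{n,5}''$ from Fig.~\ref{f22}. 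In each graph the vertices of $K$ with equal neighbourhoods in $I$ are grouped, as are the vertices of $I$ with equal neighbourhoods. This gives quotient matrices $B'$, $B''$ with characteristic polynomials $f'(x)$, $f''(x)$, whose entries are linear in $n$. Lemma \ref{QM} then identifies each spectral radius with the largest root of the corresponding polynomial.

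\textbf{Comparing the polynomials.} All three graphs contain $K_{n-5}$, so each spectral radius is at least $n-6$. To prove $\rho(\Gamma_{n,5})>\rho(\Gamma_{n,5}'')$, it suffices to show $f(x)<f''(x)$ for all $x\ge \rho(\Gamma_{n,5}'')$. Then $f(\rho(\Gamma_{n,5}''))<0$, so $f$ has a root larger than $\rho(\Gamma_{n,5}'')$. Since the matrices differ only in a few entries, I expect $h(x)=f''(x)-f(x)$ to be a low-degree polynomial in $x$ with coefficients polynomial in $n$. It should therefore be possible to show $h(x)>0$ for $x\ge n-6$ and $n\ge 10$: substitute $x=n-6+s$ with $s\ge 0$, expand in $s$, and check that all coefficients are positive once $n\ge 10$. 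The inequality $\rho(\Gamma_{n,5}'')>\rho(\Gamma_{n,5}')$ would be handled identically using $f'-f''$.

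\textbf{Shortcut via rotation.} Before grinding out the polynomials, I would check whether $\Gamma_{n,5}$ (respectively $\Gamma_{n,5}''$) is obtained from $\Gamma_{n,5}''$ (respectively $\Gamma_{n,5}'$) by moving a set of edges $vw$ to $uw$. If the Perron entry at $u$ can be shown to be at least the entry at $v$, then Lemma \ref{perron} gives the inequality at once. That entry comparison would come from the eigen-equations, in the style of the argument leading to \eqref{CAI} in the proof of Theorem \ref{N1}, typically when $N(v)\setminus\{u\}\subseteq N(u)$.

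\textbf{Main obstacle.} The hardest step is the sign analysis of $h(x)$ on $[n-6,\infty)$. The bound $x\ge n-6$ may be too weak near small $n$, which is presumably why the hypothesis $n\ge 10$ appears. If positivity fails just above $n-6$, I would first sharpen the lower bound on $\rho$, for instance to $\rho>n-5$ using the extra edges from $I$ to $K$, and otherwise verify the remaining small cases numerically.
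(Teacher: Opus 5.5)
Your route to $\rho(\Gamma_{n,5})>\rho(\Gamma_{n,5}'')$ is the paper's: equitable quotients, Lemma \ref{QM}, and a comparison of characteristic polynomials. However, the step you rely on is false. Use $7$-class partitions. For $\Gamma_{n,5}''$ this requires the automorphism interchanging the two neighbours of the degree-$2$ vertex of $I$; grouping only vertices with identical neighbourhoods gives $9$ classes, and then your two polynomials do not even have the same degree. With $7$-class partitions both characteristic polynomials are $x$ times a sextic, and $f''(x)-f(x)=x\,h(x)$ with
\[
h(x)=x^4-(n-7)x^3-(2n-6)x^2-x+2n-3 .
\]
For every $n\ge 10$ one has $h(n-6)=-n(n-6)^2+n+3<0$ and $h(n-5)=-4(n-5)^2+n+2<0$, while $h(n-4)=n^3-14n^2+65n-95>0$. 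So the largest zero of $h$ lies in $(n-5,n-4)$, and ``$h>0$ on $[n-6,\infty)$'' cannot be proved. Your fallback bound $\rho>n-5$ also fails; it is exactly what the average degree $n-5$ of $\Gamma_{n,5}''$ gives. It fails for every $n$, so checking finitely many small cases numerically does not repair it.

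The missing ingredient is a lower bound on $\rho(\Gamma_{n,5}'')$ that exceeds the largest zero of $h$. The paper proves $\rho(\Gamma_{n,5}'')>n-4$ by evaluating the sextic factor of $\Gamma_{n,5}''$ at $x=n-4$. Written in $y=n-4$, this value is $-y^5+7y^4+11y^3-38y^2-22y+44$, which is negative for $y\ge 8$. The bound therefore holds only for $n\ge 12$. For $n=10,11$ both spectral radii are actually below $n-4$, so the paper settles these two values by direct computation. For $n\ge 12$, the facts $h''>0$ on $[n-4,\infty)$, $h'(n-4)>0$ and $h(n-4)>0$ finish the argument.

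Separately, the inequality $\rho(\Gamma_{n,5}'')>\rho(\Gamma_{n,5}')$ needs no polynomials at all. Since $\Gamma_{n,5}''\cong\Gamma_{n,5}'+u'v'$, Lemma \ref{addedges} gives it at once. Your search for rotations missed that this is a plain edge addition.
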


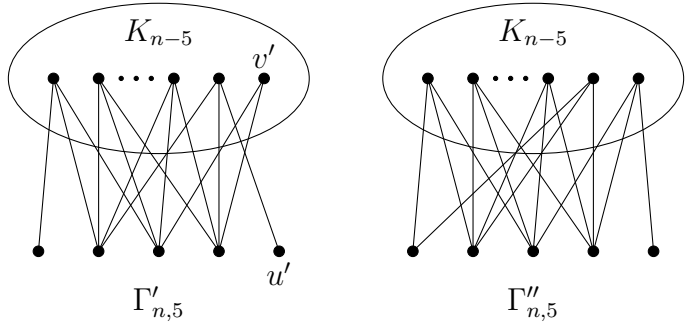
\begin{figure}[htbp]
\centering

\begin{tikzpicture}
\draw (7,-0.8) ellipse (2 and 1);
\filldraw [black] (5.6,-0.8) circle (2pt);
\filldraw [black] (6.2,-0.8) circle (2pt);
\filldraw [black] (6.5,-0.8) circle (0.8pt);
\filldraw [black] (6.7,-0.8) circle (0.8pt);
\filldraw [black] (6.9,-0.8) circle (0.8pt);
\filldraw [black] (7.2,-0.8) circle (2pt);
\filldraw [black] (7.8,-0.8) circle (2pt);
\filldraw [black] (8.4,-0.8) circle (2pt);

\filldraw [black] (5.4,-3.1) circle (2pt);
\draw  [black](5.4,-3.1)--(5.6,-0.8);
\filldraw [black] (6.2,-3.1) circle (2pt);
\draw  [black](6.2,-3.1)--(5.6,-0.8);
\draw  [black](6.2,-3.1)--(6.2,-0.8);
\draw  [black](6.2,-3.1)--(7.2,-0.8);
\draw  [black](6.2,-3.1)--(7.8,-0.8);
\filldraw [black] (7,-3.1) circle (2pt);
\draw  [black](7,-3.1)--(5.6,-0.8);
\draw  [black](7,-3.1)--(6.2,-0.8);
\draw  [black](7,-3.1)--(7.2,-0.8);
\draw  [black](7,-3.1)--(8.4,-0.8);
\filldraw [black] (7.8,-3.1) circle (2pt);
\draw  [black](7.8,-3.1)--(6.2,-0.8);
\draw  [black](7.8,-3.1)--(7.2,-0.8);
\draw  [black](7.8,-3.1)--(7.8,-0.8);
\draw  [black](7.8,-3.1)--(8.4,-0.8);
\filldraw [black] (8.6,-3.1) circle (2pt);
\draw  [black](8.6,-3.1)--(7.8,-0.8);
\node at (8.6, -3.4) {$u'$};
\node at (8.4, -0.5) {$v'$};
\node at (7, -0.2) {$K_{n-5}$};
\node at (7, -3.8) {$\Gamma_{n,5}'$};
\end{tikzpicture}\ \ \ \ \ \ \
\begin{tikzpicture}
\draw (7,-0.8) ellipse (2 and 1);
\filldraw [black] (5.6,-0.8) circle (2pt);
\filldraw [black] (6.2,-0.8) circle (2pt);
\filldraw [black] (6.5,-0.8) circle (0.8pt);
\filldraw [black] (6.7,-0.8) circle (0.8pt);
\filldraw [black] (6.9,-0.8) circle (0.8pt);
\filldraw [black] (7.2,-0.8) circle (2pt);
\filldraw [black] (7.8,-0.8) circle (2pt);
\filldraw [black] (8.4,-0.8) circle (2pt);

\filldraw [black] (5.4,-3.1) circle (2pt);
\draw  [black](5.4,-3.1)--(5.6,-0.8);
\draw  [black](5.4,-3.1)--(7.8,-0.8);
\filldraw [black] (6.2,-3.1) circle (2pt);
\draw  [black](6.2,-3.1)--(5.6,-0.8);
\draw  [black](6.2,-3.1)--(6.2,-0.8);
\draw  [black](6.2,-3.1)--(7.2,-0.8);
\draw  [black](6.2,-3.1)--(7.8,-0.8);
\filldraw [black] (7,-3.1) circle (2pt);
\draw  [black](7,-3.1)--(5.6,-0.8);
\draw  [black](7,-3.1)--(6.2,-0.8);
\draw  [black](7,-3.1)--(7.2,-0.8);
\draw  [black](7,-3.1)--(8.4,-0.8);
\filldraw [black] (7.8,-3.1) circle (2pt);
\draw  [black](7.8,-3.1)--(6.2,-0.8);
\draw  [black](7.8,-3.1)--(7.2,-0.8);
\draw  [black](7.8,-3.1)--(7.8,-0.8);
\draw  [black](7.8,-3.1)--(8.4,-0.8);
\filldraw [black] (8.6,-3.1) circle (2pt);
\draw  [black](8.6,-3.1)--(8.4,-0.8);
\node at (7, -0.2) {$K_{n-5}$};
\node at (7, -3.8) {$\Gamma_{n,5}''$};

\end{tikzpicture}
\caption{The structures of $\Gamma_{n,5}'$ and $\Gamma_{n,5}''$.}
\label{f22}
\end{figure}

\begin{Lemma}\label{I4}
Let $\Gamma_{n, |I|}^*$ and $\Gamma_{n, |I|}^{**}$ be two graphs displayed in Fig. \ref{flem11}. Denote by $V_i$ the vertex subset of $V(\Gamma_{n, |I|}^*)$ for $i=1,\dots,9$ and $V_j'$ the vertex subset of $V(\Gamma_{n, |I|}^{**})$ for $j=1,\dots,8$ that are displayed in Fig. \ref{flem11}, where $|V_4|=|V_1|$, $|V_4'|=|V_1'|$, $|V_2|=|V_2'|$, and  $V_3,V_5,\dots, V_9$, $V_6'$ and $V_7'$ are all vertex subset of a single vertex.
Then for $n\ge2|I|$ and $|I|\ge6$, $\rho(\Gamma_{n,|I|})>\max\{\rho(\Gamma_{n, |I|}^*), \rho(\Gamma_{n, |I|}^{**})\}$.
%Then for $n\ge2|I|, \frac{5}{2}|I|-6\}$ and $|I|\ge6$, $\rho(\Gamma_{n, |I|}^*)\le\rho(\Gamma_{n, |I|}^{**})<\rho(\Gamma_{n,|I|})$.
\end{Lemma}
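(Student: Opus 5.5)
We will compare the graphs through their equitable quotient matrices, using Lemma \ref{QM}, and avoid any direct spectral computation on the full graphs.

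\textbf{Step 1 (quotient matrices).} In $\Gamma_{n,|I|}$ with $t=|I|\ge 6$, the following partition is equitable:
\begin{itemize}
\item the $t-5$ pendant-type vertices $u_1,\dots,u_{t-5}$ together with their partners $v_1,\dots,v_{t-5}$, split further according to which of $u_{t-4},u_{t-3}$ they see (all of $v_1,\dots,v_{t-5}$ are adjacent to $u_{t-4}$ and $u_{t-3}$);
\item the middle block $\{v_{t-4},\dots,v_{n-t-2}\}$, adjacent to $u_{t-4},u_{t-3},u_{t-2}$;
\item the singletons $v_{n-t-1}$, $v_{n-t}$, $u_{t-4}$, $u_{t-3}$, $u_{t-2}$, $u_{t-1}$, $u_t$.
\end{itemize}
This gives a quotient matrix $B$ of bounded size whose entries depend only on $n$ and $t$. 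By Lemma \ref{QM}, $\rho(\Gamma_{n,t})$ is the largest root of the characteristic polynomial $f(x)=\det(xI-B)$. The same construction applied to the partitions $V_1,\dots,V_9$ of $\Gamma^*_{n,t}$ and $V_1',\dots,V_8'$ of $\Gamma^{**}_{n,t}$ from Fig.~\ref{flem11} gives polynomials $g$ and $h$, whose largest roots are $\rho(\Gamma^*_{n,t})$ and $\rho(\Gamma^{**}_{n,t})$.

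\textbf{Step 2 (comparison via $f$).} Rather than expanding everything, compare through $f$. For $\rho_1=\rho(\Gamma^*_{n,t})$, show that $f(\rho_1)<0$, or equivalently that $g(x)-f(x)>0$ for all $x\ge n-t-1$. Since $f$ is monic and $f(x)>0$ for $x>\rho(\Gamma_{n,t})$, the inequality $f(\rho_1)<0$ forces $\rho(\Gamma_{n,t})>\rho_1$.

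Before doing this, reduce the matrix sizes where the structure allows. The blocks $\{u_i,v_i\}$ with $i\le t-5$ behave identically in all three graphs and the $K$-part is a clique, so factors common to $f$, $g$ and $h$ can be cancelled. The common factors coming from the pendant pairs, such as $(x^2+x-1)$-type terms, can be divided out. What remains is a difference $g-f$ that should be a polynomial of low degree in $x$ with coefficients in $n,t$.

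\textbf{Step 3 (sign of the difference).} Use the lower bound $\rho\ge |K|-1=n-t-1\ge t-1\ge 5$, which holds because $K$ is a clique. Substitute $x=n-t-1+s$ with $s\ge 0$ and check that every coefficient is positive under $n\ge 2t$ and $t\ge 6$. Repeat the argument for $h$.

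\textbf{Alternative (perturbation).} If the algebra gets out of hand, try a perturbation argument first. Find a sequence of edge moves from $\Gamma^*_{n,t}$ (and from $\Gamma^{**}_{n,t}$) toward $\Gamma_{n,t}$ such that each move either adds an edge or shifts edges to a vertex with larger Perron entry, and apply Lemmas \ref{addedges} and \ref{perron} at each move. This only needs Perron-entry comparisons, which can be read off from the eigen-equations $\rho x_u=\sum_{w\sim u}x_w$ as in the proof of Theorem \ref{N1}.

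\textbf{Main obstacle.} The hard step is Step 3, the sign analysis of $g-f$ and $h-f$. The three graphs differ only slightly in a few edges near $u_{t-4},\dots,u_t$, so their spectral radii are very close. The leading terms of the difference may cancel, and positivity may hold only because of the constraints $n\ge 2t$ and $t\ge 6$, which will need careful bounding.
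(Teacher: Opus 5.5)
Your main route is a plan rather than a proof. Everything the lemma asserts sits in Step 3, which you do not carry out, and the simplifications meant to make Step 3 tractable do not hold.

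The three graphs have the same order, the same clique size $n-|I|$ and the same number of edges: every vertex of $K$ has exactly three neighbours in $I$ in each of them. So their full characteristic polynomials agree in the first three coefficients, and the spectral radii are very close. The quotient entries depend on two free parameters, $n$ and $a=|I|-5$, with $a$ as large as $(n-10)/2$. You give no evidence for coefficientwise positivity after the shift $x=n-|I|-1+s$, and that property is far stronger than what is needed.

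The specific reductions you rely on also fail:
\begin{itemize}
\item The partitions you name give quotient matrices of sizes $10$, $9$ and $8$, so $g-f$ is not the right object. Also, $f(\rho_1)<0$ is implied by a global polynomial inequality, not equivalent to it.
\item The pendant blocks do not behave identically. $\Gamma^{**}_{n,|I|}$ has only $a-1$ pendant pairs. In $\Gamma^{*}_{n,|I|}$ the pendant-carrying clique vertices are joined to $V_5,V_6$, whose neighbourhoods differ from those of $u_{|I|-4},u_{|I|-3}$ in $\Gamma_{n,|I|}$.
\item Factors such as $x^2+x-1$ coming from pendant pairs occur in the full characteristic polynomial, not in a quotient polynomial, so there is nothing to cancel.
\end{itemize}

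Your fallback also fails as stated. Equal edge counts rule out Lemma~\ref{addedges}, and the natural rewirings toward $\Gamma_{n,|I|}$ are not monotone. Write $y_i$ (resp.\ $x_i$) for the Perron entry of $\Gamma^{**}_{n,|I|}$ on $V_i'$ (resp.\ of $\Gamma^{*}_{n,|I|}$ on $V_i$).
\begin{itemize}
\item From $\Gamma^{**}_{n,|I|}$, one vertex of $V_2'$ trades its neighbour in $V_6'$ for the vertex of $V_7'$, while both vertices of $V_3'$ trade $V_7'$ for $V_6'$. Since $y_7<y_6$, the first trade is downhill, and doing the second trade first isolates the vertex of $V_7'$.
\item From $\Gamma^{*}_{n,|I|}$, the edge between $V_3$ and $V_7$ has to move to $V_8$, although $\rho(x_7-x_8)=x_3>0$.
\end{itemize}

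The missing idea is to evaluate each composite rewiring at once. Use a Rayleigh quotient, or the identity $(\rho(G_1)-\rho(G_2))\,\mathbf{z}^{\top}\mathbf{y}=\mathbf{z}^{\top}(A(G_1)-A(G_2))\mathbf{y}$ for the Perron vectors $\mathbf{z},\mathbf{y}$ of $G_1,G_2$. The signs come from entry inequalities read off the eigen-equations.
\begin{itemize}
\item For $\Gamma^{**}_{n,|I|}$, the paper proves $y_3<y_2<2y_3$ and $y_7<y_6$, using $n\ge 2|I|$. The rewiring above then gains $2(2y_3-y_2)(y_6-y_7)>0$ in the quadratic form.
\item For $\Gamma^{*}_{n,|I|}$ with $a\ge 2$, the paper splits on $x_5$ versus $x_7$. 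If $x_5<x_7$, fix $v_0\in V_1$ and replace the edges between $V_5$ and $V_1\setminus\{v_0\}$ by edges to $V_7$. This is a genuine Lemma~\ref{perron} move and produces $\Gamma^{**}_{n,|I|}$.
\item If $x_5\ge x_7$, a one-vector Rayleigh quotient can be negative. The paper uses both Perron vectors, offsetting the downhill part via $x_8-x_7=-x_3/\rho(\Gamma^{*}_{n,|I|})$ together with the eigen-equation of the new graph.
\end{itemize}
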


\begin{figure}[htp]
\centering
\begin{tikzpicture}

\draw (6,-0.8) ellipse (0.65 and 0.17);
\node at (6, -0.4) {$V_1$};
\draw (7.5,-0.8) ellipse (0.65 and 0.17);
\node at (7.5, -0.4) {$V_2$};
\node at (8.5, -0.4) {$V_3$};
\draw (5.5,-3.1) ellipse (0.65 and 0.17);
\node at (5.5, -3.5) {$V_4$};
\draw (7,-0.8) ellipse (2 and 1);
\filldraw [black] (5.5,-0.8) circle (2pt);
\filldraw [black] (5.8,-0.8) circle (0.8pt);
\filldraw [black] (6.0,-0.8) circle (0.8pt);
\filldraw [black] (6.2,-0.8) circle (0.8pt);
\filldraw [black] (6.5,-0.8) circle (2pt);
\filldraw [black] (7.0,-0.8) circle (2pt);
\filldraw [black] (7.3,-0.8) circle (0.8pt);
\filldraw [black] (7.5,-0.8) circle (0.8pt);
\filldraw [black] (7.7,-0.8) circle (0.8pt);
\filldraw [black] (8.0,-0.8) circle (2pt);
\filldraw [black] (8.5,-0.8) circle (2pt);

\filldraw [black] (5.0,-3.1) circle (2pt);
\draw  [black](5.0,-3.1)--(5.5,-0.8);
\filldraw [black] (5.3,-3.1) circle (0.8pt);
\filldraw [black] (5.5,-3.1) circle (0.8pt);
\filldraw [black] (5.7,-3.1) circle (0.8pt);
\filldraw [black] (6.0,-3.1) circle (2pt);
\draw  [black](6.0,-3.1)--(6.5,-0.8);
\filldraw [black] (6.6,-3.1) circle (2pt);
\node at (6.6, -3.4) {$V_5$};
\draw  [black](6.6,-3.1)--(5.5,-0.8);
\draw  [black](6.6,-3.1)--(6.5,-0.8);
\draw  [black](6.6,-3.1)--(8.5,-0.8);
\filldraw [black] (7.2,-3.1) circle (2pt);
\node at (7.2, -3.4) {$V_6$};
\draw  [black](7.2,-3.1)--(5.5,-0.8);
\draw  [black](7.2,-3.1)--(6.5,-0.8);
\draw  [black](7.2,-3.1)--(7.0,-0.8);
\draw  [black](7.2,-3.1)--(8.0,-0.8);
\filldraw [black] (7.8,-3.1) circle (2pt);
\node at (7.8, -3.4) {$V_7$};
\draw  [black](7.8,-3.1)--(7.0,-0.8);
\draw  [black](7.8,-3.1)--(8.0,-0.8);
\draw  [black](7.8,-3.1)--(8.5,-0.8);
\filldraw [black] (8.4,-3.1) circle (2pt);
\node at (8.4, -3.4) {$V_8$};
\draw  [black](8.4,-3.1)--(8.0,-0.8);
\draw  [black](8.4,-3.1)--(7.0,-0.8);
\filldraw [black] (9.0,-3.1) circle (2pt);
\node at (9, -3.4) {$V_9$};
\draw  [black](9.0,-3.1)--(8.5,-0.8);
\node at (7, 0.4) {$K_{n-|I|}$};
\node at (7, -3.96) {$\Gamma_{n, |I|}^* (|I|\ge6)$};

\draw (12,-0.8) ellipse (2 and 1);
\draw (10.75,-0.8) ellipse (0.65 and 0.17);
\node at (10.75,-0.4) {$V_1'$};
\draw (12.25,-0.8) ellipse (0.65 and 0.17);
\node at (12.25,-0.4) {$V_2'$};
\draw (13.5,-0.8) ellipse (0.4 and 0.13);
\node at (13.4,-0.4) {$V_3'$};
\draw (10.5,-3.1) ellipse (0.65 and 0.17);
\node at (10.5,-3.5) {$V_4'$};
\draw (11.75,-3.1) ellipse (0.4 and 0.13);
\node at (11.75,-3.5) {$V_5'$};
\draw (13.75,-3.1) ellipse (0.4 and 0.13);
\node at (13.75,-3.5) {$V_8'$};
\filldraw [black] (10.25,-0.8) circle (2pt);
\filldraw [black] (10.55,-0.8) circle (0.8pt);
\filldraw [black] (10.75,-0.8) circle (0.8pt);
\filldraw [black] (10.95,-0.8) circle (0.8pt);
\filldraw [black] (11.25,-0.8) circle (2pt);
\filldraw [black] (11.75,-0.8) circle (2pt);
\filldraw [black] (12.05,-0.8) circle (0.8pt);
\filldraw [black] (12.25,-0.8) circle (0.8pt);
\filldraw [black] (12.45,-0.8) circle (0.8pt);
\filldraw [black] (12.75,-0.8) circle (2pt);
\filldraw [black] (13.25,-0.8) circle (2pt);
\filldraw [black] (13.75,-0.8) circle (2pt);

\filldraw [black] (10.00,-3.1) circle (2pt);
\draw  [black](10.00,-3.1)--(10.25,-0.8);
\filldraw [black] (10.3,-3.1) circle (0.8pt);
\filldraw [black] (10.5,-3.1) circle (0.8pt);
\filldraw [black] (10.7,-3.1) circle (0.8pt);
\filldraw [black] (11.0,-3.1) circle (2pt);
\draw  [black](11.0,-3.1)--(11.25,-0.8);
\filldraw [black] (11.5,-3.1) circle (2pt);
\draw  [black](11.5,-3.1)--(10.25,-0.8);
\draw  [black](11.5,-3.1)--(11.25,-0.8);
\draw  [black](11.5,-3.1)--(11.75,-0.8);
\draw  [black](11.5,-3.1)--(12.75,-0.8);
\draw  [black](11.5,-3.1)--(13.25,-0.8);
\filldraw [black] (12,-3.1) circle (2pt);
\draw  [black](12,-3.1)--(10.25,-0.8);
\draw  [black](12,-3.1)--(11.25,-0.8);
\draw  [black](12,-3.1)--(11.75,-0.8);
\draw  [black](12,-3.1)--(12.75,-0.8);
\draw  [black](12,-3.1)--(13.75,-0.8);
\filldraw [black] (12.5,-3.1) circle (2pt);
\node at (12.5,-3.5) {$V_6'$};
\draw  [black](12.5,-3.1)--(11.75,-0.8);
\draw  [black](12.5,-3.1)--(12.75,-0.8);
\filldraw [black] (13,-3.1) circle (2pt);
\node at (13,-3.5) {$V_7'$};
\draw  [black](13,-3.1)--(13.25,-0.8);
\draw  [black](13,-3.1)--(13.75,-0.8);
\filldraw [black] (13.5,-3.1) circle (2pt);
\draw  [black](13.5,-3.1)--(13.25,-0.8);
\filldraw [black] (14,-3.1) circle (2pt);
\draw  [black](14,-3.1)--(13.75,-0.8);
\node at (12, 0.4) {$K_{n-|I|}$};
\node at (12, -3.96) {$\Gamma_{n, |I|}^{**} (|I|\ge6)$};
\end{tikzpicture}
\caption{The structures of $\Gamma_{n, |I|}^*$ and $\Gamma_{n, |I|}^{**}$.}
\label{flem11}
\end{figure}
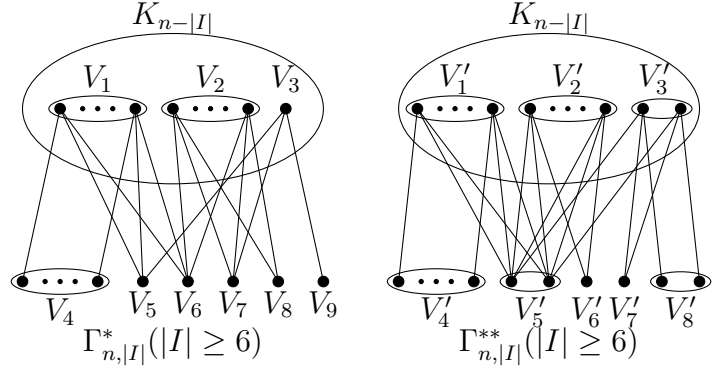

Let $\mathbb{G}_n$ be the class of $n$-vertex split graphs of type $(K,I)$ each of which is  connected, $K_{1,4}$-free, and  not Hamiltonian.

\begin{Lemma}\label{DI1}
Let $G=(K,I)$ be a connected $K_{1,4}$-free split graph of order $n\ge 2|I|$ that is not Hamiltonian but with maximum spectral radius, where $|I|\ge4$.
If there exists  a vertex of $K$  such that its degree in $I$ is equal to $3$, then there is a vertex of $I$ with degree $1$.
\end{Lemma}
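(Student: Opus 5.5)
Let $v\in K$ be a vertex with exactly three neighbours $u_1,u_2,u_3$ in $I$. Since $G$ is $K_{1,4}$-free and $\{u_1,u_2,u_3\}$ is independent, every vertex of $K\setminus\{v\}$ must be adjacent to at least one of $u_1,u_2,u_3$. Otherwise such a vertex $w$ together with $u_1,u_2,u_3$ would give four pairwise nonadjacent neighbours of $v$. Hence $K=N_G(u_1)\cup N_G(u_2)\cup N_G(u_3)$. The idea is to argue by contradiction. Assume every vertex of $I$ has degree at least $2$. Then either we build a Hamiltonian cycle directly, or we find a local modification of $G$ that stays in $\mathbb{G}_n$ and strictly increases $\rho$. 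The latter contradicts the maximality of $G$.

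The main tool is the equivalent form of Lemma \ref{perron} stated after it. Take the Perron vector $\mathbf{x}$ and order the $u_i$ so that $x_{u_1}\ge x_{u_2}\ge x_{u_3}$. Pick any vertex $u\in I\setminus\{u_1,u_2,u_3\}$, which exists because $|I|\ge4$. Since $u$ is not adjacent to $v$, it has all its neighbours in $K\setminus\{v\}$, and these are covered by $N(u_1)\cup N(u_2)\cup N(u_3)$. Now consider a vertex $u'\in I$ of minimum Perron entry among those of degree at least $2$. Move all but one of its edges to a vertex of $I$ with larger Perron entry; these are new edges, not already present at that vertex. By Lemma \ref{perron} this strictly increases $\rho$, and $u'$ becomes a vertex of degree $1$, so the new graph is non-Hamiltonian. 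It is also still connected, since $u'$ keeps one edge.

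The real work is to check that the modified graph is still $K_{1,4}$-free. I would first choose the recipient as the vertex of $I$ whose neighbourhood most contains that of $u'$; the natural candidate is one of $u_1,u_2,u_3$ sharing a neighbour with $u'$. A $K_{1,4}$ created in $G'$ must be centred at a vertex $w\in K$ that gained a new $I$-neighbour. Since $w\in N(u_1)\cup N(u_2)\cup N(u_3)$ and $G$ is $K_{1,4}$-free, $w$ has at most three $I$-neighbours. I would therefore case-split on how many of $u_1,u_2,u_3$ the vertex $w$ sees. Where transferring to a single vertex fails, the fallback is to transfer the edges of $u'$ onto $w$'s existing $I$-neighbours, so that $N_{G'}(w)\cap I$ does not grow. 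If no valid transfer exists, that rigidity should force each of $N(u_1),N(u_2),N(u_3)$ to be structured enough that $u_1\,v\,u_2$ and $u_3$ can be threaded through the clique to form a Hamiltonian cycle, as in Claim \ref{C1}. This uses $|K|\ge|I|$, i.e. $n\ge2|I|$, to have enough clique vertices to route between consecutive vertices of $I$.

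I expect the main obstacle to be this preservation of $K_{1,4}$-freeness in the edge transfer, together with the residual case. That case is when every vertex of $K$ already has three $I$-neighbours, so no move is available. There I would show Hamiltonicity directly by ordering $I$ cyclically along overlapping neighbourhoods and joining them through disjoint clique paths, so that $G$ cannot be non-Hamiltonian.
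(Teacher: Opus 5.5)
There is a genuine gap. The proposal is a plan whose two load-bearing steps are never carried out, and one of them is false as stated.

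First, you never obtain a vertex of $I$ of degree exactly $2$, which is what makes the edge moves tractable. The paper gets it immediately. If every vertex of $I$ had degree at least $3$, then, since $|K|\ge|I|\ge 4$, deleting any two vertices leaves a connected graph. So $G$ would be $3$-connected, hence Hamiltonian by Theorem~\ref{Th2}. Your ``residual case'', settled by ordering $I$ cyclically and routing through disjoint clique paths, is exactly the content of that theorem and is not an argument. With a degree-$2$ vertex $u_i$, the paper only ever shifts one edge $u_iz$ to a recipient not adjacent to $z$. Shifting ``all but one'' edges of a higher-degree vertex to a single recipient will in general hit common neighbours. There Lemma~\ref{perron} does not apply, since deleting those edges lowers the Rayleigh quotient. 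You also never check that the recipient does not become adjacent to all of $K$, which would change the type $(K,I)$ and leave $\mathbb{G}_n$.

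Second, the dichotomy ``either an admissible local transfer exists or $G$ is Hamiltonian'' is false. Take $|I|=4$, $n\ge 8$, $I=\{a,b,c,d\}$ with $N(a)=N(b)=\{v_0,v_1\}$, $N(c)=K\setminus\{v_1\}$, $N(d)=K\setminus\{v_0\}$. This graph is connected and $K_{1,4}$-free, has $d_I(v_0)=3$, and all its $I$-degrees are at least $2$. It is not Hamiltonian, because a Hamiltonian cycle would contain the $4$-cycle $v_0av_1bv_0$. Yet every move of your type fails:
\begin{itemize}
\item Shifting $av_1$ to $c$ or $av_0$ to $d$ (similarly for $b$) makes the recipient complete to $K$, and the other shifts go to vertices already adjacent.
\item Adding an edge at $c$ or $d$ makes it complete to $K$.
\item Adding an edge from $a$ or $b$ into $K\setminus\{v_0,v_1\}$, or shifting an edge of $c$ or $d$ onto $a$ or $b$, creates a Hamiltonian cycle.
\end{itemize}
This is precisely the configuration reached in Case~1.1 of the paper's proof. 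It is eliminated only by a global comparison, $G-av_1-bv_0+\{bv: v\in K\setminus\{v_0,v_1\}\}\cong\Gamma_{n,4}$. That step uses a Rayleigh-quotient estimate requiring $x_v\ge x_{v_0}$ for $v\in K\setminus\{v_0,v_1\}$, plus a direct computation for $n=8$.

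More broadly, the $K_{1,4}$-freeness and non-Hamiltonicity checks you defer make up essentially the whole proof. They rely on choosing $v_0$ to maximize $x_{v_0}$ among vertices with $d_I=3$, and $u_i$ of minimum entry among degree-$2$ vertices. Those choices supply the Perron-entry comparisons that justify each individual move, and none of this is present in your proposal.
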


\begin{proof}
Let $I=\{u_1,\dots,u_{|I|}\}$. Denote by $\mathbf{x}$ the Perron vector of $G$.
Let $x_{v_0}=\max\{x_v: v\in K, d_I(v)=3\}$, and $N_I(v_0)=\{u_1,u_2,u_3\}$ with $x_{u_1}\ge x_{u_2}\ge x_{u_3}$. As $G$ is $K_{1,4}$-free, we have $\bigcup_{i=1}^3 N_G(u_i)=K$.

Suppose that $d_G(u)\ge2$ for any $u\in I$.
Then $G$ has a vertex of degree $2$ in $I$, as otherwise removing any two vertices results in a connected graph, so $G$ is $3$-connected, and by Theorem \ref{Th2}, $G$ is Hamiltonian, a contradiction. Choose $u_i\in I$ such that $x_{u_i}$
is minimum among all vertices of degree two.

For convenience, we write $\rho$ for $\rho(G)$.

\noindent
{\bf Case 1.}  $i\in\{1,2,3\}$.

Let $N_G(u_i)=\{v_0,v_1\}$. Let $\{u_1,u_2,u_3\}\setminus\{u_i\}=\{u_j,u_k\}$, where $1\le j<k\le3$ and $j,k\ne i$.

\noindent
{\bf Case 1.1.} $d_I(v_1)\ge2$ and $N_I(v_1)\setminus\{u_1,u_2,u_3\}\ne\emptyset$.

Let $u'\in N_I(v_1)\setminus\{u_1,u_2,u_3\}$ such that $x_{u'}=\min\{x_u: u\in N_I(v_1)\setminus\{u_1,u_2,u_3\}\}$.
As $\rho(x_{v_0}-x_{u_j})>x_{u_j}-x_{v_0}$ and $\rho>1$, we have $x_{v_0}>x_{u_j}$.
If $d_G(u')\ge3$, then
\begin{align*}
(1+\rho)\rho(x_{u'}-x_{u_i}) & =(1+\rho)\left(\sum_{v\in N_G(u')\setminus\{v_1\}}x_v-x_{v_0}\right)\\
&> 2x_{u'}+x_{u_k}+x_{v_0}-x_{u_i}-x_{u_j}>x_{u'}-x_{u_i},
\end{align*}
so $x_{u'}> x_{u_i}$. It follows that
 $x_{u'}\ge x_{u_i}$ with equality  only if $d_G(u')=2$.

\begin{Claim} \label{Fis}
$d_G(u')\ge3$, $N_I(v_1)\setminus\{u_1,u_2,u_3\}=\{u'\}$ and $N_G(u_j)\cap N_G(u_k)=\{v_0\}$.
%
%If $d_G(u')=2$, or
%$d_G(u')\ge3$ and $|N_I(v_1)\setminus\{u_1,u_2,u_3\}|=2$, or
%$d_G(u')\ge3$, $|N_I(v_1)\setminus\{u_1,u_2,u_3\}|=1$ and $|N_G(u_j)\cap N_G(u_k)|\ge2$, then $u'$ is not adjacent to all vertices of $K\setminus\{v_0\}$.
\end{Claim}

\begin{proof} Suppose that this is not true. Then
$d_G(u')=2$, or
$d_G(u')\ge3$ and $|N_I(v_1)\setminus\{u_1,u_2,u_3\}|=2$, or
$d_G(u')\ge3$, $|N_I(v_1)\setminus\{u_1,u_2,u_3\}|=1$ and $|N_G(u_j)\cap N_G(u_k)|\ge2$.
We show that in each such case, $u'$ is not adjacent to all vertices of $K\setminus\{v_0\}$.
This is obvious in the first case.
In the second case, it follows  as
otherwise, the vertex of $N_I(v_1)\setminus\{u_i,u'\}$ is adjacent to all vertices of $K\setminus\{v_0\}$ by the choice of $u'$, so $\rho(\rho+1)(x_{v_1}-x_{v_0})=\rho(2x_{u'}-x_{u_j}-x_{u_k})>2(x_{v_1}-x_{v_0})$, a contradiction.
Now we consider  the third case. If $v'$ is a common neighbor of  $u_j,u_k$ and $u'$,
then  $(1+\rho)(x_{v'}-x_{v_0})=x_{u'}-x_{u_i}>0$, so $x_{v'}>x_{v_0}$, a contradiction. This shows that  $u_j,u_k$ and $u'$ have no common neighbor in $G$, so
 $u'$ is not adjacent to all vertices of $K\setminus\{v_0\}$.

Note that $N_G(u_j)\cup N_G(u_k)\cup N_G(u')=K$.  Then  $G-u_iv_0+u'v_0\in\mathbb{G}_n$, and by Lemma \ref{perron}, we have $\rho(G-u_iv_0+u'v_0)>\rho$, a contradiction.
It now follows that  $d_G(u')\ge3$, $N_I(v_1)\setminus\{u_1,u_2,u_3\}=\{u'\}$ and $N_G(u_j)\cap N_G(u_k)=\{v_0\}$.
\end{proof}

\begin{Claim} \label{jk} 
$N_G(u_j)= K\setminus\{v_1\}$, $N_G(u_k)=\{v_0, v_1\}$ and $N_G(u')=K\setminus\{v_0\}$. 
\end{Claim}

\begin{proof}
Take $u_\ell\in\{u_1,u_2,u_3\}$ such that $u_\ell\notin N_G(v_1)$. 
We claim that $x_{u_\ell}\ge x_{u_i}$. It is obvious if $d_G(u_\ell)=2$.
If $d_I(v_1)=3$, then, by Claim \ref{Fis} and the choice of $v_0$, we have
$x_{u_\ell}\ge x_{u'}> x_{u_i}$. 
If $d_I(v_1)=2$ and $d_G(u_\ell)=3$, then, as $\rho(x_{v_1}-x_{u'})>x_{u'}-x_{v_1}$, we have $x_{v_1}>x_{u'}$, so
\[
(\rho+1)\rho(x_{u_\ell}-x_{u_i})=(\rho+1)\left(\sum_{v\in N_G(u_\ell)\setminus\{v_0\}}x_v-x_{v_1}\right)> 2x_{u_\ell}+x_{v_1}-x_{u_i}-x_{u'}>x_{u_\ell}-x_{u_i},
\] 
implying that $x_{u_\ell}>x_{u_i}$.
It follows that  $x_{u_\ell}\ge x_{u_i}$, as claimed. 

Suppose that $N_G(u_\ell)\ne K\setminus\{v_1\}$. Then $G-u_iv_1+u_\ell v_1\in\mathbb{G}_n$, and we have by Lemma \ref{perron} that $\rho(G-u_iv_1+u_\ell v_1)>\rho$, a contradiction.
So $N_G(u_\ell)= K\setminus\{v_1\}$. 
By Claim \ref{Fis}, we have $N_G(u_j)\cap N_G(u_k)=\{v_0\}$. 
Since $d_G(u_k)\ge2$ and $x_{u_j}\ge x_{u_k}$, we have $\ell=j$, and $N_G(u_k)=\{v_0, v_1\}$. 
As $G$ is $K_{1,4}$-free, we have $N_G(u')=K\setminus\{v_0\}$. 
\end{proof}

From Claim \ref{jk} and $A(G)\mathbf{x}=\rho\mathbf{x}$, we have $x_{u_i}=x_{u_k}$ and  $x_{v_0}=x_{v_1}$, then $\rho(x_{u_i}+x_{u_k})=2(x_{v_0}+x_{v_1})=4x_{v_0}$.

Now we show that $d_I(v)=2$ for any $v\in K\setminus\{v_0,v_1\}$. Otherwise,
assume that $v''\in K\setminus\{v_0,v_1\}$ with $d_I(v'')=3$. Let $u''\in N_I(v'')\setminus\{u_j,u'\}$. 
%such that $x_{v''}=\min\{x_v: v\in K\setminus\{v_0,v_1\}, d_I(v)=3\}$. Let $N_I(v'')=\{u_j,u',u_4\}$.
%As $d_G(u_4)\ge2$, there are at least two vertices of $K\setminus\{v_0,v_1\}$ whose degree in $I$ is $3$.
Note that $d_G(u'')\ge2$. Then $\rho x_{u''}\ge 2x_{v''}$ by Claim \ref{jk}. So
\[
\rho(1+\rho)(x_{v''}-x_{v_0})=\rho(x_{u''}+x_{u'}-x_{u_k}-x_{u_i})> 2x_{v''}+2x_{v''}+x_{v_1}-4x_{v_0}>4(x_{v''}-x_{v_0}),
\]
implying that $x_{v''}>x_{v_0}$, a contradiction.
% So we have
%\[
%0\ge(\rho^2(G)+\rho-4)(x_{v''}-x_{v_0})\ge x_{v_1}>0,
%\]
%a contradiction.

Thus,  $N_I(v)=\{u_j,u'\}$ for any $v\in K\setminus\{v_0,v_1\}$, implying that $|I|=4$. 
Let $G''=G-u_iv_1-u_kv_0+\{u_kv: v\in K\setminus\{v_0,v_1\}\}$. Note that $G''\cong\Gamma_{n,4}\in\mathbb{G}_n$.
By a direct calculation, we have $\Gamma_{8,4}=4.5722>4.46=\rho$, a contradiction. So $n\ge 9$, implying that $|K\setminus\{v_0,v_1\}|\ge3$. 
Fix $v''\in K\setminus\{v_0,v_1\}$. Then we have by Claim \ref{jk} that $x_v=x_{v''}$ for any $v\in K\setminus\{v_0,v_1\}$.
%Note that $\rho(x_{u'}-x_{u_i}-x_{u_k})\ge x_{v_1}+3x_{v''}-2x_{v_0}-2x_{v_1}= 3(x_{v''}-x_{v_0})$.
So $\rho(\rho+1)(x_{v''}-x_{v_0})=\rho(x_{u'}-x_{u_i}-x_{u_k})\ge 3(x_{v''}-x_{v_0})$, 
implying that $x_{v''}\ge x_{v_0}$.
Then
\begin{align*}
\rho(G'')-\rho& \ge\mathbf{x}^\top(A(G')-A(G))\mathbf{x}
 =2\sum_{v\in K\setminus\{v_0,v_1\}}x_{u_k}x_v-2x_{u_i}x_{v_1}-2x_{u_k}x_{v_0}\\
& =2x_{u_k}\left(\sum_{v\in K\setminus\{v_0,v_1\}}x_v-2x_{v_0}\right)
 \ge2x_{u_k}(3x_{v''}-2x_{v_0})
 >0,
\end{align*}
a contradiction.

\noindent
{\bf Case 1.2.} $d_I(v_1)=1$, or $d_I(v_1)\ge2$ and $N_I(v_1)\subseteq\{u_1,u_2,u_3\}$.

Suppose first that $d_I(v_1)=1$. Let $v_2$ be a neighbor of $u_k$ different from $v_0$. Then
\[
\rho (\rho+1)(x_{v_2}-x_{v_1})\ge \rho(x_{u_k}-x_{u_i})\ge x_{v_2}-x_{v_1},\]
%so
%\[
%\rho (\rho+1)(x_{v_2}-x_{v_1})\ge x_{v_2}-x_{v_1},
%\]
implying that $x_{v_2}\ge x_{v_1}$, from which, we have  $x_{u_k}\ge x_{u_i}$. So $i=3$, $k=2$ and $j=1$.
As $G$ is $K_{1,4}$-free, $d_I(v_1)=1$ and $d_G(u_3)=2$, we have $d_I(v)\le2$ for any $v\in K\setminus\{v_0,v_1\}$.
Together with $x_{u_1}\ge x_{u_2}$ and $|I|\ge4$, we have $N_G(u_2)\ne K\setminus \{v_1\}$.
So  $G-u_3v_1+u_2v_1\in\mathbb{G}_n$, and by Lemma \ref{perron}, we have $\rho(G-u_3v_1+u_2v_1)>\rho$, a contradiction.

Suppose next that $d_I(v_1)\ge2$ and $N_I(v_1)\subseteq\{u_1,u_2,u_3\}$.
We claim that $x_{u}\ge x_{u_i}$ for any $u\in I\setminus\{u_1,u_2,u_3\}$, where the equality holds only if $d_G(u)=2$. It is obvious if $d_G(u)=2$.
Note that if $N_I(v_1)\setminus\{u_i,u_k\}\ne\emptyset$, then $\rho(x_{v_1}-\sum_{u\in N_I(v_1)\setminus\{u_i,u_k\}}x_u)=\rho(x_{v_1}-x_{u_j})>x_{u_j}-x_{v_1}$, i.e., $x_{v_1}>x_{u_j}$. Similarly, we have $x_{v_0}>x_{u_j}$.
If $d_G(u')\ge3$ for some $u'\in I\setminus\{u_1,u_2,u_3\}$, then
\begin{align*}
(1+\rho)\rho(x_{u'}-x_{u_i})& =(1+\rho)\left(\sum_{v\in N_G(u')}x_v-x_{v_0}-x_{v_1}\right)\\
& > 2x_{u_k}+3x_{u'}+x_{v_0}+x_{v_1}-2x_{u_i}-x_{u_j}-x_{u_k}-\sum_{u\in N_I(v_1)\setminus\{u_i\}}x_u\\
& >3x_{u'}-2x_{u_i},
\end{align*}
so $(\rho^2(G)+\rho-2)(x_{u'}-x_{u_i})>x_{u'}>0$, as claimed.
As $|I|\ge 4$, we can take $u^*\in I\setminus\{u_1,u_2,u_3\}$. Note that $G-u_iv_1+u^*v_1\in \mathbb{G}_n$. By Lemma \ref{perron}, we have  $\rho(G-u_iv_1+u^*v_1)>\rho$, a contradiction.

\noindent
{\bf Case 2.} $i\ne 1,2,3$.

Assume that $i=4$.
Let $N_G(u_4)=\{v_1,v_2\}$. Then  $|N_I(v_a)\cap\{u_1,u_2,u_3\}|=1$ for $a=1,2$. Otherwise,
assume that  $|N_I(v_1)\cap\{u_1,u_2,u_3\}|=2$.  Let $u_j=\{u_1,u_2,u_3\}\setminus N_G(v_1)$. Then  $x_{u_j}\ge x_{u_4}$ by the choice of $v_0$. If $N_G(u_j)\ne K\setminus\{v_1\}$, then we can check that
$G-u_4v_1+u_jv_1\in\mathbb{G}_n$, and by Lemma \ref{perron}, we have $\rho(G-u_4v_1+u_jv_1)>\rho$, a contradiction. So $N_G(u_j)=K\setminus\{v_1\}$. Then we can choose $u_k\in N_I(v_1)\setminus\{u_4\}$ and $u_k\notin N_G(v_2)$.
Let $u_r=\{u_1,u_2,u_3\}\setminus\{u_j,u_k\}$.
Suppose that $u_r\in N_I(v_2)$. Then $x_{u_k}\ge x_{u_4}$ by the choice of $v_0$.
If $N_G(u_k)\ne K\setminus\{v_2\}$, then $G-u_4v_2+u_kv_2\in\mathbb{G}_n$ and we have by Lemma \ref{perron} that $\rho(G-u_4v_2+u_kv_2)>\rho$, a contradiction. So $N_G(u_k)= K\setminus\{v_2\}$. Then $|I|\ge5$, as otherwise $G$ is Hamiltonian, which is a contradiction.
Take $u'\in I\setminus\{u_1,\dots,u_4\}$.
We claim that $x_{u'}\ge x_{u_4}$. It is obvious if $d_G(u')=2$.
Note that $x_{v_1}>x_{u_r}$.
If $d_G(u')\ge3$, then $(\rho+1)\rho(x_{u'}-x_{u_4})> x_{v_0}+x_{v_1}+2x_{u_j}+2x_{u_k}+3x_{u'}-2x_{u_r}-2x_{u_4}>2(x_{u'}-x_{u_4})$, so $x_{u'}>x_{u_4}$ as desired.
So $G-u_4v_2+u'v_2\in \mathbb{G}_n$ and we have by Lemma \ref{perron} that $\rho(G-u_4v_2+u'v_2)>\rho$, a contradiction.
Thus $u_r\notin N_I(v_2)$. Note that $G+u_rv_2\in \mathbb{G}_n$. By Lemma \ref{addedges}, we have $\rho(G+u_rv_2)>\rho$, a contradiction.

%As $\rho(x_{u_k}-x_{u_4})\ge x_{v_0}-x_{v_2}$, $(\rho+1)(x_{v_0}-x_{v_2})>x_{u_k}-x_{u_4}$ if $d_I(v_2)=2$ and $x_{v_0}\ge x_{v_2}$ if $d_I(v_2)=3$, we have
%\[
%(\rho^2(G)+\rho-1)(x_{u_k}-x_{u_4})\ge0,
%\]
%implying that $x_{u_k}\ge x_{u_4}$.
%Note that $G-u_4v_2+u_kv_2\in \mathbb{G}_n$. So we have by Lemma \ref{perron} that $\rho(G-u_4v_2+u_kv_2)>\rho$, a contradiction.

\noindent
{\bf Case 2.1.}
 $N_I(v_1)\cap\{u_1,u_2,u_3\}=N_I(v_2)\cap\{u_1,u_2,u_3\}$.

Assume that $N_I(v_1)\cap\{u_1,u_2,u_3\}=\{u_r\}$.
Let $1\le s<t\le3$ with $s,t\ne r$.
Then we claim that $x_{u_4}\le x_{u_s}$. It is obvious that $d_G(u_s)=2$.
Suppose that $d_G(u_s)\ge3$. We show that $x_{v_1}$, $x_{v_2}\le x_{v_0}$. 
Assume that $d_I(v_1)\le d_I(v_2)$. 
It is obvious that $d_I(v_1)=3$ by the choice of $v_0$. 
If $d_I(v_1)=2$, then $x_{v_1}+x_{v_2}-2x_{v_0}\le x_{v_1}-x_{v_0}$ if $d_I(v_2)=3$, and $x_{v_1}+x_{v_2}-2x_{v_0}=2(x_{v_1}-x_{v_0})$ otherwise. Note that 
$(\rho+1)\rho(x_{v_1}-x_{v_0})=\rho(x_{u_4}-x_{u_s}-x_{u_t})<x_{v_1}+x_{v_2}-2x_{v_0}$.
So we have $x_{v_1}\le x_{v_0}$, and $x_{v_2}\le x_{v_0}$. 
Thus 
\begin{align*}
(\rho+1)\rho(x_{u_4}-x_{u_s})& =(\rho+1)\left(x_{v_1}+x_{v_2}-x_{v_0}-\sum_{v\in N_G(u_s)\setminus\{v_0\}}x_v\right) \le (\rho+1)\left(x_{v_0}-\sum_{v\in N_G(u_s)\setminus\{v_0\}}x_v\right)\\
& \le x_{u_t}+x_{u_r}-x_{u_s}-x_{v_0} <0
\end{align*}
as $x_{u_t}\le x_{u_s}$ and $x_{u_r}<x_{v_0}$.
So $x_{u_4}<x_{u_s}$, as claimed.
Then $G-u_4v_1+u_sv_1\in\mathbb{G}_n$ and we have by Lemma \ref{perron} that $\rho(G-u_4v_1+u_sv_1)>\rho$, a contradiction.

%Suppose first that $d_I(v_1)=d_I(v_2)=2$.
%If $N_I(v_k)\cap\{u_1,u_2,u_3\}=\{u_1\}$ for $k=1,2$, then $(1+\rho)\rho(x_{u_4}-x_{u_2})=(1+\rho)(x_{v_1}+x_{v_2}-x_{v_0}-\sum_{v\in N_G(u_2)\setminus\{v_0\}}x_v)\le2x_{u_4}-2x_{u_2}-x_{u_3}$, i.e.,
%\[
%(\rho^2(G)+\rho-2)(x_{u_4}-x_{u_2})<0,
%\]
%so $x_{u_4}<x_{u_2}$.
%Similarly, if $N_I(v_k)\cap\{u_1,u_2,u_3\}=\{u_2\}$ for $k=1,2$, then $x_{u_4}<x_{u_3}\le x_{u_1}$, and if $N_I(v_k)\cap\{u_1,u_2,u_3\}=\{u_3\}$, then
%$x_{u_4}<x_{u_1}$
%%Similarly, if $\{v_1,v_2\}\subseteq B_{jj}$, we can obtain that $x_{u_4}<x_{u_1}$ (we can check that $x_{u_4}<x_{u_3}$ if $j=2$).
%Let $G'=G-u_4v_1+u_2v_1$ if $N_I(v_k)\cap\{u_1,u_2,u_3\}=\{u_1\}$, and $G'=G-u_4v_1+u_1v_1$ otherwise.
%Then it is easy to see that $G'\in\mathbb{G}_n$, and by Lemma \ref{perron}, we have $\rho(G')>\rho$, a contradiction.
%
%
%Suppose next that  $d_I(v_k)=3$ for some $k=1,2$.
%Assume that $d_I(v_1)=3$. Let $N_I(v_1)=\{u_r,u_4,u_5\}$ with $r\in\{1,2,3\}$. Similarly, we have $x_{u_4}\le x_{u_5}$.
%As $G$ is $K_{1,4}$-free and Lemma \ref{perron}, $K\setminus N_G(u_r)\subset N_G(u_5)$.
%Note that $2x_{u_4}\le x_{u_4}+x_{u_5}\le x_{u_s}+x_{u_t}\le 2x_{u_s}$, where $1\le s<t\le3$ and $s,t\ne r$, so $x_{u_4}\le x_{u_s}$.Note that $G-u_4v_1+u_sv_1\in\mathbb{G}_n$, and by Lemma \ref{perron}, we have $\rho(G-u_4v_1+u_sv_1)>\rho$, a contradiction.

\noindent
{\bf Case 2.2.} $N_I(v_1)\cap\{u_1,u_2,u_3\}\ne N_I(v_2)\cap\{u_1,u_2,u_3\}$.

Assume that $N_I(v_1)\cap\{u_1,u_2,u_3\}=\{u_s\}$ and $N_I(v_2)\cap\{u_1,u_2,u_3\}=\{u_t\}$ with  $1\le s<t\le3$.

If $t=2$ and $N_G(u_2)=K\setminus\{v_1\}$, then $s=1$, and $N_G(u_1)=K\setminus\{v_2\}$ as $x_{u_1}\ge x_{u_2}$.
We claim that $x_{u_3}\ge x_{u_4}$.
Note that 
\[
\rho(x_{u_3}-x_{u_4})\ge 2x_{v_0}-x_{v_1}-x_{v_2}.
\] 
If $d_I(v_1)=3$ and $d_I(v_2)=3$, then $\rho(x_{u_3}-x_{u_4})\ge 0$ by the choice of $v_0$, so $x_{u_3}\ge x_{u_4}$.
Note that $(\rho+1)(2x_{v_0}-x_{v_1}-x_{v_2})>2(x_{u_3}-x_{u_4})$ if $d_I(v_1)=d_I(v_2)=2$, and $(\rho+1)(2x_{v_0}-x_{v_1}-x_{v_2})>x_{u_3}-x_{u_4}$ otherwise. So $(\rho^2+\rho-2)(x_{u_3}-x_{u_4})>0$ or $(\rho^2+\rho-1)(x_{u_3}-x_{u_4})>0$, implying that $x_{u_3}>x_{u_4}$, as claimed.
Note that $G-u_4v_1+u_3v_1\in \mathbb{G}_n$. Then by Lemma \ref{perron}, we have $\rho(G-u_4v_1+u_3v_1)>\rho$, a contradiction. 

Thus $t\ne2$ or $t=2$ and $N_G(u_2)\ne K\setminus\{v_1\}$.
We claim that $x_{u_4}<x_{u_2}$.
If $s=2$ or $t=2$, then $(1+\rho)\rho(x_{u_4}-x_{u_2})\le(1+\rho)(x_{v_\ell}-x_{v_0})$, where $\ell=1$ if $t=2$, and $\ell=2$ otherwise. Also, we have  $x_{v_l}-x_{v_0}\le0$ if $d_I(v_\ell)=3$, and  $(1+\rho)(x_{v_l}-x_{v_0})< x_{u_4}-x_{u_2}$ otherwise, so  $x_{u_4}<x_{u_2}$.
If $s\ne 2$ and $t\ne 2$, then $s=1$ and $t=3$, similarly, we have $x_{u_4}<x_{u_3}\le x_{u_2}$ as desired.
If $s=2$, then $N_G(u_2)\ne K\setminus\{v_2\}$ as otherwise $\rho(x_{u_2}-x_{u_1})>x_{v_1}>0$ which is a contradiction.
Let $G'=G-u_4v_1+u_2v_1$ if $s\ne2$, and  $G'=G-u_4v_2+u_2v_2$ otherwise. Then $G'\in\mathbb{G}_n$, and we have by Lemma \ref{perron} that $\rho(G')>\rho$, a contradiction.
\end{proof}

\begin{proof}[Proof of Theorem~\ref{N2}]
Suppose that $G=(K,I)$ is a graph in $\mathbb{G}_n$ with  maximum spectral radius.
Denote by $\mathbf{x}$ the  Perron vector of $G$.

Let  $I=\{u_1,\dots, u_{|I|}\}$. For any $v\in V(G)$, $d_G(v)\ge 1$.

If $|I|=1$, then $d_G(u_1)=1$, as otherwise $G$ contains a Hamiltonian cycle, a contradiction, so $G\cong \Gamma_{n,1}$.

Suppose that $|I|=2$. Assume that $d_G(u_1)\ge d_G(u_2)$.
If $d_G(u_2)\ge 2$, it is easy to see that $G$ contains a Hamiltonian cycle unless $d_G(u_1)=2$ and $N_G(u_1)=N_G(u_2)$, so $G\cong\Gamma_{n,2}'$, see Fig. \ref{fth}.
If $d_G(u_2)=1$, then  $d_G(u_1)\ge n-3$, as otherwise, $d_G(u_1)\le n-4$, so we can add an edge
$u_1v_1$ to $G$ for some vertex $v_1\in K\setminus N_G(u_1)$. Evidently,  $G+u_1v_1$ is a connected $K_{1,4}$-free split graph of type $(K, I)$, and as  $d_{G+u_1v_1}(u_2)=d_G(u_2)=1$,  it is not Hamiltonian. However, by Lemma \ref{addedges}, we have $\rho(G+u_1v_1)>\rho(G)$, a contradiction.
As $K$ is a maximum clique of $G$, $d_G(u_1)\le n-3$, so $d_G(u_1)=n-3$.
If $u_2$ is adjacent to one  vertex of $N_G(u_1)$, then $G\cong\Gamma_{n,2}$, and otherwise, $G\cong \Gamma_{n,2}''$, see Fig. \ref{f1n}. Thus, $G\cong\Gamma_{n,2}, \Gamma_{n,2}', \Gamma_{n,2}''$. 
By a direct calculation,  $\rho(\Gamma_{5,2}')=3>\rho(\Gamma_{5,2})=2.6855>\rho(\Gamma_{5,2}'')=2.6412$. 
It follows that $G\cong\Gamma_{n,2}'$ if $n=5$. By Lemma \ref{bbb}, we have $G\cong\Gamma_{n,2}$ if $n\ge6$.

Suppose in the following that $|I|\ge 3$.
Since $G$ is $K_{1,4}$-free and $I$ is an independent set of $G$,
 $d_I(v)\le 3$ for any vertex $v\in K$.

%
%\begin{Claim}\label{K14}
%If for any $v\in K$, $d_I(v)=2$, then $G$ is $K_{1,4}$-free.
%\end{Claim}
%\begin{proof}
%Suppose that $G$ contains $K_{1,4}$ as an induced subgraph.
%Then the center of $K_{1,4}$ must be in $K$, so there have at least two vertices with degree $1$ in $V(K_{1,4})$ that are in $K$, which are adjacent, a contradiction.
%\end{proof}

%For any vertex $v\in K$, $d_I(v)\le 3$
%By Claim \ref{Cd}, we will consider two cases separately.

\noindent{\bf Case 1.} $\forall v\in K$, $d_I(v)\le2$.

Let $u_1$ be a vertex in $I$ with $x_{u_1}=\max\{x_u: u\in I\}$. Let $K'=N_G(u_1)$ and $K''=K\setminus K'$.

\begin{Claim}\label{KK}
$d_G(u_1)\ge 2$, i.e., $|K'|\ge2$.
\end{Claim}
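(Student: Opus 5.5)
We argue by contradiction: suppose $d_G(u_1)=1$, so $K'=\{v\}$ for a single vertex $v\in K$. Since $x_{u_1}=\max\{x_u:u\in I\}$ and $\rho x_u=\sum_{w\in N_G(u)}x_w$ for every $u\in I$, the aim is to show that every vertex of $I$ has degree $1$, and then to improve $G$ by adding an edge. This contradicts the maximality of $\rho(G)$ in $\mathbb{G}_n$.

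\emph{Step 1: every vertex of $I$ has degree $1$.} Take any $u\in I$ with $d_G(u)\ge 2$. I compare $x_v$ with the entries of $\mathbf{x}$ on $N_G(u)$. For any $w\in K$, the eigen-equations at $v$ and $w$ give
\[
(\rho+1)(x_v-x_w)=\sum_{z\in N_I(v)}x_z-\sum_{z\in N_I(w)}x_z .
\]
In Case 1 each vertex of $K$ has at most two neighbours in $I$, so the right-hand side is at most $2x_{u_1}$. Also $\rho\ge |K|-1\ge n/2-1$, because $|K|=n-|I|\ge n/2$. Hence $x_v-x_w\le 2x_{u_1}/(\rho+1)$, which is small compared with $x_w$ itself, since $\rho x_w\ge\sum_{z\in K\setminus\{w\}}x_z$.

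Now compute
\[
\rho x_u=\sum_{w\in N_G(u)}x_w\ge 2\min_{w\in K}x_w\ge 2\Bigl(x_v-\tfrac{2x_{u_1}}{\rho+1}\Bigr)=2\rho x_{u_1}-\tfrac{4x_{u_1}}{\rho+1}.
\]
The last equality uses $x_v=\rho x_{u_1}$. For $\rho\ge 2$, i.e.\ $n$ not tiny, the right-hand side exceeds $\rho x_{u_1}$, so $x_u>x_{u_1}$. This contradicts the choice of $u_1$. The small cases $n\le 6$ (so $|K|\le 3$) I would check directly, or cover by the cruder bound $\rho>1$ together with $x_v\ge x_w-x_{u_1}/(\rho+1)$ when $N_I(v)=\{u_1\}$. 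Thus $d_G(u)=1$ for all $u\in I$.

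\emph{Step 2: the final contradiction.} Now $G$ is $K_{|K|}$ with $|I|$ pendant vertices attached, each vertex of $K$ carrying at most two of them. Since $|K|\ge |I|\ge 3$, there is a vertex $w\in K\setminus\{v\}$. Consider $G'=G+u_1w$. It is still a split graph of type $(K,I)$: $K$ remains a maximum clique, because $|K|\ge 3$ while any clique containing $u_1$ has size at most $3$. It is connected, and it is non-Hamiltonian because the other vertices of $I$ still have degree $1$ and $|I|\ge 3$. We also need $G'$ to be $K_{1,4}$-free: $w$ gains only the neighbour $u_1$, and $w$ has at most three pairwise nonadjacent neighbours ($u_1$, at most one other vertex of $I$ in this configuration or two giving three, and at most one vertex of $K$). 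If $d_I(w)=2$, choose instead $w$ with $d_I(w)\le 1$; such a $w$ exists unless all of $K\setminus\{v\}$ is saturated, and in that case we still have at most three pairwise nonadjacent neighbours. So $G'\in\mathbb{G}_n$, and Lemma~\ref{addedges} gives $\rho(G')>\rho(G)$, a contradiction.

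\emph{Main obstacle.} The step I expect to be hardest is Step 1, namely making the inequality $x_u>x_{u_1}$ rigorous uniformly for all $n\ge 2|I|\ge 6$. If the perturbation estimate is too weak for small $n$, I would fall back on Lemma~\ref{perron}. Specifically, pick a neighbour $w$ of such a $u$ with $w\ne v$, and move the edge $uw$ to $u_1w$ when $x_{u_1}\ge x_u$. The resulting graph keeps $u$ of degree at least $1$, and it stays in $\mathbb{G}_n$ provided non-Hamiltonicity is preserved, which is the point that needs checking. This again contradicts maximality.
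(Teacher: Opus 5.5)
Your Step 1 is correct but argues differently from the paper. The paper assumes some $u_0\ne u_1$ has $d_G(u_0)\ge 2$. It first notes that $u_0$ is not adjacent to the unique neighbour of $u_1$. It then moves all but one edge of $u_0$ to $u_1$, and obtains a contradiction from Lemma~\ref{perron} and the extremality of $G$ in $\mathbb{G}_n$.

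You instead combine $(\rho+1)(x_v-x_w)\le\sum_{z\in N_I(v)}x_z\le 2x_{u_1}$ (using $d_I\le 2$ in Case 1) with $x_v=\rho x_{u_1}$. This gives $\rho x_u\ge\bigl(2\rho-\frac{4}{\rho+1}\bigr)x_{u_1}>\rho x_{u_1}$, which contradicts the choice of $u_1$ directly. No new graph is built, membership in $\mathbb{G}_n$ never needs checking, and maximality of $\rho(G)$ is not used at all. The paper's preliminary observation is the trivial special case of your estimate.

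The estimate also holds uniformly. From $|K|\ge|I|\ge 3$ you get $\rho\ge|K|-1\ge 2$, hence $\rho(\rho+1)\ge 6>4$. So the small-case check and the Lemma~\ref{perron} fallback in your ``main obstacle'' paragraph are unnecessary.

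Step 2, however, contains a false statement. Suppose $d_I(w)=2$ in $G$, with $I$-neighbours $z_1,z_2$; these are pendant by Step 1. Then in $G+u_1w$ the vertex $w$ has four pairwise nonadjacent neighbours $u_1,z_1,z_2,y$, for any $y\in K\setminus\{v,w\}$ (such $y$ exists since $|K|\ge 3$). So your claim that ``in that case we still have at most three pairwise nonadjacent neighbours'' is wrong; this case must be ruled out, not absorbed.

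The missing line is the counting the paper uses. Every vertex of $I$ now has degree $1$, so $\sum_{y\in K}d_I(y)=|I|\le|K|$. Since $d_I(v)\ge 1$, you get $\sum_{y\in K\setminus\{v\}}d_I(y)\le |K|-1$, so some $w\in K\setminus\{v\}$ has $d_I(w)\le 1$. With this $w$, every vertex of $K$ still has at most two neighbours in $I$ in $G+u_1w$, which gives $K_{1,4}$-freeness.

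The rest of your Step 2 then goes through exactly as in the paper: $K$ remains a maximum clique, the graph is connected, it is non-Hamiltonian via the remaining pendant vertices, and Lemma~\ref{addedges} finishes.
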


\begin{proof}
Suppose to the contrary that $|K'|=1$.

First, we show that $d_G(u)=1$ for any $u\in I$. Suppose that this is not true. Then $d_G(u_0)\ge 2$ for some $u_0\in I\setminus\{u_1\}$. Assume that $K'=\{v_1\}$. Then $v_1\not\in N_G(u_0)$, as otherwise we have from $A(G)x=\rho(G)x$ that $\rho(G)x_{u_1}=x_{v_1}<\rho(G)x_{u_0}$, contradicting the choice of $u_1$.
Fix a vertex $v_0\in N_G(u_0)$. Let
\[
G'=G-\{u_0v: v\in N_G(u_0)\setminus\{v_0\}\}+\{u_1v: v\in N_G(u_0)\setminus\{v_0\}\}.
\]
Evidently, $G'$ is a connected $K_{1,4}$-free split graph of the type $(K,I)$. As  $d_{G'}(u_0)=1$, $G'$ is not a Hamiltonian. However, we have by Lemma \ref{perron} that  $\rho(G')>\rho(G)$, a contradiction. Indeed, we have $d_G(u)=1$ for any $u\in I$.

As $|K|\ge|I|$, there is a vertex $v'\in K''$ such that $d_I(v')\le 1$. It is obvious that
 $G+u_1v'\in\mathbb{G}_n$. By Lemma \ref{addedges}, we have $\rho(G+u_1v')>\rho(G)$, a contradiction.
%Thus, $|K'|\ge2$.
\end{proof}

Let $I'=\{u\in I\setminus\{u_1\}: d_G(u)\ge2\}$.

\begin{Claim}\label{Ione}
$|I'|=1$.
\end{Claim}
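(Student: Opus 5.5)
We prove the two inequalities $|I'|\ge 1$ and $|I'|\le 1$ separately, with $G\in\mathbb{G}_n$ extremal, $x_{u_1}$ maximal on $I$, and every $v\in K$ satisfying $d_I(v)\le 2$ (Case 1). The tools are the edge-shifting Lemma \ref{perron} and the edge-adding Lemma \ref{addedges}; the key point is that in Case 1 any graph produced stays $K_{1,4}$-free automatically, as long as no vertex of $K$ gets three neighbors in $I$.

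\emph{Step 1: $I'\neq\emptyset$.} Suppose every $u\in I\setminus\{u_1\}$ has degree $1$.
\begin{itemize}
\item If $K''$ contains a vertex $w$ with $d_I(w)\le 1$, then $G+u_1w$ is a connected split graph of type $(K,I)$ with $d_I\le 2$ everywhere. It is therefore $K_{1,4}$-free, and it is non-Hamiltonian because $|I|\ge 3$ leaves a pendant vertex. Lemma \ref{addedges} then contradicts maximality.
\item Otherwise every vertex of $K''$ has two pendant neighbours in $I$. Counting gives $|K|\ge |I|$, which forces some slack. If $|K''|\ge 1$, pick $w\in K''$ with pendant neighbours $u,u'$ and move the edge $uw$ to $u_1w$; if $d_G(u)=1$ this must be replaced by rerouting $u$ to another vertex. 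Comparing Perron entries ($x_{u_1}\ge x_u$), Lemma \ref{perron} gives a larger spectral radius while staying in $\mathbb{G}_n$.
\item If $K''=\emptyset$, then $u_1$ is adjacent to all of $K$. The vertices of $I\setminus\{u_1\}$ are pendant at distinct vertices of $K$, since $d_I\le 2$. When $|I|\ge 3$, moving one pendant vertex $u_3$ from its attachment $v$ onto $v$'s other side is not helpful, so instead we add an edge $u_2w$ with $w$ carrying no other pendant. This creates a degree-2 vertex while keeping $u_3$ pendant, and Lemma \ref{addedges} gives the contradiction.
\end{itemize}
I expect the bookkeeping here to be routine but fiddly.

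\emph{Step 2: $|I'|\le 1$.} Suppose $u_2,u_3\in I'$, and assume $x_{u_2}\ge x_{u_3}$ (relabel if necessary). The natural move is the one from the proof of Theorem \ref{N1}: keep one edge $u_3v_0$ and transfer all other edges $u_3w$ to either $u_1w$ or $u_2w$. This gives
\[
G'=G-\{u_3w:w\in N_G(u_3)\setminus\{v_0\}\}+\{u_aw:w\in N_G(u_3)\setminus\{v_0\}\},\qquad a\in\{1,2\}.
\]
Then $u_3$ is pendant, so $G'$ is non-Hamiltonian, and $d_I$ does not increase at any vertex except possibly where $u_a$ was already adjacent.
\begin{itemize}
\item The choice of $a$ is made so that $u_aw\notin E(G)$; this is exactly the hypothesis of the equivalent form of Lemma \ref{perron}.
\item We then choose $v_0$ among the common neighbours of $u_3$ and $u_a$, if any. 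This way each transferred $w$ loses $u_3$ and gains $u_a$, so $d_I(w)$ is unchanged. Consequently $G'$ is $K_{1,4}$-free with $d_I\le 2$, connected, and of type $(K,I)$, because $K$ is still a maximum clique.
\item If $N_G(u_3)\subseteq N_G(u_a)$ for both choices of $a$, then $N_G(u_3)\subseteq N_G(u_1)\cap N_G(u_2)$. Any vertex of this intersection would have $d_I\ge 3$, so the intersection is empty, which contradicts $d_G(u_3)\ge 2$.
\end{itemize}
Using $x_{u_a}\ge x_{u_3}$ (either $a=1$, or $a=2$ by our labelling), Lemma \ref{perron} gives $\rho(G')>\rho(G)$, a contradiction.

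\emph{Main obstacle.} The hardest part is guaranteeing that $G'$ is in $\mathbb{G}_n$, specifically that $G'$ stays non-Hamiltonian and connected and that $K$ remains a maximum clique. No vertex of $I$ may become adjacent to all of $K$; if $u_a$ would, we reduce to a smaller transfer. I would verify this first, handling the degenerate case where $u_a$ absorbs all of $K$ by keeping one extra edge at $u_3$ and using the pendant structure from Step 1.
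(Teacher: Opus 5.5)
Your overall plan matches the paper's: Lemma \ref{addedges} gives $|I'|\ge 1$, and an edge transfer that makes a second vertex of $I'$ pendant gives $|I'|\le 1$. Both steps, however, have real gaps.

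\textbf{Step 2.} With a single target $u_a$, Lemma \ref{perron} only moves edges $u_3w$ with $u_aw\notin E(G)$. So $u_3$ becomes pendant only if $|N_G(u_3)\cap N_G(u_a)|\le 1$, and putting $v_0$ among the common neighbours absorbs just one of them. Nothing prevents $u_3$ from having two neighbours with $N_I=\{u_1,u_3\}$ and two with $N_I=\{u_2,u_3\}$. Then both choices of $a$ leave $d_{G'}(u_3)\ge 2$, and you lose your only certificate of non-Hamiltonicity.

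The missing idea is the paper's mixed transfer. Keep $u_3v_0$, move $S_1=(N_G(u_3)\cap K'')\setminus\{v_0\}$ to $u_1$, and move $S_2=(N_G(u_3)\cap K')\setminus\{v_0\}$ to $u_2$. Each $w\in S_2$ has $N_I(w)=\{u_1,u_3\}$, so $u_2w\notin E(G)$. This is not one application of Lemma \ref{perron}; instead
\[
\rho(G')-\rho(G)\ge \mathbf{x}^\top(A(G')-A(G))\mathbf{x}=2\sum_{w\in S_1}(x_{u_1}-x_{u_3})x_w+2\sum_{w\in S_2}(x_{u_2}-x_{u_3})x_w\ge 0 .
\]
If equality held, $\mathbf{x}$ would be an eigenvector of $A(G')$ for $\rho(G)$. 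The eigen-equations at $u_1$ and $u_2$ would then give $\sum_{w\in S_1}x_w=\sum_{w\in S_2}x_w=0$, contradicting $d_G(u_3)\ge 2$.

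Your fallback for the maximum-clique condition (keep an extra edge at $u_3$) also destroys pendancy. Use the freedom in $v_0$ instead:
\begin{itemize}
\item $v_0\in N_G(u_3)\cap K''$ keeps $u_1$ non-complete to $K$.
\item $v_0\in N_G(u_3)\cap K'$ keeps $u_2$ non-complete to $K$.
\item If neither available choice works, then $K''\subseteq N_G(u_2)\cap N_G(u_3)$ and $K'\subseteq N_G(u_2)\cup N_G(u_3)$. By connectivity $I=\{u_1,u_2,u_3\}$, each vertex of $K$ is adjacent to exactly two of them, and none is complete to $K$. Such a $G$ is Hamiltonian, a contradiction.
\end{itemize}

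\textbf{Step 1.} This also fails as written.
\begin{itemize}
\item In the first bullet, if $K''=\{w\}$ then $G+u_1w$ contains the clique $K\cup\{u_1\}$, so it is not of type $(K,I)$.
\item The second bullet describes no valid move. Under the Step 1 hypothesis every $u\in I\setminus\{u_1\}$ is pendant, so moving $uw$ to $u_1w$ isolates $u$. The ``rerouting'' is unspecified, and $x_{u_1}\ge x_u$ says nothing about it. Merely adding $u_1w$ would give $d_I(w)=3$, which can create a $K_{1,4}$.
\item The third bullet's case $K''=\emptyset$ cannot occur, since $K$ is a maximum clique.
\end{itemize}
What works, and is the paper's argument, is your third-bullet move used uniformly. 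If every $v\in K$ had $d_I(v)=2$, then $2|K|\le (|K|-1)+(|I|-1)$, contradicting $|K|\ge|I|$. So some $v'\in K$ has $d_I(v')\le 1$. There are $|I|-1\ge 2$ pendant vertices, so pick a pendant $u^*\notin N_I(v')$. The graph $G+u^*v'$ keeps $d_I\le 2$, has $d(u^*)=2<|K|$, and still has a pendant vertex. Hence it lies in $\mathbb{G}_n$, and Lemma \ref{addedges} contradicts maximality.
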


\begin{proof}
Suppose first that $|I'|=0$. As $|K|\ge|I|$, we can find $v'\in K$ such that $d_I(v')\le 1$. Take $u^*\in I\setminus N_I(v')$ with $d_G(u^*)=1$. Then $G+u^*v'$ is a connected $K_{1,4}$-free split graph of type $(K, I)$, and is not Hamiltonian as $|I'|=0$ and $|I|\ge3$, and we have by Lemma \ref{addedges} that $\rho(G+u^*v')>\rho(G)$, a contradiction.
It follows that $|I'|\ge1$.

Suppose next that $|I'|\ge2$.
Let $u'\in I'$ with $x_{u'}=\max\{x_u: u\in I'\}$.
Suppose that for some $u''\in I'\setminus\{u'\}$, $N_G(u'')\subseteq N_G(u')$.
As $d_I(v)\le 2$ for any $v\in K$, we have  $N_G(u'')\subseteq K''$, so $|K''|\ge 2$.
Fix a vertex $v_1\in N_G(u'')$.
Let
\[
G'=G-\{u''v: v\in N_G(u'')\setminus \{v_1\}\}+\{u_1v: v\in N_G(u'')\setminus \{v_1\}\}.
 \]
It is a connected $K_{1,4}$-free split graph of type $(K, I)$, and it is not Hamiltonian as $d_{G'}(u'')=1$.
As $x_{u_1}\ge x_{u''}$, we have by Lemma \ref{perron} that $\rho(G')>\rho(G)$, a contradiction.
Thus, for any $u\in I'\setminus\{u'\}$,  $N_G(u)\nsubseteq N_G(u')$.
Take $u_0\in I'\setminus\{u'\}$.
%Let $v_0\in N_G(u_0)\setminus N_G(u')$.
Let $v_0\in N_G(u_0)\cap K''$ if $N_G(u_0)\cap K''\ne\emptyset$, and $v_0\in N_G(u_0)\cap K'$ otherwise.
As $x_{u_1}\ge x_{u_0}$, $K'\nsubseteq N_G(u_0)$ if $N_G(u_0)\cap K''\ne\emptyset$.
Then  $K'\cup (N_G(u_0)\cap K''\setminus\{v_0\})\ne K$ and $N_G(u')\cup (N_G(u_0)\cap K'\setminus\{v_0\})\ne K$.
Let
\[
G''=G-\{u_0v:v\in N_G(u_0)\setminus\{v_0\}\}+\{u_1v: v\in N_G(u_0)\cap K''\setminus\{v_0\}\}+\{u'v: v\in N_G(u_0)\cap K'\setminus\{v_0\}\}.
\]
It is a connected $K_{1,4}$-free split graph of type $(K, I)$, and  is not Hamiltonian as $d_{G'}(u_0)=1$.
As
\[
\rho(G'')-\rho(G)\ge \sum_{\substack{v\in N_G(u_0)\cap K''\\ v\ne v_0}}2(x_{u_1}-x_{u_0})x_v+\sum_{\substack{v\in N_G(u_0)\cap K'\\v\ne v_0}}2(x_{u'}-x_{u_0})x_v\ge0,
\]
we have $\rho(G'')=\rho(G)$, and hence $\mathbf{x}$ is also the Perron vector of $G''$.  Now
\[
(\rho(G'')-\rho(G))x_{u_1}=\sum_{v\in N_G(u_0)\cap K''\setminus\{v_0\}}x_v=0
 \]
\[
(\rho(G'')-\rho(G))x_{u'}=\sum_{v\in N_G(u_0)\cap K'\setminus\{v_0\}}x_v=0,
 \]
so $N_G(u_0)\cap K''\setminus \{v_0\}=\emptyset$ and $N_G(u_0)\cap K'\setminus\{v_0\}=\emptyset$, i.e., $N_G(u_0)=\{v_0\}$, a contradiction.
\end{proof}

%\noindent{\bf Case 1.1.} $|K''|=1$, say $K''=\{v_0\}$.
Let $I'=\{u_2\}$.

\begin{Claim}\label{11dk}
For any vertex $v\in K'$, $d_I(v)=2$.
\end{Claim}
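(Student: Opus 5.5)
The plan is to argue by contradiction using Lemma \ref{addedges}. Suppose some $v\in K'$ has $d_I(v)\le 1$. Since $v\in N_G(u_1)$, this means $N_I(v)=\{u_1\}$. I will add one edge from $v$ to a suitable vertex of $I$ and check that the new graph still lies in $\mathbb{G}_n$. Its spectral radius is then strictly larger, contradicting the maximality of $\rho(G)$.

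The natural candidate is $G+u_2v$. First, $u_2v\notin E(G)$ because $N_I(v)=\{u_1\}$. The graph stays connected and is still split with independent set $I$. It is $K_{1,4}$-free: an induced $K_{1,4}$ cannot be centred in $I$, since the neighbours of such a vertex lie in the clique $K$. A centre $w\in K$ can have at most one leaf in $K$, so it would need $d_I(w)\ge 3$. After adding the edge, $d_I(v)=2$ and every other vertex of $K$ keeps $d_I\le 2$. For non-Hamiltonicity, recall that $|I|\ge 3$ and, by Claim \ref{Ione}, $I'=\{u_2\}$. Hence $u_3$ (and every vertex of $I\setminus\{u_1,u_2\}$) has degree $1$ in $G$, and still has degree $1$ in $G+u_2v$, so $G+u_2v$ has no Hamiltonian cycle.

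The one point needing care is that $K$ must remain a maximum clique, i.e.\ $u_2$ must not become adjacent to all of $K$. This fails only if $N_G(u_2)=K\setminus\{v\}$. In that subcase I would use a different modification.

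\emph{Subcase $|I|\ge 4$.} Add $u_3v$ instead. Then $u_4$ still has degree $1$, so the graph is non-Hamiltonian. Also $d_I(v)$ becomes $2$, and $u_3$, now of degree $2$, cannot dominate $K$ because $|K|\ge |I|\ge 4$.

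\emph{Subcase $|I|=3$.} Here $N_G(u_2)=K\setminus\{v\}$ and $u_3$ is pendant at some $w\ne v$ with $d_I(w)\le 2$. This forces $w\notin K'$, since otherwise $w$ would be adjacent to $u_1,u_2,u_3$. I would compare Perron entries through the eigen-equations at $u_1$ and $u_2$:
\[
\rho x_{u_2}-\rho x_{u_1}=\sum_{z\in K\setminus K'}x_z-x_v .
\]
Since $w\in K\setminus K'$ and $x_w>x_v$ (from the equations at $w$ and $v$, because $w$ sees $u_2,u_3$ while $v$ sees only $u_1$, after bounding $x_{u_1}$), this would give $x_{u_2}>x_{u_1}$. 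That contradicts the choice of $u_1$ as the maximiser of $x$ on $I$, and so rules this configuration out.

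I expect this last subcase to be the main obstacle. The Perron inequality $x_w>x_v$ has to be derived carefully, since $x_{u_1}$ appears on the wrong side. The fallback is to shift the pendant edge $u_3w$ to $u_3v$ via Lemma \ref{perron}, or to argue directly that this configuration is excluded by the maximality of $\rho(G)$.
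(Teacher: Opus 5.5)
Your proposal is correct and is essentially the paper's argument. Both add an edge at $v$ via Lemma~\ref{addedges} whenever $K$ stays a maximum clique and a pendant vertex survives in $I$. Both use a Perron-vector comparison that contradicts the choice of $u_1$ in the single residual configuration: $|I|=3$, $N_G(u_2)=K\setminus\{v\}$, and $u_3$ pendant at $w\in K''$. The only difference is the order of the case split: the paper splits on $|I|$ first (adding $u'v$ for a pendant $u'$ when $|I|\ge 4$), while you split first on whether $N_G(u_2)=K\setminus\{v\}$.

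The step you flagged closes by using the two relations together rather than proving $x_w>x_v$ first.
\begin{itemize}
\item The eigen-equations at $w$ and $v$ give $(\rho+1)(x_w-x_v)=x_{u_2}+x_{u_3}-x_{u_1}$.
\item Those at $u_2$ and $u_1$ give $\rho(x_{u_2}-x_{u_1})=\sum_{z\in K''}x_z-x_v\ge x_w-x_v$.
\item Combining, $(\rho^2+\rho-1)(x_{u_2}-x_{u_1})\ge x_{u_3}>0$, so $x_{u_2}>x_{u_1}$. This is exactly the paper's computation.
\end{itemize}

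Alternatively, your fallback works as a dichotomy on $x_v$ versus $x_w$. If $x_v\ge x_w$, then $G-u_3w+u_3v$ lies in $\mathbb{G}_n$ and has larger spectral radius by Lemma~\ref{perron}. If $x_w>x_v$, your displayed identity already gives $x_{u_2}>x_{u_1}$.
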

\begin{proof}
Evidently, $d_I(v)\ge 1$ for any vertex $v\in K'$.
Suppose that for some $v_1\in K'$, $d_I(v_1)=1$. Then $N_I(v_1)=\{u_1\}$.
Let $I_1=\{u\in I: d_G(u)=1\}$.
Then $|I_1|\ge1$ by Claim \ref{Ione} and $|I|\ge3$.
Let $u'\in I_1$.
If $|I_1|\ge2$, then  $G+u'v_1$ is a connected $K_{1,4}$-free split graph of type $(K,I)$  and is not  Hamiltonian graph as $|I_1|\ge2$, but we have by Lemma \ref{addedges} that $\rho(G+u'v_1)>\rho(G)$, a contradiction.
So $I_1=\{u'\}$, implying that $|I|=3$.
%Let $I'=\{u_2\}$.

Suppose that $K'\setminus \{v_1\}\subseteq N_G(u_2)$.
As $d_I(v)\le2$ for any $v\in K$, we have $N_I(v)=\{u_1, u_2\}$ for any $v\in K'\setminus \{v_1\}$.
As $G$ is connected, we have $N_G(u')\subseteq K''$, say $N_G(u')=\{v_0\}$.
If $v_0\in N_G(u_2)$, then  $(\rho(G)+1)(x_{v_1}-x_{v_0})=x_{u_1}-x_{u_2}-x_{u'}$ and $\rho(G)(x_{u_1}-x_{u_2})=x_{v_1}-x_{v_0}$, so $(\rho^2(G)+\rho(G)-1)(x_{u_1}-x_{u_2})=-x_{u'}<0$, implying that $x_{u_1}<x_{u_2}$, a contradiction.
So $v_0\notin N_G(u_2)$.
Now it is easy to see that $G+u_2v_1\in\mathbb{G}_n$. By Lemma \ref{addedges}, we have $\rho(G+u_2v_1)>\rho(G)$, a contradiction.
So there are two vertices in $K'$, say $v_1,v_2$,  neither is adjacent to $u_2$.
Clearly, $G+u_2v_1\in\mathbb{G}_n$. By Lemma \ref{addedges}, we have $\rho(G+u_2v_1)>\rho(G)$, a contradiction.
\end{proof}

\begin{Claim}\label{u1u2}
$N_G(u_2)\subseteq K'$.
\end{Claim}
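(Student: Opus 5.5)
We argue by contradiction: suppose $u_2$ has a neighbor $w\in K''$. We then build a graph in $\mathbb{G}_n$ with larger spectral radius than $G$, contradicting the maximality of $G$.

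\textbf{Setup.} We are in Case 1, so every $v\in K$ has $d_I(v)\le 2$. By Claim~\ref{Ione}, $I'=\{u_2\}$, so every $u\in I\setminus\{u_1,u_2\}$ has degree $1$; since $|I|\ge3$, at least one such vertex exists. By Claim~\ref{11dk}, every $v\in K'$ has $N_I(v)=\{u_1,u\}$ for some $u\in I\setminus\{u_1\}$.

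\textbf{Moving the edge $u_2w$.} We want to replace $u_2w$ by $u_1w$, which is legal since $w\in K''$ means $u_1w\notin E(G)$. Because $x_{u_1}\ge x_{u_2}$ by the choice of $u_1$, Lemma~\ref{perron} gives $\rho(G-u_2w+u_1w)>\rho(G)$. This requires $d_G(u_2)\ge2$, which holds since $u_2\in I'$, so $u_2$ keeps a neighbor and the new graph is connected. It remains to check that $G-u_2w+u_1w\in\mathbb{G}_n$.
\begin{itemize}
\item \emph{$K_{1,4}$-freeness.} Only $d_I(w)$ changes, and it stays at most $2$. In Case 1 a vertex of $K$ with at most two neighbors in $I$ cannot be the center of an induced $K_{1,4}$, since the other two leaves would have to lie in the clique $K$.
\item \emph{Type $(K,I)$.} $K$ remains a maximum clique provided $u_1$ is not adjacent to all of $K$. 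If after the move $N(u_1)=K$, a degree-one vertex of $I$ would be adjacent to a vertex already having two $I$-neighbors, contradicting Case 1; so this cannot happen.
\item \emph{Non-Hamiltonicity.} Every vertex of $I\setminus\{u_1,u_2\}$ still has degree $1$, so the new graph is not Hamiltonian.
\end{itemize}

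\textbf{Main obstacle.} The delicate case is $d_G(u_2)=2$, where after the move $u_2$ has degree $1$. Then we must ensure connectivity, which is automatic since $u_2$ retains one neighbor, and avoid creating a split graph whose maximum clique changes. The latter is the one point to verify carefully, using $|K|\ge|I|$ and Claim~\ref{11dk}.

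If $N_G(u_2)\subseteq K''$ entirely, the same move applies to any one edge. Alternatively, we can add an edge $u_2v$ with $v\in K'$ only if $d_I(v)\le1$, which contradicts Claim~\ref{11dk}. So the edge-moving argument via Lemma~\ref{perron} is the route we take.
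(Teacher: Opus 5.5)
Your move $u_2w\to u_1w$ is valid when $|K''|\ge 2$, and in that case it is shorter than the paper's argument. The gap is the case $K''=\{w\}$. There $N_G(u_1)=K\setminus\{w\}$, so in $G-u_2w+u_1w$ the vertex $u_1$ is adjacent to all of $K$. Then $K\cup\{u_1\}$ is a clique of size $|K|+1$, and the new graph is a split graph with only $|I|-1$ independent vertices. It is not of type $(K,I)$, hence not a competitor in $\mathbb{G}_n$.

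Your reason why this cannot happen is false: Case 1 and Claims \ref{KK}, \ref{Ione}, \ref{11dk} all allow it. Take $K=\{a,b,w\}$ and $I=\{u_1,u_2,u_3\}$ with $N(u_1)=\{a,b\}$, $N(u_2)=\{a,w\}$, $N(u_3)=\{b\}$.
\begin{itemize}
\item Every vertex of $K$ has at most two $I$-neighbors, and $a,b$ have exactly two.
\item $u_3$ is the only degree-one vertex.
\item $(\rho^2+\rho-1)(x_{u_1}-x_{u_2})=x_{u_3}>0$, so $u_1$ is the correct choice of maximum entry.
\end{itemize}
Yet your move makes $u_1$ complete to $K$. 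Your \emph{Main obstacle} paragraph also aims at the wrong issue: the danger is the neighborhood of $u_1$ growing, not $u_2$ dropping to degree one.

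To handle $K''=\{w\}$ you need a move that does not enlarge $N_G(u_1)$. The paper uses a degree-preserving switch for the whole claim. Since $N_G(u_2)\ne K$, there is $v_2\in K'\setminus N_G(u_2)$. Its second $I$-neighbor $u'$ (Claim \ref{11dk}) differs from $u_2$, so $d_G(u')=1$. One replaces $u_2w,u'v_2$ by $u_2v_2,u'w$. This keeps every degree and every $d_I(v)$, so $K$ automatically stays a maximum clique and the graph stays in $\mathbb{G}_n$.

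The price is that the spectral increase is no longer a one-line use of Lemma \ref{perron}. You need the eigenvector comparisons $x_{u_2}>x_{u'}$ and $x_{v_2}\ge x_w$, and a separate argument ruling out equality.

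A partial shortcut exists. If at least two vertices of $K'$ miss $u_2$, you may add $u_2v_2$ and apply Lemma \ref{addedges}. This stays $K_{1,4}$-free because every vertex of $K\setminus\{v_2\}$ is adjacent to $u_1$ or $u_2$, and $u'$ keeps degree one. But when $N_G(u_2)=K\setminus\{v_2\}$, the switch (or an equivalent argument) cannot be avoided.
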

\begin{proof}
Suppose that $v_1\in N_G(u_2)\cap K''$.
Then there is $v_2\in K'$ and $v_2\notin N_G(u_2)$, as otherwise $\rho(G)(x_{u_1}-x_{u_2})\le -x_{v_1}<0$, a contradiction.
By Claim \ref{11dk}, there is $u'\in I$ such that $u'\in N_I(v_2)$.
Then we claim that $x_{u_2}>x_{u'}$.
If $N_G(u_2)\cap K'\ne\emptyset$, say $v'\in N_G(u_2)\cap K'$, then $(\rho(G)+1)\rho(G)(x_{u_2}-x_{u'})\ge(\rho(G)+1)(x_{v_1}+x_{v'}-x_{v_2})>x_{u_2}-x_{u'}$, implying that $x_{u_2}-x_{u'}>0$.
And if $N_G(u_2)\cap K'=\emptyset$, then $(\rho(G)+1)\rho(G)(x_{u_2}-x_{u'})>x_{v_2}+2x_{u_2}-x_{u_1}-x_{u'}$, and as $\rho(G)(x_{v_2}-x_{u_1})>x_{u_1}-x_{v_2}$, we have $x_{v_2}-x_{u_1}>0$, so $(\rho^2(G)+\rho(G)-1)(x_{u_2}-x_{u'})>0$, i.e., $x_{u_2}-x_{u'}>0$ as desired.

Note that
%$\rho(G)\left(x_{u'}-\sum_{u\in N_I(v_1)\setminus\{u_2\}}x_u\right)\ge x_{v_2}-x_{v_1}$.
\[
\rho(G)(\rho(G)+1)(x_{v_2}-x_{v_1})=\rho(G)\left(x_{u_1}-x_{u_2}+x_{u'}-\sum_{u\in N_I(v_1)\setminus\{u_2\}}x_u\right).
\]
If $d_I(v_1)=1$, then $\rho(G)(\rho(G)+1)(x_{v_2}-x_{v_1})>0$.
And if $d_I(v_1)=2$, then $\rho(G)\left(x_{u'}-\sum_{u\in N_I(v_1)\setminus\{u_2\}}x_u\right)=x_{v_2}-x_{v_1}$,
and so
\[
\rho(G)(\rho(G)+1)(x_{v_2}-x_{v_1})\ge x_{v_2}-x_{v_1}.
\]
Thus in either case, we have $x_{v_2}\ge x_{v_1}$.
Let $G'=G-u_2v_1-u'v_2+u_2v_2+u'v_1$. Then $G'\in\mathbb{G}_n$.
So
\[
\rho(G')-\rho(G)\ge \mathbf{x}^\top(A(G')-A(G))\mathbf{x}=2(x_{u_2}-x_{u'})(x_{v_2}-x_{v_1})\ge0,
\]
i.e., $\rho(G')=\rho(G)$ and $\mathbf{x}$ is also the Perron vector of $G'$,
however, we have $0=(\rho(G')-\rho(G))x_{v_2}=x_{u_2}-x_{u'}>0$, a contradiction.
\end{proof}

\noindent{\bf Case 1.1.} $|I|=3$.

If there is $v'\in K''$ such that $d_I(v')=0$, then $N_G(u_2)\ne K\setminus\{v'\}$ by Claim \ref{u1u2}, and $G+u_2v'\in\mathbb{G}_n$, and we have by Lemma \ref{addedges} that $\rho(G+u_2v')>\rho(G)$, a contradiction.
So $d_I(v)\ge1$ for any $v\in K''$. By Claims \ref{Ione} and \ref{u1u2}, we have $|K''|=1$ and $d_I(v)=1$ for $v\in K''$. By Lemma \ref{addedges} and Claims \ref{11dk} and \ref{u1u2}, as well as Lemma \ref{I3}, we have $G\cong\Gamma_{n,3}$ with $n\ge7$.

\noindent{\bf Case 1.2.} $|I|\ge4$.

If there exists $v'\in K''$ such that $d_I(v')\le1$, then $N_G(u_2)\ne K\setminus\{v'\}$ by Claim \ref{u1u2}. Then $G+u_2v'\in\mathbb{G}_n$, and we have by Lemma \ref{addedges} that $\rho(G+u_2v')>\rho(G)$, a contradiction.
So $d_I(v)=2$ for any $v\in K''$.
Thus $G\cong\Gamma$ as displayed in Fig. \ref{f400}.

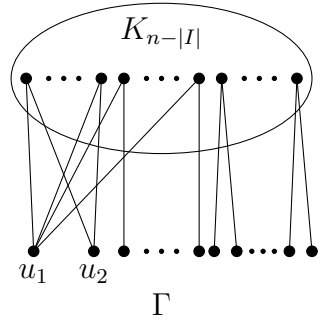
\begin{figure}[ht]
\centering
\begin{tikzpicture}

\draw (2,-0.8) ellipse (2 and 1);
\filldraw [black] (0.2,-0.8) circle (2pt);
\filldraw [black] (1.2,-0.8) circle (2pt);
\filldraw [black] (0.5,-0.8) circle (0.8pt);
\filldraw [black] (0.7,-0.8) circle (0.8pt);
\filldraw [black] (0.9,-0.8) circle (0.8pt);
\filldraw [black] (1.5,-0.8) circle (2pt);
\filldraw [black] (1.8,-0.8) circle (0.8pt);
\filldraw [black] (2.0,-0.8) circle (0.8pt);
\filldraw [black] (2.2,-0.8) circle (0.8pt);
\filldraw [black] (2.5,-0.8) circle (2pt);
\filldraw [black] (2.8,-0.8) circle (2pt);
\filldraw [black] (3.1,-0.8) circle (0.8pt);
\filldraw [black] (3.3,-0.8) circle (0.8pt);
\filldraw [black] (3.5,-0.8) circle (0.8pt);
\filldraw [black] (3.8,-0.8) circle (2pt);
\draw  [black](0.3,-3.1)--(0.2,-0.8);
\draw  [black](0.3,-3.1)--(1.2,-0.8);
\draw  [black](0.3,-3.1)--(1.5,-0.8);
\draw  [black](0.3,-3.1)--(2.5,-0.8);
\filldraw [black] (0.3,-3.1) circle (2pt);
\filldraw [black] (1.1,-3.1) circle (2pt);
\draw  [black](1.1,-3.1)--(0.2,-0.8);
\draw  [black](1.1,-3.1)--(1.2,-0.8);
\filldraw [black] (1.5,-3.1) circle (2pt);
\filldraw [black] (1.8,-3.1) circle (0.8pt);
\filldraw [black] (2.0,-3.1) circle (0.8pt);
\filldraw [black] (2.2,-3.1) circle (0.8pt);
\filldraw [black] (2.5,-3.1) circle (2pt);
\draw  [black](1.5,-3.1)--(1.5,-0.8);
\draw  [black](2.5,-3.1)--(2.5,-0.8);
\filldraw [black] (2.7,-3.1) circle (2pt);
\filldraw [black] (3.0,-3.1) circle (2pt);
\draw  [black](2.7,-3.1)--(2.8,-0.8);
\draw  [black](3.0,-3.1)--(2.8,-0.8);
\filldraw [black] (3.2,-3.1) circle (0.8pt);
\filldraw [black] (3.35,-3.1) circle (0.8pt);
\filldraw [black] (3.5,-3.1) circle (0.8pt);
\filldraw [black] (3.7,-3.1) circle (2pt);
\filldraw [black] (4.0,-3.1) circle (2pt);
\draw  [black](3.7,-3.1)--(3.8,-0.8);
\draw  [black](4.0,-3.1)--(3.8,-0.8);
\node at (0.3, -3.4) {$u_1$};
\node at (1.1, -3.4) {$u_2$};
\node at (2, -0.2) {$K_{n-|I|}$};
%\node at (3.3, -0.5) {$\ge2$};
\node at (2, -3.8) {$\Gamma$};
\end{tikzpicture}
\caption{The structure of $\Gamma$.}
\label{f400}
\end{figure}

Let $V_1=N_G(u_2)\subseteq K'$, $V_2=K'\setminus V_1$, $V_3=K''$, $V_4=\{u: u\in I\setminus \{u_1, u_2\}, N_G(u)\in V_2\}$ and $V_5=I\setminus(\{u_1, u_2\}\cup V_4)$, where $|V_5|=2|K''|$, $|V_4|=|I|-|V_5|-2$, and $|V_4|=0$ if $|I|=2|K''|+2$.
By symmetry, denote by $x_i$ the entry of $\mathbf{x}$ at each vertex from $V_i$ for $i=1,\dots,5$.
Note that $(\rho(\Gamma)+1)(x_1-x_3)=x_{u_1}+x_{u_2}-2x_5$, $\rho(\Gamma)(x_{u_1}-x_5)=|V_1|x_1+|V_2|x_2-x_3$ and $\rho(\Gamma)(x_{u_2}-x_5)=|V_1|x_1-x_3$.  %$\rho(\Gamma)(x_4-x_6)=|V_1|x_1+|V_2|x_2-x_3$.
Then
\[
\rho(\Gamma)(\rho(\Gamma)+1)(x_1-x_3)=2|V_1|x_1+|V_2|x_2-2x_3,
\]
i.e.,
\[
(\rho^2(\Gamma)+\rho(\Gamma)-2)(x_1-x_3)=2(|V_1|-1)x_1+|V_2|x_2.
\]
As $|V_1|+2|V_2|+3|V_3|+2=n\ge 2|I|=2(2+|V_2|+2|V_3|)$, we have $|V_1|\ge |V_3|+2\ge 2$, so  $2(|V_1|-1)x_1+2|V_2|x_2>0$. As  $\rho^2(\Gamma)+\rho(\Gamma)-2>0$,  we have $x_1>x_3$.
Let $V_3=\{z_1,\dots,z_{|V_3|}\}$, and $\{w_1,\dots,w_{|V_3|}\}\subset V_5$ such that $N_G(w_i)\cap N_G(w_j)=\emptyset$ and $N_G(w_i)=\{z_i\}$ for any $1\le i< j\le|V_3|$.
Fix $\{v_1,\dots,v_{|V_3|}\}\subset V_1$.
Let $\Gamma'=\Gamma+v_1w_1$ if $|V_3|=1$ and
\[
\Gamma'=\Gamma-\{z_iw_i: 1\le i\le|V_3|\}+\{v_iw_i: 1\le i\le|V_3|\}+u_2z_1+\{u_1z_i: 2\le i\le|V_3|\}
%+\{u_2v: v\in V_3\setminus\cup_{i=1}^{|V_3|}N_G(w_i)\}.
\]
otherwise.
It can be checked that $\Gamma'\in\mathbb{G}_n$. As $x_{v_i}=x_1>x_3=x_{z_i}$ for any $1\le i\le |V_3|$, we have by Lemma \ref{addedges} that $\rho(\Gamma')>\rho(\Gamma)$ in the former case, and in the latter case, we have
\[
\rho(\Gamma')-\rho(\Gamma)\ge2|V_3|x_5(x_1-x_3)+2x_{u_2}x_3+2(|V_3|-1)x_{u_1}x_3>0,
\]
a contradiction.

\noindent{\bf Case 2.} $\exists v_0\in K$ such that $d_I(v_0)=3$.

\begin{Claim}\label{adj}
	If $d_I(v)=3$ for some $v\in K$, then any vertex in $K\setminus\{v\}$ is adjacent to at least one vertex of $N_I(v)$.
\end{Claim}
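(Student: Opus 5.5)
The claim should follow purely from $K_{1,4}$-freeness; no spectral argument is needed. Fix $v\in K$ with $d_I(v)=3$ and write $N_I(v)=\{a,b,c\}$. Let $w\in K\setminus\{v\}$, and suppose for contradiction that $w$ is adjacent to none of $a,b,c$.

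First, $w$ is a neighbor of $v$, since $K$ is a clique and $w\ne v$. Next, $a,b,c$ are pairwise nonadjacent because $I$ is independent. By assumption $w$ is nonadjacent to each of $a,b,c$. Hence $\{a,b,c,w\}$ is a set of four pairwise nonadjacent neighbors of $v$. Then $v$ together with these four vertices induces a copy of $K_{1,4}$, contradicting the hypothesis that $G$ is $K_{1,4}$-free. Therefore every vertex of $K\setminus\{v\}$ has a neighbor in $N_I(v)$.

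This is the same observation used at the start of the proof of Lemma \ref{DI1}, where it was stated as $\bigcup_{i=1}^3 N_G(u_i)=K$. Note that $v$ itself is adjacent to all three vertices of $N_I(v)$, so the claim can equivalently be written as $\bigcup_{u\in N_I(v)}N_G(u)=K$.

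The only point that needs care is confirming that $w$ really lies in $N_G(v)$, which is immediate since $K$ is a clique. I expect no real obstacle; the argument is direct.
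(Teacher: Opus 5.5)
Your proof is correct and is essentially the paper's argument: a vertex of $K\setminus\{v\}$ with no neighbor in $N_I(v)$ would, together with the three vertices of $N_I(v)$, give $v$ four pairwise nonadjacent neighbors, contradicting $K_{1,4}$-freeness. Your check that $w\in N_G(v)$ because $K$ is a clique is the only detail needed, and you handled it.
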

\begin{proof}
	If $v_1\in K\setminus\{v\}$ such that $N_I(v_1)\cap N_I(v)=\emptyset$, then $v$ has four pairwise nonadjacent neighbors, $v_1$ and those in $N_I(v)$, a contradiction.
\end{proof}

Assume that  $N_I(v_0)=\{u_1, u_2, u_3\}$ such that\[
x_{u_1}+x_{u_2}+x_{u_3}=\max\{x_{u_r}+x_{u_s}+x_{u_t}: v\in K, d_I(v)=3,N_I(v)=\{u_r, u_s, u_t\}\},
 \]
where $x_{u_1}\ge x_{u_2}\ge x_{u_3}$.
By Claim \ref{adj}, we have $N_G(u_1)\cup N_G(u_2)\cup N_G(u_3)=K$.

Let $K'=N_G(u_1)\cap N_G(u_2)\cap N_G(u_3)$. Let $A_{11}=N_G(u_1)\setminus (N_G(u_2)\cup N_G(u_3))$, and for $j=2,3$, $A_{1j}=(N_G(u_1)\cap N_G(u_j))\setminus K'$.
Let $A=N_G(u_1)\setminus K'=A_{11}\cup A_{12}\cup A_{13}$.  Let $B=K\setminus N_G(u_1)=B_{22}\cup B_{33}\cup B_{23}$, where $B_{23}=N_G(u_2)\cap N_G(u_3)$, $B_{22}$ is the set of neighbors of $u_2$ in $K\setminus(N_G(u_1)\cup N_G(u_3))$ and $B_{33}$ is the set of neighbors of $u_3$ in $K\setminus(N_G(u_1)\cup N_G(u_2))$.
Then $K=K'\cup A\cup B$.

As $K$ is the maximum clique of $G$,
%\begin{Claim}\label{BB}
$B=K\setminus N_G(u_1)\ne\emptyset$, $A_{11}\cup A_{13}\cup B_{33}=K\setminus N_G(u_2)\ne\emptyset$ and $A_{11}\cup A_{12}\cup B_{22}=K\setminus N_G(u_3)\ne\emptyset$.
%\end{Claim}

Suppose  that $|I|=3$. We claim that $n=6$ and $G\cong\Gamma_{n,3}'$.

Firstly, we show that $d_G(u_i)\ge 2$ for $i=1,2,3$.
Suppose that there exists a vertex of $I$ with degree $1$.
Then $d_G(u_3)=1$ as $x_{u_1}\ge x_{u_2}\ge x_{u_3}$, so $N_G(u_3)=K'$.
Note that $A_{13}\cup B_{33}\cup B_{23}=\emptyset$. Then  $A_{11}\ne\emptyset$ and $B=B_{22}\ne\emptyset$.
If $|A_{11}|\ge 2$, then for $v'\in A_{11}$, $G+u_2v'$ is not Hamiltonian as $d_G(u_3)=1$, so $G+u_2v'\in\mathbb{G}_n$. By Lemma \ref{addedges}, we have $\rho(G+u_2v')>\rho(G)$, a contradiction. So $|A_{11}|=1$.
Similarly, we have $|B_{22}|=1$. Then $A=A_{11}\cup A_{12}$, so $G\cong \Gamma_{n,3}''$ as  displayed in Fig. \ref{flem}. By Lemma \ref{I3},  $\rho(G)<\rho(\Gamma_{n,3})$, a contradiction. Thus we have $d_G(u_i)\ge 2$ for $i=1,2,3$, as desired.

As $G$ is not Hamiltonian and a connected $K_{1,4}$-free split graph of type $(K,I)$, it is easy to see that $|K'|=1$. Recall that $d_G(u_i)\ge 2$ for $i=1,2,3$. We claim that there are at least two vertices of $I$ with degree $2$. Otherwise, let $u_i\in I$ such that $d_G(u_i)=2$ for some $i=1,2,3$, then letting $N_G(u_i)=\{v_0,v_1\}$ and denoting by $u_j$, $u_k$ the two other vertices of $I$, we  find $v_2\ne v_3\in K\setminus\{v_0, v_1\}$ such that $v_2\in N_G(u_j)$ and $v_3\in N_G(u_k)$.
If $v_2\in N_G(u_k)$, then let $P$ be a Hamiltonian path in $G[K\setminus\{v_0,v_2\}]$ from $v_3$ to $v_1$, and otherwise, let $v'\in N_G(u_k)\setminus\{v_0,v_3\}$, which may be equal to $v_1$, and let $Q$ be a Hamiltonian path in $G[K\setminus\{v_0,v_1,v'\}]$ from $v_2$ to $v_3$.
So correspondingly,
$v_1u_iv_0u_jv_2u_kv_3P$ or  $v_1u_iv_0u_jv_2QQ'$ (with $Q'=u_kv'v_1$ or  $Q'=u_kv_1$) is a Hamiltonian cycle of $G$, a contradiction.

%there is at least one vertex of $I$ with degree $2$. Otherwise, the minimum degree of $I$ is at least $3$. Then the removal of any two vertices from $G$ results in a connected graph, so $G$ is $3$-connected, implying that $G$ is Hamiltonian by Theorem 1.3???, a contradiction.

If the degree of each vertex of $I$ are $2$, then  $G\cong\Gamma_{n,3}'$ with $n=6$, or $G\cong \Gamma'$  with $n=7$ (as displayed in Fig. \ref{ff--1}).
Suppose that $G\cong\Gamma'$ when $n=7$. Note that $\Gamma'-u_2v_2+u_2v_1\cong\Gamma_{n,3}'$ and $x_{v_1}=x_{v_2}$. By Lemma \ref{perron}, we have $\rho(\Gamma_{n,3}')>\rho(\Gamma')$, so $G\cong\Gamma_{n,3}'$ with $n=6$.

\begin{figure}[ht]
\centering
\begin{tikzpicture}

\draw (7,-0.8) ellipse (2 and 1);
\filldraw [black] (5.5,-0.8) circle (2pt);
\filldraw [black] (6.5,-0.8) circle (2pt);
\filldraw [black] (7.5,-0.8) circle (2pt);
\filldraw [black] (8.5,-0.8) circle (2pt);
\filldraw [black] (6,-3.1) circle (2pt);
\draw  [black](6,-3.1)--(5.5,-0.8);
\draw  [black](6,-3.1)--(6.5,-0.8);
\filldraw [black] (7,-3.1) circle (2pt);
\draw  [black](7,-3.1)--(6.5,-0.8);
\draw  [black](7,-3.1)--(7.5,-0.8);
\filldraw [black] (8,-3.1) circle (2pt);
\draw  [black](8,-3.1)--(6.5,-0.8);
\draw  [black](8,-3.1)--(8.5,-0.8);
\node at (7, -0.3) {$K_{4}$};
\node at (7, -3.8) {$\Gamma'$};
\end{tikzpicture}
\caption{The structure of $\Gamma'$.}
\label{ff--1}
\end{figure}
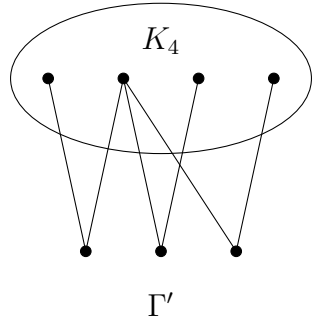

If there are exactly  two vertices of $I$ with degree $2$, then the neighborhood of them are the same as $G$ is not Hamiltonian, so  $G\cong\Gamma_{n,3}'$ with $n\ge7$.
By Lemma \ref{I3}, we have $\rho(G)<\rho(\Gamma_{n,3})$, a contradiction.
%If there is only one vertex, say $u_i$ of $I$ with degree $2$,  where $i=1,2,3$, then letting $N_G(u_i)=\{v_0,v_1\}$ and denoting by $u_j$, $u_k$ the two other vertices of $I$, we  find $v_2, v_3\in K\setminus\{v_0, v_1\}$ such that $v_2\in N_G(u_j)$ and $v_3\in N_G(u_k)$,  so $v_1u_iv_0u_jv_2u_kv_1$ if $v_3=v_2$, $v_1u_iv_0u_jv_2\dots v_3u_kv_1$ or $v_1u_iv_0u_jv_2\dots v_3u_kv_{|K|-1}v_1$ otherwise are  Hamiltonian cycle of $G$, a contradiction.

Suppose in the following that $|I|\ge 4$ (in Case 2).
By Lemma \ref{DI1}, there is a vertex of $I$ with degree $1$.
As $x_{u_1}\ge x_{u_2}\ge x_{u_3}$, we have  $d_G(u_2)=d_G(u_3)=1$ if $d_G(u_1)=1$, contradicting the fact that  $\cup_{i=1}^3N_G(u_i)=K$. So $d_G(u_1)\ge2$. Similarly, we have $d_G(u_2)\ge2$.

\begin{Claim}\label{NOTE}
For $v', v''$ in the same vertex set $A_{11}$ or $B_{ii}$ with $i=2,3$ such that $d_I(v')=d_I(v'')=3$,  we have   $N_I(v')\setminus\{u_1,u_2,u_3\}\ne N_I(v'')\setminus\{u_1,u_2,u_3\}$.
\end{Claim}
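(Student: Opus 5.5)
The plan is to argue by contradiction, using an edge switch controlled by the Perron vector, in the same style as Lemma \ref{DI1} and Claims \ref{Fis}--\ref{jk}. Treat $v',v''\in A_{11}$ first; the cases $B_{22}$ and $B_{33}$ are symmetric, with $u_2$ (respectively $u_3$) playing the role of $u_1$. A vertex of $A_{11}$ has exactly one $I$-neighbour among $u_1,u_2,u_3$, namely $u_1$. So $d_I(v')=d_I(v'')=3$ means $N_I(v')=\{u_1,w_1,w_2\}$ and $N_I(v'')=\{u_1,w_1',w_2'\}$ with $w_i,w_i'\notin\{u_1,u_2,u_3\}$. Suppose, contrary to the claim, that $\{w_1,w_2\}=\{w_1',w_2'\}$.

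Then $v'$ and $v''$ are adjacent vertices of $K$ with the same closed neighbourhood, so the eigen-equations give $x_{v'}=x_{v''}$. Since $d_I(v')=3$, the choice of $\{u_1,u_2,u_3\}$ (maximal sum of Perron entries over $I$-neighbourhoods of size $3$) gives
\[
x_{u_1}+x_{w_1}+x_{w_2}\le x_{u_1}+x_{u_2}+x_{u_3},\quad\text{i.e.,}\quad x_{w_1}+x_{w_2}\le x_{u_2}+x_{u_3}.
\]
Now form
\[
G'=G-\{v''w_1,v''w_2\}+\{v''u_2,v''u_3\},
\]
which turns $v''$ into a vertex of $K'$. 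By the Rayleigh quotient,
\[
\rho(G')-\rho(G)\ge 2x_{v''}\bigl(x_{u_2}+x_{u_3}-x_{w_1}-x_{w_2}\bigr)\ge 0 .
\]
If equality holds, then $\mathbf{x}$ is also the Perron vector of $G'$. Comparing the eigen-equations at $u_2$ in $G$ and $G'$ then gives $x_{v''}=0$, which is impossible. Hence $\rho(G')>\rho(G)$, and it remains to show $G'\in\mathbb{G}_n$.

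Checking $G'\in\mathbb{G}_n$ proceeds as follows.
\begin{itemize}
\item Connectivity: $w_1$ and $w_2$ remain adjacent to $v'$.
\item Type $(K,I)$: $K$ is still a clique, and $K$ stays a maximum clique because $u_1,u_2,u_3$ have no common neighbourhood equal to $K$ (the sets $B$, $A_{11}\cup A_{13}\cup B_{33}$ and $A_{11}\cup A_{12}\cup B_{22}$ are nonempty and are unaffected apart from possibly losing $v''$). One must verify they do not become empty, or else treat that as a subcase.
\item $K_{1,4}$-freeness at $v''$: the new neighbourhood $\{u_1,u_2,u_3\}$ meets every vertex of $K$, since $\bigcup_{i=1}^3N_G(u_i)=K$. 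The new edges only add $I$-neighbours to $v''$, so no new claw is centred at another vertex through them. The removed edges matter only for a vertex $k\in K$ with $d_I(k)=3$ whose $I$-neighbourhood contains $w_1$ or $w_2$ and now misses $v''$. For such $k$, apply Claim \ref{adj} to $k$ in $G$; if $N_I(k)$ meets $\{u_1,u_2,u_3\}$ we are done.
\item Non-Hamiltonicity: Lemma \ref{DI1} supplies a vertex of degree $1$ in $I$, and it is not $u_1,u_2$ or $w_1,w_2$ (each of these has degree at least $2$). If it is not $u_3$, it survives in $G'$ and $G'$ is non-Hamiltonian.
\end{itemize}

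I expect the main obstacle to be the two leftover subcases of the last two checks. The first is when the degree-$1$ vertex is $u_3$. In that case I would instead use the lighter switch $G-v''w_1+v''u_2$, with $w_1$ chosen so that $x_{w_1}\le x_{w_2}$. Its gain is $2x_{v''}(x_{u_2}-x_{w_1})$, and $x_{u_2}\ge x_{w_1}$ should follow from $x_{u_2}\ge x_{u_3}$ together with the sum inequality above. The second is a vertex $k$ whose three $I$-neighbours avoid $\{u_1,u_2,u_3\}$. There I would argue that $k\in B$ forces $k$ to be adjacent to $w_1$ or $w_2$ (Claim \ref{adj} applied to $v'$), and then derive a contradiction with the maximality of $x_{u_1}+x_{u_2}+x_{u_3}$, in the same way that Claim \ref{jk} bounded entries of the Perron vector.
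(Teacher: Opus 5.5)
Your proposal is correct and is essentially the paper's proof. The paper also replaces $v''$'s $I$-neighbourhood by $\{u_1,u_2,u_3\}$ when $d_G(u_3)\ne1$, using the Rayleigh quotient and then the eigen-equation at $u_2$ to rule out equality; when $d_G(u_3)=1$ it uses the single switch $G-v''u'+v''u_k$ with $x_{u'}$ minimal, which is exactly your lighter switch. Your second anticipated obstacle never arises: Claim \ref{adj} applied to $v_0$ gives $\bigcup_{i=1}^3N_G(u_i)=K$, so every $N_I(k)$ meets $\{u_1,u_2,u_3\}$, which lies in $N_{G'}(v'')$. Similarly, $v'$ keeps each $K\setminus N_{G'}(u_j)$ nonempty, so the maximum-clique check is immediate.
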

\begin{proof}
Suppose that  $v'$, $v''\in A_{11}$ (or $B_{ii}$), $d_I(v')=d_I(v'')=3$ and $N_I(v')\setminus\{u_1,u_2,u_3\}= N_I(v'')\setminus\{u_1,u_2,u_3\}$.
Let $u'\in N_I(v')\setminus\{u_1,u_2,u_3\}$ such that $x_{u'}=\min\{x_u: u\in N_I(v')\setminus\{u_1,u_2,u_3\}\}$.
Let $G'=G-\{v''u: u\in N_I(v'')\}+\{v''u_j: j=1,2,3\}$ if $d_G(u_3)\ne1$, and $G'=G-v''u'+v''u_k$ otherwise, where $k=2$ if $v',v''\in A_{11}$ ($k=1$ if $v',v''\in B_{22})$.
Then  $G'\in\mathbb{G}_n$.
In the latter case, we have by Lemma \ref{perron} in view of $x_{u_k}\ge x_{u'}$ that $\rho(G')>\rho(G)$, a contradiction.
In the former case, we have
\begin{align*}
\rho(G')-\rho(G)\ge\mathbf{x}^\top(A(G')-A(G))\mathbf{x}=2x_{v''}\left(\sum_{i=1}^3x_{u_i}-\sum_{u\in N_I(v'')}x_u\right)\ge0,
\end{align*}
so $\rho(G')=\rho(G)$ and thus  $\mathbf{x}$ is also the Perron vector of $G'$.
But by a direct calculation, $\rho(G')x_{u_2}-\rho(G)x_{u_2}=x_{v''}>0$, a contradiction.
\end{proof}

Let $t=\max\{|A_{12}|, |A_{13}|, |B_{23}|\}$. We consider  $t=0$, $t=1$ and $t\ge2$ separately.

\noindent{\bf Case 2.1.} $t=0$.

None of $B_{22}\cup B_{33}$, $A_{11}\cup B_{33}$ and $A_{11}\cup B_{22}$ is empty. We claim that $A_{11}\ne\emptyset$ and $B_{22}\ne\emptyset$.
Otherwise, we have $A_{11}=\emptyset$ or $B_{22}=\emptyset$.
Then $B_{33}\ne\emptyset$.
But then  $\rho(G)(x_{u_i}-x_{u_3})=-\sum_{v\in B_{33}}x_v<0$ for $i=1$ if $A_{11}=\emptyset$, and for $i=2$ if $B_{22}=\emptyset$, a contradiction.

Let $A_{11}=\{v_{11},\dots,v_{1|A_{11}|}\}$ and $B_{ii}=\{v_{i1},\dots,v_{i|B_{ii}|}\}$ for $i=2,3$.
Let $\Delta(B)=\max\{d_I(v): v\in B\}$. Then $1\le\Delta(B)\le 3$.

\noindent{\bf Case 2.1.1.} $\Delta(B)=1$.

We first show that  $B_{33}=\emptyset$.
Otherwise,  $G+u_1v_{31}$ is a connected $K_{1,4}$-free split graph of type $(K,I)$, and it is not Hamiltonian by Lemma \ref{DI1} and $d_G(u_1)\ge 2$.  By Lemma \ref{addedges}, we have $\rho(G+u_1v_{31})>\rho(G)$, a contradiction.
Next we show that  $|A_{11}|=|B_{22}|=1$.
Suppose that $|B_{22}|\ge2$ or $|A_{11}|\ge2$. Let $G'=G+u_1v_{21}$ in the former case and $G'=G+u_2v_{11}$ in the latter case. Then $G'\in\mathbb{G}_n$, and by Lemma \ref{addedges}, we have $\rho(G')>\rho(G)$, a contradiction.

\begin{figure}[ht]
\centering
\begin{tikzpicture}
\draw (2,-0.8) ellipse (2 and 1);
\filldraw [black] (0.5,-0.8) circle (2pt);
\filldraw [black] (1.1,-0.8) circle (2pt);
\filldraw [black] (1.6,-0.8) circle (0.8pt);
%\filldraw [black] (1.7,-0.8) circle (0.8pt);
\filldraw [black] (2.0,-0.8) circle (0.8pt);
%\filldraw [black] (2.1,-0.8) circle (0.8pt);
\filldraw [black] (2.4,-0.8) circle (0.8pt);
%\filldraw [black] (2.5,-0.8) circle (0.8pt);
\filldraw [black] (2.9,-0.8) circle (2pt);
\filldraw [black] (3.5,-0.8) circle (2pt);
\draw  [black](0.5,-3.1)--(0.5,-0.8);
\draw  [black](1.3,-3.1)--(0.5,-0.8);
\draw  [black](1.3,-3.1)--(1.1,-0.8);
\draw  [black](1.3,-3.1)--(2.9,-0.8);
\filldraw [black] (0.5,-3.1) circle (2pt);
\filldraw [black] (1.3,-3.1) circle (2pt);
\filldraw [black] (2.4,-3.1) circle (2pt);
\draw  [black](2.4,-3.1)--(3.5,-0.8);
\draw  [black](2.4,-3.1)--(1.1,-0.8);
\draw  [black](2.4,-3.1)--(2.9,-0.8);
\filldraw [black] (3.5,-3.1) circle (2pt);
\draw  [black](3.5,-3.1)--(1.1,-0.8);
\draw  [black](3.5,-3.1)--(2.9,-0.8);
\node at (2, -0.2) {$K_{n-4}$};
%\node at (2, -3.8) {$\Gamma_7$};
\end{tikzpicture}
\caption{The structure of $G$.}
\label{f5}
\end{figure}
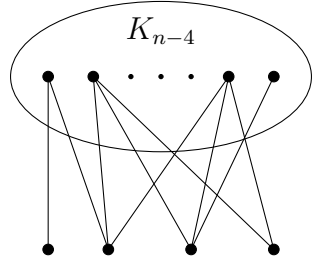

Then we claim that $d_I(v_{11})\le2$, as otherwise, to avoid $K_{1,4}$, one vertex of $N_I(v_{11})\setminus\{u_1\}$ is adjacent to $v_{21}$, contradicting the fact that $\Delta(B)=1$.
So $|I|=4$ and $G$ has the structure as displayed in Fig. \ref{f5}.
However, we can see that $G+u_3v_{21}\in \mathbb{G}_n$, and by Lemma \ref{addedges}, we have $\rho(G+u_3v_{21})>\rho(G)$, a contradiction.

\noindent{\bf Case 2.1.2.} $\Delta(B)=2$.

Assume that $v_{i1}\in B_{ii}$ with $d_I(v_{i1})=2$ with $i=2$ or $3$. Let $N_I(v_{i1})=\{u_i,u_4\}$. %, and $\{j\}=\{2,3\}\setminus\{i\}$.

We first show that $|B|=1$.
Otherwise, $G-u_iv_{i1}+u_1v_{i1}$ is a connected $K_{1,4}$-free split graph of type $(K,I)$, and it is not Hamiltonian by Lemma \ref{DI1} and $d_G(u_1)\ge2$.
By Lemma \ref{perron}, we have $\rho(G-u_iv_{i1}+u_1v_{i1})>\rho(G)$, a contradiction.
Together with $B_{22}\ne\emptyset$, we have  $|B|=|B_{22}|=1$, and $i=2$.

Next we show that $d_I(v)\le 2$ for some $v\in A_{11}$.
Suppose that $d_I(v)=3$ for any $v\in A_{11}$.
Since $G$ is $K_{1,4}$-free and $\Delta(B)=2$, we have $A_{11}\subset N_G(u_4)$.
For any $v\in A_{11}$, we have by Claim \ref{NOTE} that $d_G(u)=1$ for any $u\in N_I(v)\setminus\{u_1,u_4\}$.
%Then $|K'|\ge2$ as $x_{v_0}\ge x_{v}$ for any $v\in A_{11}$.
So $G+u_3v_{21}\in\mathbb{G}_n$. By Lemma \ref{addedges}, we have $\rho(G+u_3v_{21})>\rho(G)$, a contradiction.

Assume that $d_I(v_{11})\le2$. Then we claim $|A_{11}|=1$.  Otherwise,  $G+u_2v_{11}\in \mathbb{G}_n$, and we have by Lemma \ref{addedges} that  $\rho(G+u_2v_{11})>\rho(G)$, a contradiction.
Clearly, $|K'|\ge2$.
By Lemma \ref{DI1} and $d_I(v_{11})\le2$, $v_{11}$ is not adjacent to $u_4$.
Let $G'=G+u_3v_{11}$ if $d_I(v_{11})=1$, and $G'=G+u_4v_{11}$ otherwise.
Then   $G'\in \mathbb{G}_n$, and by Lemma \ref{addedges}, we have $\rho(G')>\rho(G)$, a contradiction.

\noindent{\bf Case 2.1.3.} $\Delta(B)=3$.

In this case, $|I|\ge5$. Assume that  $v_{i1}\in B_{ii}$ with $d_I(v_{i1})=3$ and $N_I(v_{i1})=\{u_i, u_4, u_5\}$, where $i=2$ or $3$. Assume that $x_{u_4}\ge x_{u_5}$.
By Claim \ref{adj}, $N_G(u_i)\cup N_G(u_4)\cup N_G(u_5)=K$.
So $d_G(u_4)\ge2$, as otherwise, $A_{11}\subset N_G(u_5)$, contradicting  $x_{u_4}\ge x_{u_5}$.
Let $j=\{2,3\}\setminus\{i\}$.

First, we show that $A_{11}\cup B_{jj}\subseteq N_G(u_4)$. It suffices to show that
$N_G(u_5)\subseteq N_G(u_4)$. Otherwise,  $G':=G-\{vu_5: v\in N_G(u_5)\setminus N_G(u_4)\}+\{vu_4: v\in N_G(u_5)\setminus N_G(u_4)\}\in \mathbb{G}_n$.
By Lemma \ref{perron}, we have $\rho(G')>\rho(G)$, a contradiction.

Next, we show that $d_G(u_3)\ge2$. Otherwise, $d_G(u_3)=1$. Then $i=2$, $B_{33}=\emptyset$ and $|K'|=1$.
If $|A_{11}|=1$, then $A=\{v_{11}\}$, $|B|=|B_{22}|\ge3$, and $(1+\rho(G))\rho(G)(x_{u_1}-x_{u_2})<(1+\rho(G))(x_{v_{11}}-x_{v_{21}}-x_{v_{22}})< x_{u_1}-x_{u_2}+\sum_{u\in N_I(v_{11})\setminus\{u_1,u_4\}}x_u-x_{v_{11}}<x_{u_1}-x_{u_2}$, where the last inequality follows as $\sum_{u\in N_I(v_{11})\setminus\{u_1,u_4\}}x_v<x_{v_{11}}$ which is obvious if $d_I(v_{11})=2$ and otherwise, let $u'\in N_I(v_{11})\setminus\{u_1,u_4\}$, then $\rho(G)(x_{u'}-x_{v_{11}})<x_{v_{11}}-x_{u'}$, as desired, so $x_{u_1}<x_{u_2}$, a contradiction. So $|A_{11}|\ge 2$.
If there is a vertex in $A_{11}$, say $v_{11}$, such that $d_I(v_{11})=2$, then  $G+u_2v_{11}\in \mathbb{G}_n$, and we have by Lemma \ref{addedges} that  $\rho(G+u_2v_{11})>\rho(G)$, a contradiction. So  $d_I(v)=3$ for any $v\in A_{11}$.
Let $N_I(v_{11})=\{u_1,u_4,u'\}$. By Claim \ref{adj}, we have $B_{22}\subset N_G(u_4)\cup N_G(u')$. As $A_{11}\cup B_{jj}\subseteq N_G(u_4)$, we have $\rho(G)(x_{u_4}+x_{u'})\ge \sum_{v\in A_{11}\cup B_{22}}x_v+x_{v_{11}}$.
Then
\begin{align*}
\rho(G)(1+\rho(G))(x_{v_{11}}-x_{v_0})& =\rho(G)(x_{u_4}+x_{u'}-x_{u_2}-x_{u_3})\\
& \ge\sum_{v\in A_{11}\cup B_{22}}x_v+x_{v_{11}}-\sum_{v\in B_{22}}x_v-2x_{v_0}\\
& >2(x_{v_{11}}-x_{v_0}),
\end{align*}
so $x_{v_{11}}>x_{v_0}$, a contradiction.
It follows that  $d_G(u_3)\ge2$, so $d_G(u_i)\ge 2$.

Now, we show that $|B|=1$.
Suppose that $|B|\ge2$.

Suppose that $B_{jj}\ne\emptyset$. Then for any $v\in B_{jj}$, $d_I(v)=3$. Otherwise, let $d_I(v_{j1})\le 2$. Note that $u_4\in N_I(v_{j1})$ and $N_G(u_4)\cup N_G(u_i)=K$. So $G+u_iv_{j1}=(K,I)$ that  is not Hamiltonian by Lemma \ref{DI1} as $d_G(u_i)\ge2$. Thus $G+u_iv_{j1}\in \mathbb{G}_n$. By Lemma \ref{addedges}, we have $\rho(G+u_iv_{j1})>\rho(G)$, a contradiction.
Let $N_I(v_{j1})=\{u_j, u_4, u_6\}$.
By Claim \ref{adj}, $A_{11}\cup B_{ii}\subset N_G(u_4)\cup N_G(u_6)$.
Recall that $B_{jj}\subset N_G(u_4)$.
Then $G-u_jv_{j1}+u_1v_{j1}\in \mathbb{G}_n$. By Lemma \ref{perron}, we have $\rho(G-u_jv_{j1}+u_1v_{j1})>\rho(G)$, a contradiction.
So $B_{jj}=\emptyset$.

Recall that $B_{22}\ne\emptyset$. Then $|B|=|B_{22}|\ge 2$.
Also, we have $|A|\ge 2$, as otherwise,
\[
(1+\rho(G))\rho(G)(x_{u_1}-x_{u_2})\le(1+\rho(G))(x_{v_{11}}-x_{v_{21}}-x_{v_{22}})<x_{u_1}-x_{u_2},
\]
implying that $x_{u_1}<x_{u_2}$, a contradiction.

Next we want to show that $B_{22}\subset N_G(u_4)$.
Suppose that this is not true. Then there is a vertex $v_{22}\in B_{22}$ such that $u_4\notin N_I(v_{22})$.
If $d_I(v_{22})\le2$, then $G+u_1v_{22}\in \mathbb{G}_n$, and by Lemma \ref{addedges}, we have $\rho(G+u_1v_{22})>\rho(G)$, a contradiction.
So $d_I(v_{22})=3$. Let $N_I(v_{22})=\{u_2, u', u''\}$ with $x_{u'}\le x_{u''}$.
Then $2x_{u'}\le x_{u'}+x_{u''}\le x_{u_1}+x_{u_3}\le2x_{u_1}$, i.e., $x_{u'}\le x_{u_1}$.
As $G$ is $K_{1,4}$-free and $|A|\ge 2$, from Claim \ref{NOTE}, we have $|A|=2$, and we can assume that $u'\in N_I(v_{11})$ and $u''\in N_I(v_{12})$.
Then  $G-u'v_{22}+u_1v_{22}\in \mathbb{G}_n$, and by Lemma \ref{perron}, we have $\rho(G-u'v_{22}+u_1v_{22})>\rho(G)$, a contradiction.

Note that $G-u_2v_{21}+u_1v_{21}$ is $K_{1,4}$-free as $B_{22}\subset N_G(u_4)$. So $G-u_2v_{21}+u_1v_{21}\in \mathbb{G}_n$, and by Lemma \ref{perron}, we have $\rho(G-u_2v_{21}+u_1v_{21})>\rho(G)$,  a contradiction.
So $|B|=1$, as desired.

Recall that $B_{22}\ne\emptyset$.
Then $i=2$.

We claim that $d_I(v)=3$ for any $v\in A_{11}$.
Otherwise, assume that $d_I(v_{11})=2$, then $G+u_3v_{11}\in \mathbb{G}_n$, and by Lemma \ref{addedges}, we have $\rho(G+u_3v_{11})>\rho(G)$, a contradiction.

Now we show that $u_5\notin N_I(v)$ for any $v\in A_{11}$. Otherwise,
there exists $v\in A_{11}$ such that $u_5\in N_I(v)$, say $v_{11}$. Then by Lemma \ref{DI1}, we have $|A|\ge2$. As $2x_{u_5}\le x_{u_4}+x_{u_5}\le x_{u_2}+x_{u_3}\le 2x_{u_2}$, we have $x_{u_5}\le x_{u_2}$. Note that $G-u_5v_{11}+u_2v_{11}\in \mathbb{G}_n$, and by Lemma \ref{perron}, we have $\rho(G-u_5v_{11}+u_2v_{11})>\rho(G)$, a contradiction.

By Claim \ref{NOTE}, we have $G\cong\Gamma_{n, |I|}^*$ as displayed in Fig. \ref{flem11}, which is impossible as we have by Lemma \ref{I4} that $\rho(\Gamma_{n, |I|}^*)<\rho(\Gamma_{n,|I|})$.

%Now, we claim that for any vertex $v\in A_{11}\cup B_{jj}$, $d_I(v)=3$.
%Otherwise, for some $v'\in A_{11}\cup B_{jj}$, $d_I(v')=2$.
%Let $r=j$ if $v'\in A_{11}$, and $r=1$ otherwise.
%Note that $G+v'u_r$ is a connected $K_{1,4}$-free split graph of type $(K,I)$ and is not Hamiltonian.
%By Lemma \ref{addedges}, we have $\rho(G+v_2u_r)>\rho(G)$, a contradiction.
%
%
%Finally, we claim that $|B|=1$. Otherwise, let $v''\in B\setminus\{v_1\}$ with $u_s\in N_I(v'')$, where $s=2, 3$. Then $G-v''u_s+v''u_1$ is a connected $K_{1,4}$-free split graph of type $(K,I)$ and is not Hamiltonian. By Lemma \ref{perron}, in view of  $x_{u_1}\ge x_{u_s}$, we have $\rho(G-v''u_s+v''u_1)>\rho(G)$, a contradiction.
%
%
%Note that $x_{u_2}\ge x_{u_3}$. By Lemma \ref{perron},  we have $i=2$, so $B_{jj}=\emptyset$.
%By Lemma \ref{DI1}, we have $|I|\ge6$.
%Thus, from the above argument and Claim \ref{NOTE}, we have $G\cong\Gamma_{n, |I|}^*$ as displayed in Fig. \ref{flem}, which is impossible as we have by Lemma \ref{I4} that $\rho(\Gamma_{n, |I|}^*)<\rho(\Gamma_{n,|I|})$.

\noindent{\bf Case 2.2.} $t=1$.

\noindent{\bf Case 2.2.1.} $\exists v'\in A_{12}\cup A_{13}\cup B_{23}$ such that $d_I(v')=2$.

\noindent{\bf Case 2.2.1.1.} There is $v_1\in  A_{12}$ such that $d_I(v_1)=2$.

Firstly, we show that $d_G(u_3)\ge2$.
Otherwise,  $d_G(u_3)=1$, $|K'|=1$ and $A_{13}\cup B_{23}\cup B_{33}=\emptyset$. So $A=A_{11}\cup A_{12}$ and $K\setminus N_G(u_1)=B_{22}\ne\emptyset$.
Since $\rho(G)(x_{u_1}-x_{u_2})=\sum_{v\in A_{11}}x_v-\sum_{v\in B_{22}}x_v\ge0$, we have $A_{11}\ne\emptyset$.
Since $|I|\ge4$ and  $d_G(u_3)=1$, it  is easy to see that $G+u_4v_1\in \mathbb{G}_n$.  By Lemma \ref{addedges}, we have $\rho(G+u_4v_1)>\rho(G)$, a contradiction.

Next, we show that $A_{11}\cup B_{22}=\emptyset$.
Otherwise, $|A_{11}\cup B_{22}|\ge1$, and so $G+u_3v_1=(K,I)$.
As $d_G(u_3)\ge2$, we have by  Lemma \ref{DI1} that $G+u_3v_1$ is not Hamiltonian, so $G+u_3v_1\in \mathbb{G}_n$. By Lemma \ref{addedges}, $\rho(G+u_3v_1)>\rho(G)$, a contradiction.

Now we show that $B_{23}=\emptyset$. Otherwise, suppose that $B_{23}=\{v_2\}$.
As $A_{11}\cup B_{22}=\emptyset$, we have $\rho(G)(x_{u_1}-x_{u_2})=\sum_{v\in A_{13}}x_v-x_{v_2}\ge0$, so $A_{13}\ne\emptyset$.
Then $|A_{12}|=|A_{13}|=|B_{23}|=1$. 
Suppose that $B_{33}\ne\emptyset$. As $d_I(v_1)=2$ and $G$ is $K_{1,4}$-free, $d_I(v)\le2$ for any $v\in B_{33}$.
Then it is easy to see that $G+u_1v_{31}\in \mathbb{G}_n$, and we have by Lemma \ref{addedges} that $\rho(G+u_1v_{31})>\rho(G)$, a contradiction.
So $B_{33}=\emptyset$. 
Thus, $K=K'\cup A_{12}\cup A_{13}\cup B_{23}$. 
Let $A_{13}=\{v_3\}$.
Since $\rho(G)(x_{u_2}-x_{u_3})=x_{v_1}-x_{v_3}$ and
$(\rho(G)+1)(x_{v_1}-x_{v_3})\le x_{u_2}-x_{u_3}$, we have $(\rho^2(G)+\rho(G)-1)(x_{u_2}-x_{u_3})\le0$, i.e., $x_{u_2}=x_{u_3}$ and $d_I(v_3)=2$.
Moreover, we have $d_I(v_2)=2$ as $(1+\rho(G))\rho(G)(x_{u_1}-x_{u_2})=(1+\rho(G))(x_{v_3}-x_{v_2})\le x_{u_1}-x_{u_2}$.
Thus, $d_I(v_i)=2$ for any $i=1,2,3$, implying that $|I|=3$, a contradiction.

As $B_{23}=\emptyset$, we have $K\setminus N_G(u_1)=B_{33}\ne\emptyset$. 
And $d_I(v)\le 2$ for any $v\in B_{33}$ as $G$ is $K_{1,4}$-free and $d_I(v_1)=2$.
If $|A_{13}\cup B_{33}|\ge2$, then $G+u_2v_{31}\in \mathbb{G}_n$, and by Lemma \ref{addedges}, $\rho(G+u_2v_{31})>\rho(G)$, a contradiction. So $B_{33}=\{v_{31}\}$, and $A=A_{12}=\{v_1\}$.
Thus, $G\cong\Gamma_{n,4}$ displayed in Fig. \ref{f3}.

Suppose next that $B_{23}\ne\emptyset$.

\noindent{\bf Case 2.2.1.2.} There is $v_1\in B_{23}$ such that $d_I(v_1)=2$.

Firstly, we show that  $B_{22}\cup B_{33}=\emptyset$.
Otherwise, it is easy to see that $G+u_1v_1\in\mathbb{G}_n$, and we have by Lemma \ref{addedges} that $\rho(G+u_1v_1)>\rho(G)$, a contradiction.

Next, we show that $A_{13}\ne\emptyset$.
Otherwise,  $K\setminus N_G(u_2)=A_{11}\ne\emptyset$. 
And $d_I(v)\le2$ for any $v\in A_{11}$ as $G$ is $K_{1,4}$-free and $d_I(v_1)=2$.
Suppose that $|A|\ge2$. Then $G+u_3v_{11}=(K,I)$, and it is $K_{1,4}$-free as $B_{22}=\emptyset$.
As $d_G(u_3)\ge2$, we have by Lemma \ref{DI1} that $G+u_3v_{11}\in\mathbb{G}_n$. By Lemma \ref{addedges}, $\rho(G+u_3v_{11})>\rho(G)$, a contradiction.
So $|A|=|A_{11}|=1$.
Since $|I|\ge4$, $B_{22}\cup B_{33}=\emptyset$ and $d_I(v_{11})\le2$, we have $d_I(v_{11})=2$ and $|I|=4$. Then $u_4\in N_I(v_{11})$.
Note that $\rho(G)(x_{u_1}-x_{u_2})=x_{v_{11}}-x_{v_1}\ge0$. Then
\[
\rho(G)(\rho(G)+1)(x_{v_{11}}-x_{v_1})=\rho(G)(x_{u_1}+x_{u_4}-x_{u_2}-x_{u_3})<2\left(x_{v_{11}}-x_{v_1}\right),
\]
%Then
%\[
%(\rho^2(G)+\rho(G)-2)(x_{v_{11}}-x_{v_1})<0,
%\]
%i.e., $(\rho^2(G)+\rho(G)-2)(x_{v_{11}}-x_{v_1})<0$,
%so $x_{v_{11}}-x_{v_1}<0$, implying that $x_{u_1}<x_{u_2}$,
a contradiction.

As $x_{u_2}\ge x_{u_3}$ and $B_{22}=\emptyset$, we have $A_{12}\ne\emptyset$,
so $|B_{23}|=|A_{12}|=|A_{13}|=1$.

Now we show that $A_{11}=\emptyset$.
Otherwise, we have $d_I(v)\le2$ for any $v\in A_{11}$ as $d_I(v_1)=2$ and $G$ is $K_{1,4}$-free.
By Lemma \ref{DI1} in view of $|A_{13}|=1$ and  $B_{33}=\emptyset$, it is easy to see that $G+u_2v_{11}\in \mathbb{G}_n$.  By Lemma \ref{addedges}, we have $\rho(G+u_2v_{11})>\rho(G)$, a contradiction.

Thus, $K=K'\cup A_{12}\cup A_{13}\cup B_{23}$.
Let $A_{1i}=\{v_{i}\}$ for $i=2,3$.
As $|I|\ge4$, we have by Lemma \ref{DI1} that $d_I(v_2)=3$. Otherwise, $d_I(v_3)=3$, so $(\rho(G)+1)\rho(G)(x_{u_2}-x_{u_3})=(\rho(G)+1)(x_{v_2}-x_{v_3})<x_{u_2}-x_{u_3}$, a contradiction. Thus $3=d_I(v_2)\ge d_I(v_3)\ge d_I(v_1)=2$, implying that $|I|\le5$.
If $|I|=4$, then by Lemma \ref{DI1}, we have $G\cong\Gamma_{n,4}'$ displayed in Fig. \ref{abc}, but by Lemma \ref{I44}, $\rho(G)<\rho(\Gamma_{n,4})$, a contradiction;
if $|I|=5$, then $G\cong\Gamma_{n,5}'$ displayed in Fig. \ref{f22}, but by Lemma \ref{I5}, $\rho(G)<\rho(\Gamma_{n,5})$, also a contradiction.

\noindent{\bf Case 2.2.1.3.} There is $v_1\in A_{13}$ such that $d_I(v_1)=2$.

Similarly as in Case 2.2.1.2, we have  $A_{11}\cup B_{33}=\emptyset$, $B_{23}\ne \emptyset$.
%Suppose that $A_{11}\cup B_{33}\ne\emptyset$.
%Then it is easy to see that $G+u_2v_1$ is a connected $K_{1,4}$-free split graph of type $(K,I)$, and is not Hamiltonian as $d_G(u_2)\ge2$ and Lemma \ref{DI1}, so by Lemma \ref{addedges}, we have $\rho(G+u_2v_1)>\rho(G)$, a contradiction.
%So $A_{11}\cup B_{33}=\emptyset$.
%
%Suppose that $B_{23}=\emptyset$.
%Then $K\setminus N_G(u_1)=B_{22}\ne\emptyset$.
%As $G$ is $K_{1,4}$-free and $d_I(v_1)=2$, we have $d_I(v)\le2$ for and $v\in B_{22}$.
%Suppose that $|B_{22}\cup A_{12}|\ge2$.
%Then it is easy to see that $G+u_3v_{21}$ is a connected $K_{1,4}$-free split graph of type $(K,I)$, and is not Hamiltonian as $d_G(u_3)\ge2$ and Lemma \ref{DI1},  so by Lemma \ref{addedges}, we have $\rho(G+u_3v_{21})>\rho(G)$, a contradiction.
%So $B_{22}=\{v_{21}\}$ and $A_{12}=\emptyset$.
%As $|I|\ge4$ and $d_I(v_{21})\le2$, we have $d_I(v_{21})=2$. Let $N_I(v_{21})=\{u_2, u_4\}$.
%Note that $\rho(G)(x_{u_2}-x_{u_3})=x_{v_{21}}-x_{v_1}$ and $\rho(G)(\rho(G)+1)(x_{v_{21}}-x_{v_1})=\rho(G)(x_{u_2}+x_{u_4}-x_{u_1}-x_{u_3})<\rho(G)(x_{u_2}-x_{u_3})+x_{v_{21}}-x_{v_1}$.
%Then
%\[
%(\rho^2(G)+\rho(G)-2)(x_{v_{21}}-x_{v_1})<0,
%\]
%%i.e., $(\rho^2(G)+\rho(G)-2)(x_{v_{21}}-x_{v_1})<0$,
%so $x_{v_{21}}<x_{v_1}$, implying that $x_{u_2}<x_{u_3}$, a contradiction.
%It follows that  $B_{23}\ne\emptyset$.
As $x_{u_1}\ge x_{u_3}$ and $A_{11}=\emptyset$, we have $A_{12}\ne\emptyset$, so $|A_{12}|=|B_{23}|=|A_{13}|=1$.
Similarly as above, we have  $B_{22}=\emptyset$, and
%
%Suppose that $B_{22}\ne\emptyset$. As $d_I(v_1)=2$ and $G$ is $K_{1,4}$-free, $d_I(v)\le2$ for any $v\in B_{22}$.
%Then it is easy to see that $G+u_1v_{21}$ is a connected $K_{1,4}$-free split graph of type $(K,I)$ and is not Hamiltonian by Lemma \ref{DI1}  as $d_G(u_1)\ge2$, and by Lemma \ref{addedges}, we have $\rho(G+u_1v_{21})>\rho(G)$, a contradiction.
%So $B_{22}=\emptyset$.
%
$K=K'\cup A_{12}\cup A_{13}\cup B_{23}$.
Let $A_{12}=\{v_2\}$ and $B_{23}=\{v_3\}$. As above, we have  $|I|=5$.
%
%%As $|I|\ge4$, by Lemma \ref{DI1}, we have $d_I(v_2)=3$.  Otherwise, $d_I(v_3)=3$, so $(\rho(G)+1)\rho(G)(x_{u_1}-x_{u_2})=(\rho(G)+1)(x_{v_1}-x_{v_3})<x_{u_1}-x_{u_2}$, a contradiction. Thus $3=d_I(v_2)\ge d_I(v_3)\ge d_I(v_1)=2$, implying that $|I|\le5$.
%
%If $|I|=4$, then by Lemma \ref{DI1}, we have $G\cong\Gamma_{n,4}'$ displayed in Fig. \ref{abc}, but by Lemma \ref{I44}, we have $\rho(G)<\rho(\Gamma_{n,4})$, a contradiction;
But then  $(\rho(G)+1)\rho(G)(x_{u_1}-x_{u_2})=(\rho(G)+1)(x_{v_1}-x_{v_3})<x_{u_1}-x_{u_2}$, a contradiction.

\noindent{\bf Case 2.2.2.} $\forall v\in A_{12}\cup A_{13}\cup B_{23}$, $d_I(v)=3$.

%\begin{Claim}\label{C222}
%If $|A_{1i}|\ge2$ for $i=2$ or $3$, then for any $v', v''\in A_{12}\cup A_{13}\cup B_{23}$, there is $N_I(v')\setminus\{u_1, u_2, u_3\}\ne N_I(v'')\setminus\{u_1, u_2, u_3\}$.
%\end{Claim}
%
%\begin{proof}
%Let $N_I(v')\setminus\{u_1, u_2, u_3\}= N_I(v'')\setminus\{u_1, u_2, u_3\}=\{u'\}$.
%Let $j=\{2,3\}\setminus\{i\}$ if $v'\in A_{1i}$ or $j=1$ if $v'\in B_{23}$.
%By the definition of $u_1,u_2,u_3$, we have $x_{u'}\le x_{u_j}$.
%And we have $d_G(u_j)\ge2$, as otherwise, $j=3$, $|K'|=1$ and $v', v''\in A_{12}$, then we have $(\rho(G)+1)\rho(G)(x_{v'}-x_{v_0})=(\rho(G)+1)(x_{u'}-x_{u_3})\ge 2x_{v'}-x_{v_0}$, i.e., $(\rho^(G)+\rho(G)-1)(x_{v'}-x_{v_0})=x_{v'}>0$, implying that $x_{v_1}>x_{v_0}$, a contradiction.
%Then we can check that $G-u'v'+u_jv'$ is a connected $K_{1,4}$-free split graph of type $(K,I)$, and is not Hamiltonian as $d_G(u_j)\ge2$ and Lemma \ref{DI1}, and by Lemma \ref{perron}, we have $\rho(G-u'v'+u_jv')>\rho(G)$, a contradiction.
%\end{proof}

\noindent{\bf Case 2.2.2.1.} $|A_{12}|=|A_{13}|=|B_{23}|=1$.

Let $A_{12}=\{v_1\}$, $A_{13}=\{v_2\}$ and $B_{23}=\{v_3\}$.
Let $u_4\in N_I(v_1)\setminus \{u_1,u_2\}$. 

We show that $d_G(u_4)=1$. 
Otherwise, $d_G(u_4)\ge2$. 
Then $A_{11}\cup B_{22}=\emptyset$, as otherwise $G-u_4v_1+u_3v_1=(K,I)$, and we can check that  $G-u_4v_1+u_3v_1\in \mathbb{G}_n$, so by Lemma \ref{perron} in view of $x_{u_3}\ge x_{u_4}$, we have $\rho(G-u_4v_1+u_3v_1)>\rho(G)$, a contradiction.
And $B_{33}=\emptyset$, as otherwise $B_{33}\subset N_G(u_4)$ by Claim \ref{adj}, and we can check that $G-u_4v_{31}+u_1v_{31}\in \mathbb{G}_n$, so we have by Lemma \ref{perron} that $\rho(G-u_4v_{31}+u_1v_{31})>\rho(G)$, a contradiction.
By Lemma \ref{DI1} and as $d_G(u_4)\ge2$, we have $|I|=5$ and $d_G(u_5)=1$. 
%Note that $\rho(G)(x_{u_1}-x_{u_2})=x_{v_2}-x_{v_3}\ge0$.  
And $u_4\in N_I(v_2)$, as otherwise $u_4\in N_I(v_3)$ and $u_5\in N_I(v_2)$, then $\rho(G)(1+\rho(G))(x_{v_2}-x_{v_3})=\rho(G)(x_{u_1}+x_{u_5}-x_{u_2}-x_{u_4})
=2(x_{v_2}-x_{v_3})-x_{v_1}<2(x_{v_2}-x_{v_3})$, a contradiction.
So $G\cong\Gamma_{n,5}''$ (displayed in Fig. \ref{f22}).
By Lemma \ref{I5}, $\rho(G)<\rho(\Gamma_{n,5})$, a contradiction.
So $d_G(u_4)=1$.

Let $u_5\in N_I(v_2)\setminus \{u_1,u_3\}$. Similarly as above, we have $d_G(u_5)=1$.

Let $u_6\in N_I(v_3)\setminus \{u_2,u_3\}$.
Suppose that $d_G(u_6)\ge2$. Similarly as above, we have $B_{22}\cup B_{33}=\emptyset$. Suppose that $A_{11}\ne\emptyset$. Then $A_{11}\subset N_G(u_6)$ by Claim \ref{adj}, and $d_I(v)=3$ for any $v\in A_{11}$, as otherwise let $d_I(v_{11})=2$, and we can check that $G+u_2v_{11}\in \mathbb{G}_n$, so we have by Lemma \ref{addedges} that $\rho(G+u_2v_{11})>\rho(G)$, a contradiction.
Let $G'=G-u_6v_{11}+u_2v_{11}$ if $x_{u_2}\ge x_{u_6}$, and $G'=G-u_2v_1+u_6v_1$ otherwise.
Then $G'\in\mathbb{G}_n$, but by Lemma \ref{perron}, $\rho(G)<\rho(G')$, a contradiction.
So $d_G(u_6)=1$.
%Similarly, the degree of the vertex in $N_G(v_i)\setminus\{u_1,u_2,u_3\}$ is equal to $1$ for each $i=2,3$, and

As $G$ is $K_{1,4}$-free, we have $A_{11}=B_{22}=B_{33}=\emptyset$. So $G\cong\Gamma_{n,6}$.

\noindent{\bf Case 2.2.2.2.} At least one of $A_{12}, A_{13}, B_{23}$ is empty.

\begin{Claim} \label{A12}
$A_{12}\ne\emptyset$. 
\end{Claim}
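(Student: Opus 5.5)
Suppose, to the contrary, that $A_{12}=\emptyset$. Then $N_G(u_1)\cap N_G(u_2)=K'$, $A=A_{11}\cup A_{13}$ and $N_G(u_2)=K'\cup B_{22}\cup B_{23}$. Since $t=1$, we have $|A_{13}|,|B_{23}|\le 1$. By the hypothesis of Case 2.2.2, every vertex of $A_{13}\cup B_{23}$ has exactly three neighbors in $I$. The plan is first to pin down the shape of $K$ using the eigen-equations together with $x_{u_1}\ge x_{u_2}\ge x_{u_3}$. Then I will exhibit a graph in $\mathbb{G}_n$ with larger spectral radius, using either Lemma \ref{addedges} or Lemma \ref{perron}, which contradicts the maximality of $\rho(G)$.

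\textbf{Step 1 (structure).} From $A(G)\mathbf{x}=\rho\mathbf{x}$ at $u_1,u_2,u_3$ I get
\[
\rho(x_{u_1}-x_{u_2})=\sum_{v\in A_{11}\cup A_{13}}x_v-\sum_{v\in B_{22}\cup B_{23}}x_v\ge 0,
\qquad
\rho(x_{u_2}-x_{u_3})=\sum_{v\in B_{22}}x_v-\sum_{v\in A_{13}\cup B_{33}}x_v\ge 0 .
\]
Recall that $K\setminus N_G(u_2)=A_{11}\cup A_{13}\cup B_{33}\neq\emptyset$ and $K\setminus N_G(u_3)=A_{11}\cup B_{22}\neq\emptyset$. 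From these two inequalities I will derive $A_{11}\ne\emptyset$, and also $B_{22}\ne\emptyset$ whenever $A_{13}\cup B_{33}\ne\emptyset$. Next I will locate the degree-one vertex of $I$ given by Lemma \ref{DI1}. We already know $d_G(u_1),d_G(u_2)\ge 2$, so this vertex is either $u_3$, which forces $|K'|=1$ and $A_{13}=B_{23}=B_{33}=\emptyset$, or some $u\notin\{u_1,u_2,u_3\}$.

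\textbf{Step 2 (the transformation).} I will take a vertex $v\in A_{11}$. If $d_I(v)\le 2$, I set $G'=G+u_2v$ and use Lemma \ref{addedges}. If $d_I(v)=3$, write $N_I(v)=\{u_1,u',u''\}$ with $x_{u'}\le x_{u''}$. The maximality of $x_{u_1}+x_{u_2}+x_{u_3}$ in the choice of $v_0$ gives $x_{u'}+x_{u''}\le x_{u_2}+x_{u_3}$, and hence $x_{u'}\le x_{u_2}$. I then set $G'=G-u'v+u_2v$ and apply Lemma \ref{perron}. The same kind of move is available from the $B_{22}$ side, moving an edge onto $u_1$, if that is more convenient in a given subcase. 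Claim \ref{NOTE} and Claim \ref{adj} control which vertices of $I$ can sit over $A_{11}$, and I will use them to choose $v$ so that the modified graph stays $K_{1,4}$-free. The point is that no vertex of $K$ may acquire four pairwise nonadjacent neighbors, which I check with Claim \ref{adj} applied to the vertex $v_0$ and to $v$.

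\textbf{Main obstacle.} The delicate part is verifying $G'\in\mathbb{G}_n$, and in particular that $G'$ is still non-Hamiltonian. My approach is to keep some vertex of $I$ of degree $1$ untouched by the move, which is possible because Lemma \ref{DI1} supplies such a vertex. The troublesome case is when the only degree-one vertex is $u'$ itself, or $u'$ has degree $2$ so that removing $u'v$ disconnects it. In that case I will instead compare entries directly via the eigen-equations, as in Case 2.2.2.1: for example, I will show $x_v>x_{v_0}$, which contradicts the choice of $v_0$, or $x_{u_1}<x_{u_2}$. If all of this works, every subcase ends in a contradiction, so $A_{12}\neq\emptyset$.
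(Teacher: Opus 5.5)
There is a genuine gap, and it sits exactly at the step you flag as the main obstacle; the fallback you propose there cannot close it. Look at the configuration the argument actually has to defeat. Once $A_{13}=\emptyset$ is known, $t=1$ gives $B_{23}=\{v''\}$ with $N_I(v'')=\{u_2,u_3,u''\}$. Claim \ref{adj} then gives $A_{11}\subseteq N_G(u'')$, and after clearing $B_{22}\cup B_{33}$ and the case $|A_{11}|=1$ one has $K=K'\cup A_{11}\cup\{v''\}$ with $|A_{11}|\ge 2$. The vertices of $K'$ and $v''$ already have three $I$-neighbours. So by Claim \ref{NOTE}, every vertex of $I\setminus\{u_1,u_2,u_3,u''\}$ is a pendant vertex attached to a single $v\in A_{11}$.

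For $v\in A_{11}$ with $N_I(v)=\{u_1,u'',w\}$ you get $x_w=x_v/\rho<x_{u''}$, since $\rho x_{u''}=\sum_{z\in A_{11}}x_z+x_{v''}$. So your $u'$ is the pendant $w$, and $G-wv+u_2v$ is disconnected. If some $v\in A_{11}$ has $d_I(v)=2$, your $G+u_2v$ does work. But when all of them have $d_I(v)=3$ (compatible with $|K|\ge|I|$ once $|K'|\ge 3$), Step 2 produces no graph in $\mathbb{G}_n$.

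Comparing entries does not rescue this. The choice of $v_0$ gives $(\rho+1)(x_v-x_{v_0})=x_{u''}+x_w-x_{u_2}-x_{u_3}\le 0$, and nothing forces $\rho(x_{u_1}-x_{u_2})=\sum_{z\in A_{11}}x_z-x_{v''}$ to be negative. The missing idea is to move an edge at $u''$ instead, whose degree is $|A_{11}|+1\ge 3$. Since $x_{u''}\le x_{u_2}$ is not available, the paper splits on the comparison. If $x_{u''}<x_{u_2}$ it uses $G-u''v_{11}+u_2v_{11}$. If $x_{u''}\ge x_{u_2}$ it moves an edge the other way, $G-u_2v_0+u''v_0$; this needs $|K'|\ge 2$ so that $u''$ does not become complete to $K$.

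Several further checks are missing. First, you treat non-Hamiltonicity as the only delicate condition, but $G'$ must also keep $K$ a maximum clique. Since $K\setminus N_G(u_2)=A_{11}\cup A_{13}\cup B_{33}$, adding $u_2v$ makes $u_2$ complete to $K$ when $A_{11}=\{v\}$ and $A_{13}\cup B_{33}=\emptyset$. The paper kills $|A_{11}|=1$ separately: Lemma \ref{DI1} forces $N_I(v_{11})=\{u_1,u'',u_5\}$ with $d_G(u_5)=1$, and then $\rho(\rho+1)(x_{v_{11}}-x_{v''})<2(x_{v_{11}}-x_{v''})$, contradicting $x_{u_1}\ge x_{u_2}$.

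Second, Step 1 does not deliver $A_{11}\neq\emptyset$ from your two identities. If $A_{11}=\emptyset$ and $A_{13}=\{v'\}$, combining them only yields $B_{23}=B_{33}=\emptyset$ and $x_{v'}=\sum_{z\in B_{22}}x_z$, which is no contradiction. The paper first proves $A_{13}=\emptyset$ by a separate argument:
\begin{enumerate}
\item Claim \ref{adj} puts $B_{22}$ inside $N_G(u')$ for the third $I$-neighbour $u'$ of $v'$.
\item An eigen-equation rules out $|B|=1$.
\item The Lemma \ref{perron} switch $G-u'v_{21}+u_1v_{21}$ or $G-u'v'+u_2v'$ rules out $|B|\ge 2$.
\item $B_{22}=\emptyset$ gives $\rho(x_{u_2}-x_{u_3})\le -x_{v'}<0$.
\end{enumerate}
Only afterwards does it read $A_{11}\ne\emptyset$ off $\rho(x_{u_1}-x_{u_2})\ge 0$. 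Likewise it clears $B_{22}\cup B_{33}$ via $G-u''v''+u_1v''$ before touching $A_{11}$. Without that step, the new $I$-neighbourhood of $v$ need not dominate $B_{33}$, and your switch can create a $K_{1,4}$ at $v$.

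Finally, a small error: deleting one edge at a vertex of degree $2$ leaves it with degree $1$, so it disconnects nothing.
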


\begin{proof}
Suppose that $A_{12}=\emptyset$.

We first show that $A_{13}=\emptyset$.
Suppose that $A_{13}=\{v'\}$. Let $u'\in N_G(v')\setminus\{u_1,u_3\}$.
Then $K\setminus N_G(u_3)=A_{11}\cup B_{22}\ne\emptyset$.
Suppose that $B_{22}\ne\emptyset$. Then $B_{22}\subset N_G(u')$ by Claim \ref{adj}.
If $|B|=|B_{22}|=1$, then $\rho(x_{u_2}-x_{u_3})=x_{v_{21}}-x_{v'}\ge0$, and $\rho(G)(\rho(G)+1)(x_{v_{21}}-x_{v'})=\rho(G)\left(x_{u_2}+\sum_{u\in N_I(v_{21})\setminus\{u_2,u'\}}x_u-x_{u_1}-x_{u_3}\right)<2(x_{v_{21}}-x_{v'})$, a contradiction.
So $|B|\ge2$.
Let $G'=G-u'v_{21}+u_1v_{21}$ if $B_{33}=\emptyset$, and $G'=G-u'v'+u_2v'$ otherwise.
Then we can check that $G'\in\mathbb{G}_n$.
By Lemma \ref{perron} in view of $x_{u_1}\ge x_{u_2}\ge x_{u'}$, we have $\rho(G')>\rho(G)$, a contradiction. So $B_{22}=\emptyset$.
Then we have $\rho(G)(x_{u_2}-x_{u_3})\le -x_{v'}<0$, a contradiction.
%So $A_{13}=\emptyset$.

As $A_{12}=A_{13}=\emptyset$ and $t=1$, we have $|B_{23}|=1$.
Let $B_{23}=\{v''\}$, and $u''\in N_I(v'')\setminus \{u_2,u_3\}$.
As $x_{u_1}\ge x_{u_2}$, $A_{12}\cup A_{13}=\emptyset$ and $|B_{23}|=1$, we have $A_{11}\ne\emptyset$. Then $A_{11}\subset N_G(u'')$ by Claim \ref{adj}. 

Next we show that $B_{22}\cup B_{33}=\emptyset$ and $|A_{11}|\ge2$.
If $B_{22}\cup B_{33}\ne\emptyset$, then $G-u''v''+u_1v''=(K,I)$, and we can check that $G-u''v''+u_1v''\in\mathbb{G}_n$, so we have by Lemma \ref{perron} in view of $x_{u_1}\ge x_{u''}$ that $\rho(G-u''v''+u_1v'')>\rho(G)$, a contradiction. 
%So $B_{22}\cup B_{33}=\emptyset$. 
If $|A_{11}|= 1$, then $A_{11}=\{v_{11}\}$, and $|K'|=|K|-2\ge|I|-2\ge2$. By Lemma \ref{DI1}, we have $d_I(v_{11})=3$, and let $u_5\in N_I(v_{11})\setminus\{u_1,u''\}$, then $d_G(u_5)=1$.
%Note that $\rho(G)(x_{u_1}-x_{u_2})=x_{v_{11}}-x_{v''}\ge0$. Then we have %$\rho(G)(x_{u_5}-x_{u_3})<x_{v_{11}}-x_{v''}$ and
So $\rho(G)(\rho(G)+1)(x_{v_{11}}-x_{v''})=\rho(G)(x_{u_1}+x_{u_5}-x_{u_2}-x_{u_3})<2(x_{v_{11}}-x_{v''})$,
%\[
%(\rho^2(G)+\rho(G)-2)(x_{v_{11}}-x_{v''})<0,
%\]
%so $x_{v_{11}}<x_{v''}$, implying that $x_{u_1}<x_{u_2}$,
a contradiction.

Since $\rho(G)(\rho(G)+1)(x_{v_0}-x_{v''})=\rho(x_{u_1}-x_{u''})>x_{v_0}-x_{v''}$, we have $|K'|\ge 2$.
Let $G'=G-u_2v_0+u''v_0$ if $x_{u_2}\le x_{u''}$, and $G'=G-u''v_{11}+u_2v_{11}$ otherwise. Then we can check that $G'\in\mathbb{G}_n$ as $|K'|\ge 2$, $K=K'\cup A_{11}\cup B_{23}$, $|A_{11}|\ge2$ and by Lemma \ref{DI1}.
By Lemma \ref{perron}, $\rho(G')>\rho(G)$, a contradiction.
%Thus, $A_{12}\ne\emptyset$.
\end{proof}

From Claim \ref{A12}, let $A_{12}=\{v_1\}$ and $u_4\in N_G(v_1)\setminus\{u_1,u_2\}$.

Suppose first that $A_{13}=\emptyset$. 

%We claim that if $B_{33}\ne\emptyset$, then $G\cong\Gamma_{n,5}$.

We claim that $B_{33}\ne\emptyset$.
Suppose that $B_{33}=\emptyset$. Then $K\setminus N_G(u_1)=B_{22}\cup B_{23}\ne\emptyset$ and $K\setminus N_G(u_2)=A_{11}\ne\emptyset$.
Suppose that $B_{23}\ne\emptyset$. Let $B_{23}=\{v'\}$, and $u'\in N_G(v')\setminus\{u_2,u_3\}$.
Then $A_{11}\subset N_G(u')$ by Claim \ref{adj}.
If $B_{22}\ne \emptyset$, then $G-u'v'+u_1v'\in\mathbb{G}_n$, and we have by Lemma \ref{perron} that  $\rho(G-u'v'+u_1v')>\rho(G)$, a contradiction.
Thus $B_{22}= \emptyset$.
Let $G'=G-u'v_{11}+u_3v_{11}$ if $x_{u_3}\ge x_{u'}$, and $G'=G-u_3v_0+u'v_0$ otherwise.
Then $G'\in \mathbb{G}_n$, and by Lemma \ref{perron}, we have $\rho(G')>\rho(G)$, a contradiction.
Thus, $B_{23}=\emptyset$, so $K=K'\cup A_{11}\cup A_{12}\cup B_{22}$.

Suppose that $|A_{11}|\ge2$. Then $d_I(v)=3$ for any $v\in A_{11}$, as otherwise let $d_I(v_{11})\le2$, then $G+u_2v_{11}\in\mathbb{G}_n$, and by Lemma \ref{addedges}, $\rho(G+u_2v_{11})>\rho(G)$, a contradiction.
Let $u^*\in N_I(v_{11})\setminus\{u_1\}$ such that $x_{u^*}=\max\{x_u: u\in N_I(v_{11})\setminus\{u_1\}\}$. Together with Claim \ref{adj}, we have $B_{22}\subset N_G(u^*)$.
Let $G''=G-u^*v_{11}+u_2v_{11}$ if $x_{u_2}\ge x_{u^*}$, and $G''=G-u_2v_0+u^*v_0$ otherwise.
Then $G''\in \mathbb{G}_n$, and by Lemma \ref{perron}, we have $\rho(G'')>\rho(G)$, a contradiction.

Suppose that $|B_{22}|\ge2$. Then $d_I(v)=3$ for any $v\in B_{22}$, as otherwise let $d_I(v_{21})\le2$, then $G+u_1v_{21}\in \mathbb{G}_n$, and by Lemma \ref{addedges}, $\rho(G+u_1v_{21})>\rho(G)$, a contradiction.
Note that $G-u_2v_{21}+u_1v_{21}\in \mathbb{G}_n$, and by Lemma \ref{perron}, we have $\rho(G-u_2v_{21}+u_1v_{21})>\rho(G)$, a contradiction.

So $A_{12}=\{v_1\}$, $A_{11}=\{v_{11}\}$ and $B_{22}=\{v_{21}\}$, 
%By the above argument, $K=K'\cup A_{11}\cup A_{12}\cup B_{22}$ with $|A_{11}|=|A_{12}|=|B_{22}|=1$,
implying that $4\le|I|\le7$.
We claim that $u_4\notin N_I(v_{11})\cup N_I(v_{21})$. Otherwise, $G-u_4v_1+u_3v_1\in\mathbb{G}_n$, and by Lemma \ref{perron}, $\rho(G-u_4v_1+u_3v_1)>\rho(G)$, a contradiction.
If $|I|=4$, then $d_G(v_{11})=d_G(v_{21})=1$ by Lemma \ref{DI1}. And if $5\le|I|\le7$, then $G\cong\Gamma$ as displayed in Fig. \ref{ff8}.
For $|I|=4,5$, we can check that $G+u_3v_{11}\in\mathbb{G}_n$, and by Lemma \ref{addedges}, $\rho(G+u_3v_{11})>\rho(G)$, a contradiction. 
So $|I|=6,7$. 
Note that $\rho(G)(x_{u_3}-x_{u_5})\ge 2x_{v_0}-x_{v_{11}}-x_{v_{21}}\ge 0$ by the choice of $v_0$. So $x_{u_3}\ge x_{u_5}$. Then we can check that $G-u_5v_{11}+u_3v_{11}\in \mathbb{G}_n$, and by Lemma \ref{perron}, $\rho(G-u_5v_{11}+u_3v_{11})>\rho(G)$, a contradiction.

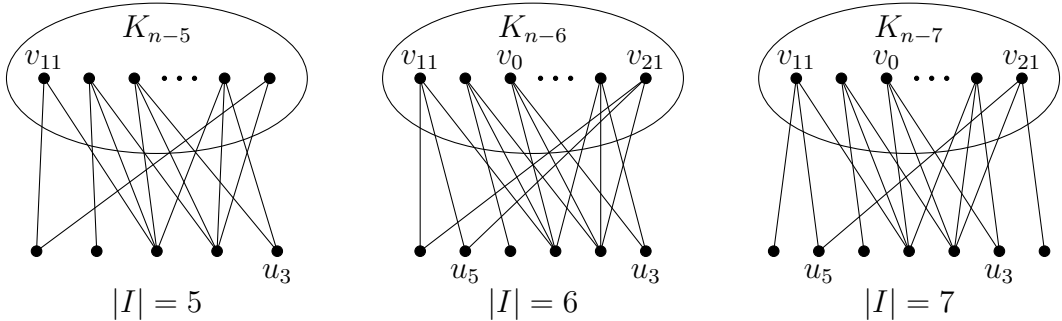
\begin{figure}[ht]
\centering
\begin{tikzpicture}
\draw (2,-0.8) ellipse (2 and 1);
\filldraw [black] (0.5,-0.8) circle (2pt);
\filldraw [black] (1.1,-0.8) circle (2pt);
\filldraw [black] (1.7,-0.8) circle (2pt);
\filldraw [black] (2.1,-0.8) circle (0.8pt);
\filldraw [black] (2.3,-0.8) circle (0.8pt);
\filldraw [black] (2.5,-0.8) circle (0.8pt);
\filldraw [black] (2.9,-0.8) circle (2pt);
\filldraw [black] (3.5,-0.8) circle (2pt);
\filldraw [black] (0.4,-3.1) circle (2pt);
\draw  [black](0.4,-3.1)--(0.5,-0.8);
\draw  [black](0.4,-3.1)--(3.5,-0.8);
\filldraw [black] (1.2,-3.1) circle (2pt);
\draw  [black](1.2,-3.1)--(1.1,-0.8);
\filldraw [black] (2.0,-3.1) circle (2pt);
\draw  [black](2.0,-3.1)--(0.5,-0.8);
\draw  [black](2.0,-3.1)--(1.1,-0.8);
\draw  [black](2.0,-3.1)--(1.7,-0.8);
\draw  [black](2.0,-3.1)--(2.9,-0.8);
\filldraw [black] (2.8,-3.1) circle (2pt);
\draw  [black](2.8,-3.1)--(1.1,-0.8);
\draw  [black](2.8,-3.1)--(1.7,-0.8);
\draw  [black](2.8,-3.1)--(2.9,-0.8);
\draw  [black](2.8,-3.1)--(3.5,-0.8);
\filldraw [black] (3.6,-3.1) circle (2pt);
\draw  [black](3.6,-3.1)--(1.7,-0.8);
\draw  [black](3.6,-3.1)--(2.9,-0.8);
\node at (2, -0.15) {$K_{n-5}$};
%\node at (0.4, -3.4) {$u_5$};
%\node at (1.2, -3.4) {$u_4$};
%\node at (2.0, -3.4) {$u_1$};
%\node at (2.8, -3.4) {$u_2$};
\node at (3.6, -3.4) {$u_3$};
\node at (0.5, -0.55) {$v_{11}$};
%\node at (1.1, -0.55) {$v_1$};
%\node at (1.7, -0.55) {$v_0$};
%\node at (3.5, -0.55) {$v_{21}$};
\node at (2, -3.8) {$|I|=5$};

\draw (7,-0.8) ellipse (2 and 1);
\filldraw [black] (5.5,-0.8) circle (2pt);
\filldraw [black] (6.1,-0.8) circle (2pt);
\filldraw [black] (6.7,-0.8) circle (2pt);
\filldraw [black] (7.1,-0.8) circle (0.8pt);
\filldraw [black] (7.3,-0.8) circle (0.8pt);
\filldraw [black] (7.5,-0.8) circle (0.8pt);
\filldraw [black] (7.9,-0.8) circle (2pt);
\filldraw [black] (8.5,-0.8) circle (2pt);
\filldraw [black] (5.5,-3.1) circle (2pt);
\draw  [black](5.5,-3.1)--(5.5,-0.8);
\draw  [black](5.5,-3.1)--(8.5,-0.8);
\filldraw [black] (6.1,-3.1) circle (2pt);
\draw  [black](6.1,-3.1)--(5.5,-0.8);
\draw  [black](6.1,-3.1)--(8.5,-0.8);
\filldraw [black] (6.7,-3.1) circle (2pt);
\draw  [black](6.7,-3.1)--(6.1,-0.8);
\filldraw [black] (7.3,-3.1) circle (2pt);
\draw  [black](7.3,-3.1)--(5.5,-0.8);
\draw  [black](7.3,-3.1)--(6.1,-0.8);
\draw  [black](7.3,-3.1)--(6.7,-0.8);
\draw  [black](7.3,-3.1)--(7.9,-0.8);
\filldraw [black] (7.9,-3.1) circle (2pt);
\draw  [black](7.9,-3.1)--(6.1,-0.8);
\draw  [black](7.9,-3.1)--(6.7,-0.8);
\draw  [black](7.9,-3.1)--(7.9,-0.8);
\draw  [black](7.9,-3.1)--(8.5,-0.8);
\filldraw [black] (8.5,-3.1) circle (2pt);
\draw  [black](8.5,-3.1)--(6.7,-0.8);
\draw  [black](8.5,-3.1)--(7.9,-0.8);
\node at (7, -0.15) {$K_{n-6}$};
%\node at (5.5, -3.4) {$u_6$};
\node at (6.1, -3.4) {$u_5$};
%\node at (6.7, -3.4) {$u_4$};
%\node at (7.3, -3.4) {$u_1$};
%\node at (7.9, -3.4) {$u_2$};
\node at (8.5, -3.4) {$u_3$};
\node at (5.5, -0.55) {$v_{11}$};
%\node at (6.1, -0.55) {$v_1$};
\node at (6.7, -0.55) {$v_0$};
\node at (8.5, -0.55) {$v_{21}$};
\node at (7, -3.8) {$|I|=6$};

\draw (12,-0.8) ellipse (2 and 1);
\filldraw [black] (10.5,-0.8) circle (2pt);
\filldraw [black] (11.1,-0.8) circle (2pt);
\filldraw [black] (11.7,-0.8) circle (2pt);
\filldraw [black] (12.1,-0.8) circle (0.8pt);
\filldraw [black] (12.3,-0.8) circle (0.8pt);
\filldraw [black] (12.5,-0.8) circle (0.8pt);
\filldraw [black] (12.9,-0.8) circle (2pt);
\filldraw [black] (13.5,-0.8) circle (2pt);
\filldraw [black] (10.2,-3.1) circle (2pt);
\draw  [black](10.2,-3.1)--(10.5,-0.8);
\filldraw [black] (10.8,-3.1) circle (2pt);
\draw  [black](10.8,-3.1)--(10.5,-0.8);
\draw  [black](10.8,-3.1)--(13.5,-0.8);
\filldraw [black] (11.4,-3.1) circle (2pt);
\draw  [black](11.4,-3.1)--(11.1,-0.8);
\filldraw [black] (12.0,-3.1) circle (2pt);
\draw  [black](12,-3.1)--(10.5,-0.8);
\draw  [black](12,-3.1)--(11.1,-0.8);
\draw  [black](12,-3.1)--(11.7,-0.8);
\draw  [black](12,-3.1)--(12.9,-0.8);
\filldraw [black] (12.6,-3.1) circle (2pt);
\draw  [black](12.6,-3.1)--(11.1,-0.8);
\draw  [black](12.6,-3.1)--(11.7,-0.8);
\draw  [black](12.6,-3.1)--(12.9,-0.8);
\draw  [black](12.6,-3.1)--(13.5,-0.8);
\filldraw [black] (13.2,-3.1) circle (2pt);
\draw  [black](13.2,-3.1)--(11.7,-0.8);
\draw  [black](13.2,-3.1)--(12.9,-0.8);
\filldraw [black] (13.8,-3.1) circle (2pt);
\draw  [black](13.8,-3.1)--(13.5,-0.8);
\node at (12, -0.15) {$K_{n-7}$};
\node at (10.8, -3.4) {$u_5$};
\node at (13.2, -3.4) {$u_3$};
\node at (10.5, -0.55) {$v_{11}$};
\node at (11.7, -0.55) {$v_0$};
\node at (13.5, -0.55) {$v_{21}$};
\node at (12, -3.8) {$|I|=7$};
\end{tikzpicture}
\caption{The structure of $\Gamma$.}
\label{ff8}
\end{figure}

So $B_{33}\ne\emptyset$. Then $B_{33}\subset N_G(u_4)$ by Claim \ref{adj}.
Then $A_{11}\cup B_{22}=\emptyset$, as otherwise $G-u_4v_1+u_3v_1\in \mathbb{G}_n$, and by Lemma \ref{perron} in view of $x_{u_3}\ge x_{u_4}$, $\rho(G-u_4v_1+u_3v_1)>\rho(G)$, a contradiction.
Note that  $|B|=1$, as otherwise $|B|\ge 2$, then $G-u_4v_{31}+u_1v_{31}\in \mathbb{G}_n$, and by Lemma \ref{perron}, $\rho(G-u_4v_{31}+u_1v_{31})>\rho(G)$, a contradiction.
By Lemma \ref{DI1}, we have $G\cong\Gamma_{n,5}$.

Suppose next that  $A_{13}\ne\emptyset$.
Let $A_{13}=\{v_2\}$, and $u_j\in N_G(v_2)\setminus\{u_1,u_2\}$, where $j\ge4$.
Then $B_{23}=\emptyset$.

Suppose that $B_{22}\ne\emptyset$.
Then we claim that $j\ge5$. Otherwise $G-u_4v_1+u_3v_1\in \mathbb{G}_n$, and by Lemma \ref{perron}, $\rho(G-u_4v_1+u_3v_1)>\rho(G)$, a contradiction.
Assume that $j=5$.
Then $B_{22}\subset N_G(u_5)$ by Claim \ref{adj}.
If $A_{11}\cup B_{33}\ne\emptyset$, then $G-u_5v_2+u_2v_2\in \mathbb{G}_n$, and by Lemma \ref{perron}, we have $\rho(G-u_5v_2+u_2v_2)>\rho(G)$, a contradiction.
So $A_{11}\cup B_{33}=\emptyset$.
And we claim that $d_I(v)=3$ for any $v\in B_{22}$, as otherwise let $d_I(v_{21})\le2$, then $G+u_3v_{21}\in \mathbb{G}_n$, and we have by Lemma \ref{addedges} that $\rho(G+u_3v_{21})>\rho(G)$, a contradiction.
If $|B_{22}|\ge2$, then $G-u_5v_{21}+u_1v_{21}\in \mathbb{G}_n$, and by Lemma \ref{perron} in view of $x_{u_5}\le x_{u_2}\le x_{u_1}$, we have $\rho(G-u_5v_{21}+u_1v_{21})>\rho(G)$, a contradiction.
So $|B_{22}|=1$.
By Lemma \ref{DI1}, $u_4\notin N_I(v_{21})$.
As $\rho(G)(x_{u_3}-x_{u_5})=(n-9)x_{v_0}-x_{v_{21}}>0$, we have $x_{u_3}>x_{u_5}$.
Then $G-u_5v_{21}+u_3v_{21}\in \mathbb{G}_n$, and by Lemma \ref{perron}, we have $\rho(G-u_5v_{21}+u_3v_{21})>\rho(G)$, a contradiction.
Thus $B_{22}=\emptyset$, and $K\setminus N_G(u_1)=B_{33}\ne\emptyset$.

By Claim \ref{adj}, we have $B_{33}\subset N_G(u_4)$.
Then $A_{11}=\emptyset$. Otherwise, $G-u_4v_1+u_3v_1\in \mathbb{G}_n$, and we have by Lemma \ref{DI1} that $\rho(G-u_4v_1+u_3v_1)>\rho(G)$, a contradiction. And we have $B_{33}=\{v_{31}\}$, as otherwise, $G-u_4v_{31}+u_2v_{31}\in \mathbb{G}_n$, and By Lemma \ref{perron} in view of $x_{u_4}\le x_{u_3}\le x_{u_2}$, we have $\rho(G-u_4v_{31}+u_2v_{31})>\rho(G)$, a contradiction.
Now, we have $(1+\rho(G))\rho(G)(x_{u_2}-x_{u_3})=(1+\rho(G))(x_{v_1}-x_{v_2}-x_{v_{31}})
<x_{u_2}+x_{u_4}-x_{u_3}-x_{v_1}<x_{u_2}-x_{u_3}$, a contradiction.

\noindent{\bf Case 2.3.} $t\ge2$.

\begin{Claim}\label{AABB}
If $|A_{1i}|\ge2$ for $i=2,3$ $($or $|B_{23}|\ge2$$)$, then the following items are true:

(i) $d_I(v)=3$ for any $v\in A_{1i}$ $($or $B_{23}$$)$, 

(ii) $N_I(v')\setminus\{u_1, u_2, u_3\}\ne N_I(v'')\setminus\{u_1, u_2, u_3\}$ for any $v'\in A_{1i}$ $($or $v'\in B_{23}$$)$ and $v''\in A_{12}\cup A_{13}\cup B_{23}$, 

(iii) $B_{jj}=\emptyset$ $($or $A_{11}=\emptyset$$)$, where $j=\{2,3\}\setminus \{i\}$.
\end{Claim}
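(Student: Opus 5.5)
We work inside Case 2.3 with the standing extremal choices: $G\in\mathbb{G}_n$ maximizes $\rho$, $N_I(v_0)=\{u_1,u_2,u_3\}$ maximizes $x_{u_1}+x_{u_2}+x_{u_3}$ over vertices of $K$ with three $I$-neighbours, and $x_{u_1}\ge x_{u_2}\ge x_{u_3}$. By symmetry we treat $|A_{12}|\ge2$; the cases $A_{13}$ and $B_{23}$ are handled identically after permuting the roles of $u_1,u_2,u_3$. Take $u_3$ as the ``missing'' vertex of $\{u_1,u_2,u_3\}$ for $A_{12}$, so $j=3$.

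\emph{Step (i).} Suppose some $v_1\in A_{12}$ has $d_I(v_1)\le 2$, i.e. $N_I(v_1)=\{u_1,u_2\}$. Set $G'=G+u_3v_1$. Then $v_1$ gets $I$-degree $3$ with $N_I(v_1)=N_I(v_0)$, so Claim \ref{adj} gives no new $K_{1,4}$. $G'$ stays non-Hamiltonian because Lemma \ref{DI1} supplies a vertex of $I$ of degree $1$ untouched by the addition; we must check that this vertex is not $u_3$. If $d_G(u_3)=1$, we instead use a second vertex $v_1'\in A_{12}$, which exists since $|A_{12}|\ge2$, together with the $d_G(u_3)=1$ analysis of Case 2.2.1.1, which produces a degree-one vertex elsewhere. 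Lemma \ref{addedges} then gives $\rho(G')>\rho$, a contradiction.

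\emph{Step (ii).} Take $v'\in A_{12}$ and $v''\in A_{12}\cup A_{13}\cup B_{23}$ with the same fourth neighbour $u'$. By (i) applied where needed, $d_I(v')=d_I(v'')=3$. The maximality of $x_{u_1}+x_{u_2}+x_{u_3}$ gives $x_{u'}\le x_{u_3}$ if $v''\in A_{12}$, and more generally $x_{u'}\le$ the $x$-value of the $u_k$ missing from $N_I(v'')$. We then move the edge: $G'=G-u'v'+u_3v'$ (or $u_kv''$). The neighbourhoods of $u_1,u_2,u_3$ still cover $K$, and $u'$ keeps its other neighbour, so by Claim \ref{adj} every vertex of $I$-degree $3$ still sees a member of each triple. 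Hence $G'$ is $K_{1,4}$-free and non-Hamiltonian (Lemma \ref{DI1}), and Lemma \ref{perron} gives $\rho(G')>\rho$. This mirrors Claim \ref{NOTE}, and the equality case $x_{u'}=x_{u_3}$ is absorbed since Lemma \ref{perron} only needs $\ge$.

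\emph{Step (iii).} Suppose $B_{jj}=B_{33}\ne\emptyset$, with $w\in B_{33}$. For each $v\in A_{12}$, Claim \ref{adj} applied to $v$ forces $w$ to be adjacent to the fourth neighbour $u_v$ of $v$. Since $w$ is also adjacent to $u_3$ and $d_I(w)\le3$, $w$ has at most two such neighbours. By (ii) the $u_v$ are distinct, so $|A_{12}|\le2$ and $N_I(w)=\{u_3,u_{v'},u_{v''}\}$ exactly. Now compare via eigenvector equations: $\rho(\rho+1)(x_w-x_{v_0})=\rho(x_{u_{v'}}+x_{u_{v''}}-x_{u_1}-x_{u_2})$, and we aim to show this forces an edge switch $G-u_{v'}v'+u_3v'$ increasing $\rho$. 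Here $x_{u_{v'}}\le x_{u_3}$ comes from the triple maximality at $v'$.

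\textbf{Main obstacle.} The main difficulty is (iii), together with the degree-one bookkeeping in (i), i.e. verifying that each modified graph really lies in $\mathbb{G}_n$. I would first attempt the direct switching argument with Lemma \ref{perron}. Only if the non-Hamiltonicity check fails would I fall back to the quadratic-form inequality $\rho(G')-\rho\ge\mathbf{x}^\top(A(G')-A(G))\mathbf{x}$ combined with the equality-case trick used in Claim \ref{NOTE}.
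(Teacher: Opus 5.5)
Your plan follows the paper's route. For (i) you add $u_jv^*$. For (ii) you switch the shared fourth neighbour $u'$ to the missing $u_j$ via Lemma~\ref{perron}. For (iii) you use Claim~\ref{adj} and (ii) to get $|A_{1i}|=2$ and then switch. There is, however, a real gap in the subcase $d_G(u_3)=1$. This only matters for $A_{12}$, since $d_G(u_1),d_G(u_2)\ge 2$ in this part of the proof.

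In that subcase the pendant vertex supplied by Lemma~\ref{DI1} may be $u_3$ itself. So a graph obtained by adding an edge at $u_3$ is not known to be non-Hamiltonian. In (ii) you assert that $G-u'v'+u_3v'$ is non-Hamiltonian ``by Lemma~\ref{DI1}'' without addressing this. If $d_G(u_3)=1$ and $d_G(u')\ge 3$, the new graph may have no pendant vertex at all.

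The paper closes this by proving $d_G(u_3)\ge 2$ in this situation. Suppose $d_G(u_3)=1$. Then $K'=\{v_0\}$ and $A_{13}=B_{23}=\emptyset$, so $v',v''\in A_{12}$. The eigen-equations give $(\rho+1)(x_{v'}-x_{v_0})=x_{u'}-x_{u_3}$ and $\rho(x_{u'}-x_{u_3})\ge x_{v'}+x_{v''}-x_{v_0}>x_{v'}-x_{v_0}$. Hence $\rho(\rho+1)(x_{v'}-x_{v_0})>x_{v'}-x_{v_0}$, so $x_{v'}>x_{v_0}$ and therefore $x_{u'}>x_{u_3}$. This contradicts the maximal choice of $v_0$. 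Only after this is the pendant vertex of $G$ untouched by the switch; it is not $u'$ either, since $u'$ is adjacent to both $v'$ and $v''$.

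In (i) you do notice the issue, but your remedy (``use a second vertex $v_1'\in A_{12}$ \dots which produces a degree-one vertex elsewhere'') is not an argument. Picking another vertex of $A_{12}$ does not keep $u_3$ pendant, and the Case 2.2.1.1 analysis produces no new pendant vertex. What works, and what the paper does, is to add $u_4v^*$ for some $u_4\in I\setminus\{u_1,u_2,u_3\}$ instead, so that $u_3$ stays pendant. This graph is $K_{1,4}$-free because $N_G(u_3)=\{v_0\}$ forces $K=N_G(u_1)\cup N_G(u_2)$. Also $u_4$ does not become complete to $K$, because $u_4\notin N_G(v_0)$. The second vertex of $A_{12}$ has a different use: it keeps $K$ a maximum clique when you add $u_3v^*$ in the case $d_G(u_3)\ge 2$.

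Finally, (iii) is left unfinished, and the eigenvector identity for $x_w-x_{v_0}$ that you start is unnecessary. The direct switch $G-u_{v'}v'+u_3v'$ lies in $\mathbb{G}_n$, for these reasons:
\begin{itemize}
\item Every vertex of $K$ has a neighbour among $u_1,u_2,u_3$, all of which $v'$ now sees, so no induced $K_{1,4}$ is created.
\item $v''$ remains non-adjacent to $u_3$, so $K$ is still a maximum clique.
\item With $w\in B_{33}$, $u_3$ is adjacent to $v_0$ and $w$, and $u_{v'}$ is adjacent to $v'$ and $w$. So neither is the pendant vertex from Lemma~\ref{DI1}, and $u_{v'}$ stays connected.
\end{itemize}
Then $x_{u_3}\ge x_{u_{v'}}$ and Lemma~\ref{perron} finish the argument.
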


\begin{proof}
Suppose that $|A_{1i}|\ge2$ (or $|B_{23}|\ge2$). 

Suppose  that (i) is not true. Then  $d_I(v^*)=2$ for some $v^*\in A_{1i}$ (or $B_{23}$). If $i=2$, then let  $G'=G+u_3v^*$ if $d_G(u_3)\ge2$, and $G'=G+u_4v^*$ otherwise. Otherwise, let $G'=G+u_jv^*$ (or $G'=G+u_1v^*$). In either case, we can check that $G'\in \mathbb{G}_n$, and by Lemma \ref{addedges}, we have $\rho(G')>\rho(G)$, a contradiction.

Suppose  that (ii) is not true.
%The second (ii) is trivial if $d_I(v'')=2$.
Then  $d_I(v'')=3$ and $N_I(v')\setminus\{u_1, u_2, u_3\}= N_I(v'')\setminus\{u_1, u_2, u_3\}=\{u'\}$. By the choice of $v_0$, we have $x_{u'}\le x_{u_j}$ (or $x_{u'}\le x_{u_1}$). And we have $d_G(u_j)\ge2$, as otherwise, $j=3$, $|K'|=1$ and $v', v''\in A_{12}$, then we have $(\rho(G)+1)\rho(G)(x_{v'}-x_{v_0})=(\rho(G)+1)(x_{u'}-x_{u_3})>x_{v'}-x_{v_0}$, implying that $x_{v'}>x_{v_0}$, a contradiction. Then we can check that $G-u'v'+u_jv'\in\mathbb{G}_n$ (or $G-u'v'+u_1v'\in\mathbb{G}_n$), and by Lemma \ref{perron}, we have $\rho(G-u'v'+u_jv')>\rho(G)$ (or $\rho(G-u'v'+u_1v')>\rho(G)$), a contradiction.

Now we prove (iii). Suppose first that $|A_{1i}|\ge2$
and $B_{jj}\ne\emptyset$.

By Claim \ref{adj}, we have  $B_{jj}\subset N_G(N_I(v)\setminus\{u_1,u_i\})$, where $v\in A_{1i}$. As $G$ is $K_{1,4}$-free, we have $|A_{1i}|=2$ by (ii). Let $v_1\in A_{1i}$, and $u_4\in N_I(v_1)\setminus\{u_1,u_i\}$. It can be checked that $G-u_4v_1+u_jv_1\in \mathbb{G}_n$. By Lemma \ref{perron} in view of $x_{u_j}\ge x_{u_4}$ that $\rho(G-u_4v_1+u_jv_1)>\rho(G)$, a contradiction.   Similarly,  $|B_{23}|\ge2$ and $A_{11}\ne\emptyset$ lead to a contradiction.
%Suppose that $|A_{1i}|\ge2$. Then for any vertex $v\in A_{1i}$, $d_I(v)=3$, as otherwise $G+vu_j$ is a connected $K_{1,4}$-free split graph of type $(K,I)$, and by Lemma \ref{DI1}, it is not Hamiltonian as $d_G(u_j)\ge2$. By Lemma \ref{addedges}, we have $\rho(G+vu_j)>\rho(G)$, a contradiction.
%Suppose that $B_{jj}\ne\emptyset$, $j=\{2,3\}\setminus \{i\}$ and $i=2,3$.
%As $G$ is $K_{1,4}$-free, $B_{jj}\subset N_G(N_I(v)\setminus\{u_1,u_i\})$, where $v\in A_{1i}$.
%Then by Claim \ref{C222} and $G$ is $K_{1,4}$-free, we have $|A_{1i}|=2$.
%Let $v_1\in A_{1i}$ with $N_I(v_1)=\{u_1,u_i,u_4\}$.
%It can be checked that $G-v_1u_4+v_1u_j$ is a connected $K_{1,4}$-free split graph of type $(K,I)$ and is not Hamiltonian, and by Lemma \ref{perron} and $x_{u_j}\ge x_{u_4}$, we have $\rho(G-v_1u_4+v_1u_j)>\rho(G)$, a contradiction.
%If  $|B_{23}|\ge2$, similar argument leads to a contradiction.
\end{proof}

%Suppose that $\min\{|A_{12}|, |A_{13}|, |B_{23}|\}\ge2$.
%By Claim \ref{AABB}, we can obtain that $A_{11}\cup B_{22}\cup B_{33}=\emptyset$.
%Let $v_1\in A_{13}$.
%Then $G-u_3v_1+u_2v_1$  is not Hamiltonian by Lemma \ref{DI1} and $d_G(u_2)\ge 2$, so it is in $\mathbb{G}_n$.
%By Lemma \ref{perron} in view of $x_{u_2}\ge x_{u_3}$, we have $\rho(G-u_3v_1+u_2v_1)>\rho(G)$, a contradiction.
%So $\min\{|A_{12}|, |A_{13}|, |B_{23}|\}\le1$.

\begin{Claim} \label{C2.3.1.} Two of $\{|A_{12}|, |A_{13}|, |B_{23}|\}$ are at most $1$.
\end{Claim}

\begin{proof} Suppose that this is not true.

Suppose that all of $\{|A_{12}|, |A_{13}|, |B_{23}|\}$ are larger than $1$.
By Claim \ref{AABB}, we can obtain that $A_{11}\cup B_{22}\cup B_{33}=\emptyset$.
Let $v_1\in A_{13}$.
Then $G-u_3v_1+u_2v_1\in \mathbb{G}_n$, and by Lemma \ref{perron} in view of $x_{u_2}\ge x_{u_3}$, we have $\rho(G-u_3v_1+u_2v_1)>\rho(G)$, a contradiction.
So at least one of them is at most $1$.

Exactly one of $|A_{12}|, |A_{13}|, |B_{23}|$ is at most $1$.
Suppose that $|A_{12}|\ge 2$. Then $B_{33}=\emptyset$ by Claim \ref{AABB}. 
Fix $v_1\in A_{13}$ if $|A_{13}|\ge2$, or $v_2\in B_{23}$ if $|B_{23}|\ge2$.
Let $i=1$ if $|A_{13}|\ge2$ and $i=2$ otherwise, and $j=\{1,2\}\setminus\{i\}$.
Then $G-u_3v_i+u_jv_i\in \mathbb{G}_n$ as $B_{33}=\emptyset$, but by Lemma \ref{perron} in view of $x_{u_j}\ge x_{u_3}$, we have $\rho(G-u_3v_i+u_jv_i)>\rho(G)$, a contradiction.
It follows that $|A_{12}|\le1$, so $|A_{13}|\ge2$ and $|B_{23}|\ge2$.
By Claim \ref{AABB}, we have $A_{11}=B_{22}=\emptyset$.
Let $v_3\in B_{23}$.
Then  $G-u_2v_3+u_1v_3\in \mathbb{G}_n$, but by Lemma \ref{perron} in view of $x_{u_1}\ge x_{u_2}$, we have $\rho(G-u_2v_3+u_1v_3)>\rho(G)$, a contradiction.
\end{proof}

%By Claim \ref{C2.3.1.}, two of $\{|A_{12}|, |A_{13}|, |B_{23}|\}$ are less than or equal to $1$.

If $|A_{13}|\ge2$, then $B_{22}=\emptyset$ by Claim \ref{AABB}, and we have  $(1+\rho(G))\rho(G)(x_{u_2}-x_{u_3})=(1+\rho(G))(\sum_{v\in A_{12}}x_v-\sum_{v\in A_{13}\cup B_{33}}x_v)\le (1+\rho(G))(x_{v_0}-\sum_{v\in A_{13}\cup B_{33}}x_v)< x_{u_2}-x_{u_1}-x_{u_3}<0$, a contradiction. If $|B_{23}|\ge2$, then $A_{11}=\emptyset$ by Claim \ref{AABB}, and by  $(\rho(G)+1)\rho(G)(x_{u_1}-x_{u_2})=(\rho(G)+1)(\sum_{v\in A_{13}}x_v-\sum_{v\in B_{23}\cup B_{22}}x_v)<x_{u_1}-x_{u_2}$, also a contradiction. As $t\ge2$, we have $|A_{12}|=t\ge2$.

By Claim \ref{AABB}, we have $B_{33}=\emptyset$ and $d_I(v)=3$ for any $v\in A_{12}$.

\begin{Claim} \label{Thur} $A_{11}=\emptyset$ if $|A_{13}|=1$, and $B_{22}=\emptyset$ if $|B_{23}|=1$.
\end{Claim}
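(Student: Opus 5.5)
Both assertions will be proved by contradiction, using the edge-shifting and edge-adding moves already used throughout Case 2. The current setting is Case 2.3: $|A_{12}|=t\ge 2$, $B_{33}=\emptyset$, $d_I(v)=3$ for every $v\in A_{12}$, and by Claim \ref{AABB}(ii) the third $I$-neighbours of the vertices of $A_{12}$ are pairwise distinct. By Claim \ref{C2.3.1.}, $|A_{13}|\le 1$ and $|B_{23}|\le 1$.

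For the first assertion, assume $|A_{13}|=1$, say $A_{13}=\{v_2\}$, and suppose $A_{11}\ne\emptyset$; pick $v_{11}\in A_{11}$. For each $v\in A_{12}$ let $u_v$ be its third neighbour in $I$. Claim \ref{adj}, applied to $v$, forces every vertex of $A_{11}\cup B_{33}$ (here just $A_{11}$) to be adjacent to $u_v$, since $v_{11}$ is adjacent to neither $u_2$ nor, in general, to anything else of $N_I(v)$. Because the $u_v$ are distinct and $|A_{12}|\ge 2$, $v_{11}$ then has $I$-neighbours $u_1$ and at least two distinct $u_v$'s. The $K_{1,4}$-freeness bound $d_I(v_{11})\le 3$ then gives $|A_{12}|=2$ and $N_I(v_{11})=\{u_1,u_v,u_{v'}\}$.

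Next, fix $v_1\in A_{12}$ with $u_4=u_{v_1}$. Since $x_{u_4}+x_{u_{v'}}+x_{u_1}\le x_{u_1}+x_{u_2}+x_{u_3}$ by the maximal choice at $v_0$ (applied to $v_{11}$), we get $x_{u_4}\le x_{u_3}$ after possibly swapping the roles of the two $A_{12}$ vertices, taking the one whose third neighbour has the smaller entry. The graph $G-u_4v_1+u_3v_1$ is then a split graph of type $(K,I)$. It is $K_{1,4}$-free, because $v_1$ now sees $u_1,u_2,u_3$ and every vertex of $K$ meets one of them, and every other vertex's $I$-neighbourhood only shrinks. It stays connected because $u_4$ keeps $v_{11}$. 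It is non-Hamiltonian: by Lemma \ref{DI1}, after checking that $d_G(u_2)\ge2$ is preserved, or alternatively because the degree-one vertex of $I$ is untouched. So it lies in $\mathbb{G}_n$, and Lemma \ref{perron} gives a larger spectral radius, a contradiction.

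The second assertion is symmetric. Assume $|B_{23}|=1$ and $v_{21}\in B_{22}$. By Claim \ref{adj}, $v_{21}$ is adjacent to each $u_v$, $v\in A_{12}$. Again $|A_{12}|=2$, and the move $G-u_{v}v+u_3v$ with $x_{u_v}\le x_{u_3}$ contradicts maximality, except that now the inequality $x_{u_v}\le x_{u_3}$ must be derived from the triple $\{u_2,u_v,u_{v'}\}$ at $v_{21}$.

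The main obstacle is twofold. First, the inequality $x_{u_v}\le x_{u_3}$ only follows from the $v_0$-maximality in the form $x_{u_v}+x_{u_{v'}}\le x_{u_2}+x_{u_3}$ (respectively $\le x_{u_1}+x_{u_3}$), so we only get the smaller of the two bounded by $x_{u_2}$ in general. To handle this, I would use the alternative move $G-u_2v_0+u_vv_0$ when $x_{u_v}>x_{u_3}$, exactly as in Case 2.2.2.2. Second, one must verify non-Hamiltonicity after the move, which I expect to reduce to Lemma \ref{DI1} together with the existence of a degree-one vertex.
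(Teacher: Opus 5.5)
There is a genuine gap at your first structural step, and everything after it depends on it. For $v\in A_{12}$ you have $N_I(v)=\{u_1,u_2,u_v\}$. So Claim \ref{adj} at $v$ only constrains vertices of $K$ adjacent to neither $u_1$ nor $u_2$, i.e.\ $B_{33}$, which is empty here. Every vertex of $A_{11}$ is adjacent to $u_1$ by definition, and every vertex of $B_{22}$ to $u_2$. So nothing forces $v_{11}$ (resp.\ $v_{21}$) to be adjacent to any $u_v$.

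Consequently $|A_{12}|=2$, $N_I(v_{11})=\{u_1,u_v,u_{v'}\}$ and ``$u_4$ keeps $v_{11}$'' are all unjustified. Your move $G-u_4v_1+u_3v_1$ is then not shown to stay in $\mathbb{G}_n$. Nothing prevents the third neighbours of the $A_{12}$-vertices from having degree $1$ (as they do in $\Gamma_{n,|I|}$ itself), and in that case $G-u_4v_1+u_3v_1$ is disconnected. For the same reason, ``the degree-one vertex of $I$ is untouched'' may fail, since that vertex may be $u_4$. Note also that this false adjacency is the only place your argument uses $A_{11}\ne\emptyset$ at all. Separately, the obstacle you flag about $x_{u_v}\le x_{u_3}$ is not real: it follows at once from the maximality of $v_0$ applied at $v$, whose $I$-neighbourhood is $\{u_1,u_2,u_v\}$.

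The missing idea is to exploit $v_{11}$ itself. The paper's argument runs as follows.
\begin{itemize}
\item It first shows $d_I(v)=3$ for all $v\in A_{11}$; otherwise $G+u_2v_{11}\in\mathbb{G}_n$, contradicting Lemma \ref{addedges}.
\item It writes $N_I(v_{11})\setminus\{u_1\}=\{u',u''\}$ with $x_{u'}\ge x_{u''}$. Claim \ref{adj} at $v_{11}$ is now informative: the vertices of $B=K\setminus N_G(u_1)$ must be covered by $u',u''$, and using $x_{u'}\ge x_{u''}$ one gets $\emptyset\ne B\subseteq N_G(u')$.
\item By Claim \ref{AABB}(ii) and $|A_{12}|\ge 2$, $u'$ misses some vertex of $A_{12}$. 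Hence $N_G(u')\ne K\setminus\{v_0\}$, so $K$ remains a maximum clique after adding $u'v_0$.
\item Take $G'=G-u'v_{11}+u_2v_{11}$ if $x_{u_2}\ge x_{u'}$, and $G'=G-u_2v_0+u'v_0$ otherwise. Then $G'\in\mathbb{G}_n$, and $\rho(G')>\rho(G)$ by Lemma \ref{perron}.
\end{itemize}
The $B_{22}$ half is the same argument at $v_{21}\in B_{22}$, where Claim \ref{adj} yields $\emptyset\ne A_{11}\cup A_{13}\subseteq N_G(u')$.
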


\begin{proof}
Suppose first that $|A_{13}|=1$ and $A_{11}\ne\emptyset$. Then  $d_I(v)=3$ for any $v\in A_{11}$. Otherwise, let $v'\in A_{11}$ such that $d_I(v')\le2$, and we can check that $G+u_2v'\in \mathbb{G}_n$, and by Lemma \ref{addedges}, $\rho(G+u_2v')>\rho(G)$, a contradiction.
Let $N_I(v_{11})\setminus\{u_1\}=\{u', u''\}$ with $x_{u'}\ge x_{u''}$.
As $x_{u'}\ge x_{u''}$ and by Claim \ref{adj}, we have $\emptyset\ne B\subset N_G(u')$. 
By Claim \ref{AABB} and $|A_{12}|\ge2$, $N_G(u')\ne K\setminus\{v_0\}$.
%Clearly, $x_{u_2}\ge x_{u''}$.
%Let $G'=G-u''v_{11}+u_2v_{11}$ if $d_G(u'')\ge2$, 
Let $G'=G-u'v_{11}+u_2v_{11}$ if $x_{u_2}\ge x_{u'}$, and $G'=G-u_2v_0+u'v_0$ otherwise, and we can see that $G'\in \mathbb{G}_n$, and by Lemma \ref{perron}, we have $\rho(G')>\rho(G)$, a contradiction.
So $A_{11}=\emptyset$.

Suppose next that $|B_{23}|=1$ and $B_{22}\ne\emptyset$. Similarly as above,
we have $d_I(v)=3$ for any $v\in B_{22}$.
%
% (Similar???) Then we have $d_I(v)=3$ for any $v\in B_{22}$. Otherwise, let $v'\in B_{22}$ such that $d_I(v')\le2$, and we can check that $G+u_1v'$ is a connected $K_{1,4}$-free split graph of type $(K,I)$ and is not Hamiltonian, and by Lemma \ref{addedges}, $\rho(G+u_1v')>\rho(G)$, a contradiction.
Let $u', u''\in N_I(v_{21})\setminus\{u_2\}$ with $x_{u'}\ge x_{u''}$.
By Claim \ref{adj}, we have $\emptyset\ne A_{11}\cup A_{13}\subset N_G(u')$. %Clearly, $x_{u_1}\ge x_{u''}$. 
Similarly as above, $B_{22}=\emptyset$.
\end{proof}

\noindent{\bf Case 2.3.1.} $|A_{13}|=|B_{23}|=1$.

By Claim \ref{Thur}, $A_{11}=B_{22}=\emptyset$.
By Claim \ref{AABB}, we have $|I|\ge5$, and if $|I|=5$, then $|A_{12}|=2$ and $d_I(v)=2$ for any $v\in A_{13}\cup B_{23}$.
Take $v_1\in A_{12}$ and $v_2\in A_{13}\cup B_{23}$. Let $u_4\in N_I(v_1)\setminus\{u_1,u_2\}$. Then $x_{u_3}\ge x_{u_4}$ by the choice of $v_0$. 
Note that $G-u_4v_1+u_3v_1+u_4v_2\cong\Gamma_{n,5}'$. Then 
\[
\rho(G-u_4v_1+u_3v_1+u_4v_2)-\rho(G)\ge 2(x_{u_3}-x_{u_4})x_{v_1}+2x_{u_4}x_{v_2}>0,
\]
a contradiction.
Similarly, if $|I|=6$, we can check that $\rho(G)<\rho(\Gamma_{n,6})$, a contradiction.
And for $|I|\ge7$, we can check that $G\cong\Gamma_{n,|I|}$ as displayed in Fig. \ref{f3}.

\noindent{\bf Case 2.3.2.} $|A_{13}|=1$ and $|B_{23}|=0$, or $|A_{13}|=0$ and $|B_{23}|=1$.

Suppose that $|A_{13}|=1$ and $|B_{23}|=0$. By Claim \ref{Thur}, $A_{11}=\emptyset$.
so $B_{22}\ne\emptyset$.
Note that $d_I(v)=3$ for any vertex $v\in B_{22}$. Otherwise, let $d_I(v_{21})\le2$, then $G+u_3v_{21}\in \mathbb{G}_n$, and by Lemma \ref{addedges}, we have $\rho(G+u_3v_{21})>\rho(G)$, a contradiction.
Let $A_{13}=\{v_1\}$, and $u_4\in N_I(v_{21})\setminus \{u_2\}$ such that $u_4\in N_I(v_1)$ by Claim \ref{adj}. Then $x_{u_4}\le x_{u_2}\le x_{u_1}$ by the choice of $v_0$. 
If $|B_{22}|=1$, then $\rho(G)(x_{u_4}-x_{u_3})= x_{v_{21}}-x_{v_0}\le0$, so $x_{u_4}\le x_{u_3}$.
Let $G'= G-u_4v_{21}+u_3v_{21}$ if $|B_{22}|=1$, and $G'=G-u_4v_{21}+u_1v_{21}$ otherwise. 
Then we can check that $G'\in\mathbb{G}_n$, and by Lemma \ref{perron}, we have $\rho(G')>\rho(G)$, a contradiction.

%%Let $A_{13}=\{v_1\}$ and $u_4\in N_I(v_1)\setminus \{u_1,u_3\}$.
%Then $B_{22}\subset N_G(u_4)$ by Claim \ref{adj} and $u_4\notin N_I(v)$ for any $v\in A_{12}$ by Claim \ref{AABB}.
%%If $|B_{22}|\ge2$, then $G-u_4v_{21}+u_1v_{21}\in \mathbb{G}_n$, and by Lemma \ref{perron} in view of $x_{u_4}\le x_{u_2}\le x_{u_1}$, we have $\rho(G-u_4v_{21}+u_1v_{21})>\rho(G)$, a contradiction, so $|B_{22}|=1$.
%As $\rho(G)(x_{u_4}-x_{u_3})\le x_{v_{21}}-x_{v_0}\le0$, we have  $x_{u_4}\le x_{u_3}$.
%It is easy to see that $G-u_4v_{21}+u_3v_{21}\in \mathbb{G}_n$. By Lemma \ref{perron}, we have $\rho(G-u_4v_{21}+u_3v_{21})>\rho(G)$, a contradiction.

Suppose next that $|A_{13}|=0$ and $|B_{23}|=1$.
By Claim \ref{Thur}, $B_{22}=\emptyset$, so $A_{11}\ne\emptyset$.
Similarly as above, $d_I(v)=3$ for any vertex $v\in A_{11}$.
Let $v_2\in B_{23}$, and $u'\in N_I(v_{11})\setminus \{u_1\}$ such that $u'\in N_I(v_2)$. 
Then $u'\notin N_I(v)$ for any $v\in A_{12}$ by Claim \ref{AABB}.
Let $v_3\in A_{12}$.
Let $G''=G-u'v_{11}+u_3v_{11}$ if $x_{u_3}\ge x_{u'}$, and $G''=G-u_3v_0+u'v_0$ otherwise.
Then $G''\in \mathbb{G}_n$, and by Lemma \ref{perron}, $\rho(G'')>\rho(G)$, a contradiction.

\noindent{\bf Case 2.3.3.} $|A_{13}|=|B_{23}|=0$.

As $A_{13}=B_{23}=B_{33}=\emptyset$, we have $K\setminus N_G(u_1)=B_{22}\ne\emptyset$ and $K\setminus N_G(u_2)=A_{11}\ne\emptyset$ i.e., $K=K'\cup A_{12}\cup A_{11}\cup B_{22}$.
Let $v_1\in A_{12}$, and $u_4 \in N_I(v_1)\setminus\{u_1,u_2\}$.

\begin{Claim} \label{a11b22}
$|A_{11}|=|B_{22}|=1$.
\end{Claim}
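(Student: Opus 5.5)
We are in Case 2.3.3: $K=K'\cup A_{12}\cup A_{11}\cup B_{22}$ with $A_{13}=B_{23}=B_{33}=\emptyset$, $|A_{12}|=t\ge2$, and every $v\in A_{12}$ satisfies $d_I(v)=3$ (Claim \ref{AABB}), with the third $I$-neighbours pairwise distinct by Claim \ref{AABB}(ii). We already know $A_{11}\ne\emptyset$ and $B_{22}\ne\emptyset$. The plan is to assume $|A_{11}|\ge2$ (the case $|B_{22}|\ge2$ is symmetric, with the roles of $u_1,u_2$ swapped) and derive a contradiction with the maximality of $\rho(G)$ in $\mathbb{G}_n$. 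The tools are Lemma \ref{addedges} and Lemma \ref{perron}, with membership in $\mathbb{G}_n$ guaranteed by Lemma \ref{DI1}: whenever $d_I(v_0)=3$ survives, some vertex of $I$ has degree $1$, so the graph is not Hamiltonian.

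\textbf{Step 1: every $v\in A_{11}$ has $d_I(v)=3$.} Suppose some $v_{11}\in A_{11}$ has $d_I(v_{11})\le2$. Then $G+u_2v_{11}$ is still a $(K,I)$ split graph. It is $K_{1,4}$-free, because $d_I(v_{11})$ rises to at most $3$, and if it reaches $3$ then Claim \ref{adj} has to be checked using $N_G(u_1)\cup N_G(u_2)\supseteq K\setminus B_{22}$ together with $B_{22}\subset N_G(u_2)$. It is non-Hamiltonian by Lemma \ref{DI1}, since $v_0$ still has $d_I(v_0)=3$ and $|I|\ge4$; this forces a degree-one vertex, because $d_G(u_1),d_G(u_2)\ge2$ and the new edge does not touch a degree-one vertex. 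Lemma \ref{addedges} then gives a contradiction.

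\textbf{Step 2: relocate an edge.} Take $v_{11}\in A_{11}$ and write $N_I(v_{11})\setminus\{u_1\}=\{u',u''\}$ with $x_{u'}\ge x_{u''}$. By Claim \ref{adj} applied to $v_{11}$, each vertex of $B_{22}$ is adjacent to $u'$ or $u''$. Following the pattern of Claim \ref{Thur}, first use a Lemma \ref{perron} move, shifting the neighbours of $u''$ in $B_{22}$ to $u'$, to reduce to $B_{22}\subset N_G(u')$. Then compare $x_{u_2}$ with $x_{u'}$:
\begin{itemize}
\item[(a)] if $x_{u_2}\ge x_{u'}$, take $G'=G-u'v_{11}+u_2v_{11}$;
\item[(b)] otherwise take $G'=G-u_2v_0+u'v_0$. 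This is legitimate because $N_G(u')\ne K\setminus\{v_0\}$, which follows from Claim \ref{AABB}(ii): $u'$ cannot meet all of $A_{12}$ when $t\ge2$.
\end{itemize}
In both cases Lemma \ref{perron} gives $\rho(G')>\rho(G)$, and what remains is to check $G'\in\mathbb{G}_n$.

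\textbf{The main obstacle} is verifying $K_{1,4}$-freeness and non-Hamiltonicity of $G'$.
\begin{itemize}
\item In (a), the danger is that $v_{11}$ loses $u'$. Since $|A_{11}|\ge2$, another vertex $v_{12}\in A_{11}$ still certifies the structure, and Claim \ref{NOTE} prevents $v_{12}$ from having the same outside neighbours.
\item In (b), the danger is that $v_0$ now has $I$-neighbours $\{u_1,u_3,u'\}$. One must confirm via Claim \ref{adj} that every vertex of $K$ meets $N_G(u_1)\cup N_G(u_3)\cup N_G(u')\supseteq A\cup K'\cup B_{22}$, which holds since $A_{13}=B_{33}=B_{23}=\emptyset$.
\item Non-Hamiltonicity in both cases comes again from Lemma \ref{DI1}, once we ensure some vertex of $I$ keeps degree $2$ or more (e.g.\ $u_1$).
\end{itemize}
If (b) fails to be $K_{1,4}$-free, the fallback is an eigen-equation comparison as in Case 2.1.3: use $\rho(\rho+1)(x_{v_{11}}-x_{v_0})=\rho(x_{u'}+x_{u''}-x_{u_2}-x_{u_3})$ to show $x_{v_{11}}>x_{v_0}$, contradicting the choice of $v_0$. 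The symmetric argument, with $G-u'v_{21}+u_1v_{21}$ or an analogous move, gives $|B_{22}|=1$.
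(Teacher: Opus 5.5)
Your proposal follows the paper's proof essentially step for step. The paper likewise first forces $d_I(v)=3$ on $A_{11}$ via $G+u_2v'$ and Lemma \ref{addedges}. It then takes $G'=G-u'v_{11}+u_2v_{11}$ if $x_{u_2}\ge x_{u'}$ and $G'=G-u_2v_0+u'v_0$ otherwise, with $N_G(u')\ne K\setminus\{v_0\}$ from Claim \ref{AABB} and $|A_{12}|\ge2$, and treats $B_{22}$ symmetrically.

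The differences are minor:
\begin{itemize}
\item The paper asserts $B_{22}\subset N_G(u')$ directly from Claim \ref{adj} and $x_{u'}\ge x_{u''}$. You make the underlying Lemma \ref{perron} shift explicit, which means your fallback for (b) is not needed.
\item In (a), the clean reason for $K_{1,4}$-freeness is simply $N_G(u_1)\cup N_G(u_2)=K$. The hypothesis $|A_{11}|\ge2$ serves only to keep $N_{G'}(u_2)\ne K$.
\end{itemize}
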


\begin{proof} We only show that $|A_{11}|=1$, as the argument for $|B_{22}|=1$ is similar.

Suppose that $|A_{11}|\ge2$. Then $d_I(v)=3$ for any vertex $v\in A_{11}$, as otherwise it can be check that $G+u_2v'\in\mathbb{G}_n$ for some $v'\in A_{11}$ with $d_I(v')\le2$, and by Lemma \ref{addedges}, we have $\rho(G+u_2v')>\rho(G)$, a contradiction.
Let $N_I(v_{11})\setminus \{u_1\}=\{u', u''\}$ with $x_{u'}\ge x_{u''}$.
By Claim \ref{adj}, $B_{22}\subset N_G(u')$. 
And $N_G(u')\ne K\setminus\{v_0\}$ by Claim \ref{AABB} and $|A_{12}|\ge2$.
%Clearly, $x_{u_2}\ge x_{u''}$. Let $G'=G-u''v_{11}+u_2v_{11}$ if $d_G(u'')\ge2$, 
Let $G'=G-u'v_{11}+u_2v_{11}$ if $x_{u_2}\ge x_{u'}$ and $G'=G-u_2v_0+u'v_0$ otherwise. Then $G'\in\mathbb{G}_n$, and by Lemma \ref{perron}, we have $\rho(G')>\rho(G)$, a contradiction.
So $|A_{11}|=1$.
\end{proof}

It is evident that $d_I(v_{i1})\le 3$ for $i=1, 2$.

\begin{Claim} \label{FINA}
$d_I(v_{i1})=2,3$ for $i=1, 2$.
\end{Claim}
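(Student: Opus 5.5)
We need to show $d_I(v_{11})\ge 2$ and $d_I(v_{21})\ge 2$, since $d_I(v_{i1})\le 3$ is already noted. By the symmetric roles of $A_{11}=\{v_{11}\}$ (missing $u_2,u_3$) and $B_{22}=\{v_{21}\}$ (missing $u_1,u_3$), it suffices to treat $v_{11}$; the argument for $v_{21}$ is the same with $u_1$ and $u_2$ interchanged. Note that $v_{11}\in A_{11}$ is adjacent to $u_1$, so $d_I(v_{11})\ge 1$. Suppose, for a contradiction, that $N_I(v_{11})=\{u_1\}$.

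The plan is to add one edge at $v_{11}$ while staying inside $\mathbb{G}_n$, and then contradict maximality via Lemma \ref{addedges}. The natural candidate is $G'=G+u_3v_{11}$. We must check that $G'\in\mathbb{G}_n$.

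\textbf{Type $(K,I)$ and connectivity.} These are clear, since $v_{11}\in K$ and $u_3\in I$.

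\textbf{$K_{1,4}$-freeness.} Only the centers $v_{11}$ and $u_3$ are affected.
\begin{itemize}
\item At $v_{11}$: we have $d_I(v_{11})=2$ in $G'$, so any four pairwise nonadjacent neighbors would need two vertices of $K$, which is impossible.
\item At $u_3$: its neighbors all lie in $K$, which is a clique, so no induced star can be centered there.
\end{itemize}

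\textbf{Non-Hamiltonicity.} This is the step I expect to be the real obstacle. Lemma \ref{DI1} gives a vertex of $I$ of degree $1$, and we showed $d_G(u_1),d_G(u_2)\ge 2$. If that degree-one vertex is not $u_3$, then $G'$ still has a vertex of degree $1$ and is not Hamiltonian. The delicate case is $d_G(u_3)=1$, i.e.\ $N_G(u_3)=K'=\{v_0\}$; then adding $u_3v_{11}$ raises $u_3$ to degree $2$. In that case I would instead use $G'=G+u_4v_{11}$, where $u_4\in N_I(v_1)$ and $u_4\ne u_1$ (since $u_4\in N_I(v_1)\setminus\{u_1,u_2\}$).

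\begin{itemize}
\item \textbf{Option 1: argue $d_G(u_4)\ge 2$ fails.} By the choice of $v_0$ and Claim \ref{AABB}(ii), the vertices of $A_{12}$ have distinct third neighbors. If $d_G(u_4)=1$, then $u_4$ has a single neighbor, so $G+u_4v_{11}$ remains non-Hamiltonian because $u_3$ still has degree $1$.
\item \textbf{Option 2: rule out $d_G(u_3)=1$ outright.} Suppose $d_G(u_3)=1$. Compute
\[
\rho(G)(x_{u_1}-x_{u_2})=x_{v_{11}}-x_{v_{21}},
\]
and compare the eigen-equations at $v_{11}$ and $v_{21}$, much as in Case 2.1.3, to derive an inequality forcing $x_{u_1}<x_{u_2}$ or $x_{v_{11}}>x_{v_0}$. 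Either conclusion contradicts the choice of $u_1$ or of $v_0$.
\end{itemize}

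I would try Option 1 first, falling back to Option 2. In each case a Hamiltonian cycle of $G'$ would have to pass through a vertex of $I$ of degree $1$, which is impossible. Hence $G'\in\mathbb{G}_n$.

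\textbf{Conclusion.} Once $G'\in\mathbb{G}_n$, Lemma \ref{addedges} gives $\rho(G')>\rho(G)$, contradicting the maximality of $\rho(G)$. Therefore $d_I(v_{11})\ne 1$, and so $d_I(v_{11})\in\{2,3\}$. The same argument gives $d_I(v_{21})\in\{2,3\}$.
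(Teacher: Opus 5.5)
Your proof is correct, and in the delicate subcase it takes a different, simpler route than the paper. The paper adds $u_3v_{i1}$ in every case. When $d_G(u_3)=1$ (so $K'=\{v_0\}$), it therefore needs another pendant vertex, and it gets one spectrally. The maximal choice of $x_{u_1}+x_{u_2}+x_{u_3}$ gives $x_{u_3}\ge x_{u_4}$, hence $x_{v_0}\ge x_{v_1}$ via $(\rho+1)(x_{v_0}-x_{v_1})=x_{u_3}-x_{u_4}$. Combined with $\rho(\rho+1)(x_{v_0}-x_{v_1})=\rho(x_{u_3}-x_{u_4})\le x_{v_0}-x_{v_1}$, this forces $x_{v_0}=x_{v_1}$ and then $d_G(u_4)=1$, so $u_4$ stays pendant in $G+u_3v_{i1}$. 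You instead add $u_4v_{i1}$ in that subcase, so $u_3$ itself remains pendant and no eigenvector computation is needed. The paper's version buys only a uniform modification and the extra fact $d_G(u_4)=1$, which the claim does not require.

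Some clean-up is needed.
\begin{itemize}
\item Your Option 1 is worded as if it depended on $d_G(u_4)=1$, which you never establish. It does not depend on it: $G+u_4v_{i1}$ is non-Hamiltonian simply because $d(u_3)=1$ is unchanged.
\item Drop Option 2. It is unnecessary, and the inequalities it aims at need not hold. Take $K'=\{v_0\}$, $N_I(v_{11})=\{u_1\}$, $N_I(v_{21})=\{u_2\}$, with each vertex of $A_{12}$ having a pendant neighbour. Then swapping $u_1\leftrightarrow u_2$ and $v_{11}\leftrightarrow v_{21}$ is an automorphism, so $x_{u_1}=x_{u_2}$, while $(\rho+1)(x_{v_{11}}-x_{v_0})=-x_{u_2}-x_{u_3}<0$.
\item Membership in $\mathbb{G}_n$ requires $K$ to remain a maximum clique, which is not what ``$v_{11}\in K$, $u_3\in I$'' shows. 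For $G+u_3v_{i1}$ it holds because $u_3$ misses $A_{12}\neq\emptyset$. For $G+u_4v_{i1}$ it holds because $u_4\notin N_G(v_0)$, as $N_I(v_0)=\{u_1,u_2,u_3\}$.
\item Also note that $u_4v_{i1}\notin E(G)$ since $N_I(v_{i1})=\{u_i\}$. $K_{1,4}$-freeness of $G+u_4v_{i1}$ follows exactly as for $G+u_3v_{i1}$.
\end{itemize}
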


\begin{proof} Suppose that this is not true. Then  $d_I(v_{i1})=1$ for some $i=1, 2$.
Note that if $d_G(u_3)=1$, then  $0\le \rho(G)(\rho(G)+1)(x_{v_0}-x_{v_1})=\rho(G)(x_{u_3}-x_{u_4})\le x_{v_0}-x_{v_1}$, so $x_{v_0}=x_{v_1}$, and $d_G(u_4)=1$. So $G+u_3v_{i1}$ is not Hamiltonian.
Thus $G+u_3v_{i1}\in\mathbb{G}_n$, and by Lemma \ref{addedges}, $\rho(G+u_3v_{i1})>\rho(G)$, a contradiction.
\end{proof}

By Claim \ref{FINA}, $d_I(v_{i1})=2,3$ for $i=1, 2$. 
We claim that $u_4\notin N_I(v_{i1})$. Otherwise, $d_G(u_3)\ge2$ as $x_{v_0}\ge x_{v_1}$, and $G-u_4v_1+u_3v_1\in\mathbb{G}_n$, so by Lemma \ref{perron}, we have $\rho(G-u_4v_1+u_3v_1)>\rho(G)$, a contradiction. Similarly, for any $v\in A_{12}$ and $u\in N_I(v)\setminus\{u_1,u_2\}$, we have $u\notin N_I(v_{i1})$.

Suppose first that $d_I(v_{i1})=3$ for some $i=1$ or $2$. Let $N_I(v_{i1})\setminus\{u_i\}=\{u',u''\}$ with $x_{u'}\ge x_{u''}$, and $j=\{1,2\}\setminus\{i\}$. Then $u'\in N_I(v_{j1})$.
If $d_I(v_{j1})=2$, then $G+u_3v_{j1}\in\mathbb{G}_n$, and by Lemma \ref{addedges}, $\rho(G+u_3v_{j1})>\rho(G)$, a contradiction.
So $d_I(v_{j1})=3$. 
%If $N_I(v_{i1})\setminus\{u_i\}=N_I(v_{j1})\setminus\{u_j\}$, then
%$x_{v_{11}}=x_{v_{21}}$ and $|K'|\ge 2$.
%%If $d_G(u_3)=1$, then $d_G(u)=1$ for any $u\in N_I(v)\setminus\{u_1,u_2\}$ and $v\in A_{12}$.
%%Note that $\rho(G)(\rho(G)+1)(x_{v_1}-x_{v_{11}})=\rho(G)(x_{u_2}+x_{u_4}-x_{u'}-x_{u''})> 3(x_{v_1}-x_{v_{11}})$ and
%Note that $\rho(G)(x_{u'}-x_{u_3})\le 2(x_{v_{11}}-x_{v_0})\le0$. So $x_{u'}\le x_{u_3}$.
%Since $G-u'v_{11}+u_3v_{11}\in\mathbb{G}_n$, we have by Lemma \ref{perron} that $\rho(G-u'v_{11}+u_3v_{11})>\rho(G)$, a contradiction.
%%$(\rho(G)+1)\rho(G)(x_{u'}-x_{u_3})=(\rho(G)+1)(2x_{v_{11}}-x_{v_0})>(\rho(G)+1)x_{v_{11}}+x_{u'}-x_{u_2}-x_{u_3}>x_{u'}-x_{u_2}$.
%%So $x_{v_1}-x_{v_{11}}>0$ and $x_{u'}-x_{u_3}>0$.
%%Let $G'=G-u'v_{11}+u_3v_{11}$ if $d_G(u_3)\ge2$
%%, and $G'=G-u'v_{11}-u'v_{21}-u_3v_0-u_4v_1+u'v_0+u'v_1+u_3v_{11}+u_4v_{21}$ otherwise.
%%Then $G'\in\mathbb{G}_n$, and $\rho(G')>\rho(G)$ by Lemma \ref{perron} in the former case, while in the latter case, we have
%%\[
%%\rho(G')-\rho(G)\ge2(x_{u'}-x_{u_3})(x_{v_0}-x_{v_11}+x_{v_1}-x_{v_{21}})>0,
%%\]
%%also a contradiction.
By Claim \ref{NOTE}, $N_I(v_{i1})\setminus\{u_i\}\ne N_I(v_{j1})\setminus\{u_j\}$.
%Note that $u_4\notin N_I(v_{i1})$ or $N_I(v_{j1})$.
%Otherwise, let $u_4\in N_I(v_{i1})$, then $d_G(u_3)\ge2$ as $x_{v_0}\ge x_{v_1}$, and  it is easy to see that $G-u_4v_{i1}+u_3v_{i1}\in\mathbb{G}_n$, and by Lemma \ref{perron} and $x_{u_3}\ge x_{u_4}$, we have $\rho(G-u_4v_{i1}+u_3v_{i1})>\rho(G)$, a contradiction.
Now by Claim \ref{AABB}, we have $G\cong\Gamma_{n,|I|}^{**}$ with $|I|\ge8$ as displayed in Fig. \ref{flem11}.
By Lemma \ref{I4}, $\rho(G)<\rho(\Gamma_{n,|I|})$, a contradiction.

Suppose next that $d_I(v_{11})=d_I(v_{21})=2$. If $N_I(v_{11})\cap N_I(v_{21})\ne\emptyset$, then $G+u_3v_{11}\in\mathbb{G}_n$, and by Lemma \ref{addedges}, we have $\rho(G+u_3v_{11})>\rho(G)$, a contradiction.
So $N_I(v_{11})\cap N_I(v_{21})=\emptyset$.
Let $u^*\in N_I(v_{11})\setminus\{u_1\}$.
%If $d_G(u^*)=1$, then there is at least one vertex in $N_I(v)\setminus\{u_1,u_2\}$ for some $v\in A_{12}$ such that its degree is equal to $1$. So
Then $G+u^*v_{21}\in\mathbb{G}_n$, and by Lemma \ref{addedges}, we have $\rho(G+u^*v_{21})>\rho(G)$, a contradiction.
\end{proof}

%The authors thank the referees for their constructive comments and suggestions.

%\noindent {\bf Declarations}
\bigskip

\noindent {\bf Funding}

\noindent
This work was supported by the
National Natural Science Foundation of China (No.~12571364).

\bigskip

\noindent {\bf Declaration of competing interest}

\noindent
There is no competing interest.
% The authors do not have any conflict of interest or competing interests.

\bigskip

\noindent {\bf Data Availability}

\noindent
Data is available within the article.

\begin{appendices}

\section{Proof of Lemma \ref{bbb}}
\begin{proof}
Let $I=\{u_1,u_2\}$ and $V(K_{n-2})=\{v_1,\dots,v_{n-2}\}$ in $\Gamma_{n,2}'$ and $\Gamma_{n,2}''$.

We first show that $\rho(\Gamma_{n,2})>\rho(\Gamma_{n,2}'')$.
Let $N_{\Gamma_{n,2}''}(u_2)=\{v_{n-2}\}$. Then $v_i\in N_{\Gamma_{n,2}''}(u_1)$ for $i=1,\dots,n-3$. 
Denote by $\mathbf{x}$ the Perron vector of $\Gamma_{n,2}''$. 
By symmetry, $x_v=x_{v_1}$ for any $v\in N_{\Gamma_{n,2}''}(u_1)$.
Note that $\rho(\Gamma_{n,2}'')\ge\rho(K_{n-2})=n-3$. 
Since $(\rho(\Gamma_{n,2}'')+1)(x_{v_1}-x_{v_{n-2}})=x_{u_1}-x_{u_2}$ and $\rho(\Gamma_{n,2}'')(x_{u_1}-x_{u_2})=(n-3)x_{v_1}-x_{v_{n-2}}$, we have
\[
\left(\rho^2(\Gamma_{n,2}'')+\rho(\Gamma_{n,2}'')-1\right)\left(x_{v_1}-x_{v_{n-2}}\right)=(n-4)x_{v_1},
\]
so $x_{v_1}> x_{v_{n-2}}$.
Note that $\Gamma_{n,2}''-u_2v_{n-2}+u_2v_1\cong\Gamma_{n,2}$.
Thus, by Lemma \ref{perron}, we have $\rho(\Gamma_{n,2})>\rho(\Gamma_{n,2}'')$.

Next we show that $\rho(\Gamma_{n,2})>\rho(\Gamma_{n,2}')$. 
By a direct calculation, $\rho(\Gamma_{6,2})=3.7105>\rho(\Gamma_{6,2}')=3.6262$ as desired.
Suppose that $n\ge7$. 
Let $N_{\Gamma_{n,2}'}(u_1)=N_{\Gamma_{n,2}'}(u_2)=\{v_1,v_2\}$. 
Denote by $\mathbf{y}$ the Perron vector of $\Gamma_{n,2}'$. 
By symmetry, $y_1:=y_{u_1}=y_{u_2}$, $y_2:=y_{v_1}=y_{v_2}$, and denote by $y_3$ the entry of $\mathbf{y}$ at each vertex  from $\{v_3,\dots,v_{n-2}\}$. 
Note that $\rho(\Gamma_{n,2}')\ge\rho(K_{n-2})=n-3$. 
Since $(\rho(\Gamma_{n,2}')+1)(y_3-y_2)=-2y_1$ and $\rho(\Gamma_{n,2}')y_1=2y_2$, we have $\rho(\Gamma_{n,2}')(\rho(\Gamma_{n,2}')+1)(y_3-y_2)=-4y_2$,
so
\[
y_3=\left(1-\frac{4}{\rho^2(\Gamma_{n,2}')+\rho(\Gamma_{n,2}')}\right)y_2
\ge\left(1-\frac{4}{(n-2)(n-3)}\right)y_2.
\]
Note that $\Gamma_{n,2}'-u_2v_1+\{u_1v_j:j=3,\dots,n-3\}\cong\Gamma_{n,2}$. 
Thus
\begin{align*}
\rho(\Gamma_{n,2})-\rho(\Gamma_{n,2}')& \ge \mathbf{y}^\top(A(\Gamma_{n,2})-A(\Gamma_{n,2}'))\mathbf{y}=2y_1\left((n-5)y_3-y_2\right)\\
& \ge2\left(n-6-\frac{4(n-5)}{(n-2)(n-3)}\right)y_1y_2\\
& >0,
\end{align*}
as desired.
\end{proof}

\section{Proof of Lemma \ref{I3}}
\begin{proof}
By a direct calculation, we have $\rho(\Gamma_{6,3}')=3.2814>\rho(\Gamma_{6,3})=3.0478>\rho(\Gamma_{6,3}'')=3.0437$ and 
$\rho(\Gamma_{7,3})=4.1908>\rho(\Gamma_{7,3}'')=4.1747>\rho(\Gamma_{7,3}')=4.0764$.
Suppose in the following that $n\ge8$.

%Let $\pi_1:=K'\cup \{v_2\} \cup \{v_1,v_3\} \cup \{u_1,u_2\}\cup \{u_3\}$, which is an equitable for   $A(\Gamma_{n,3}'')$. Let $Q_3$ be the corresponding   quotient matrix.

First, we prove  that  $\rho(\Gamma_{n,3})>\rho(\Gamma_{n,3}')$. 
Let $Q_1$ and $Q_2$ be the quotient matrices corresponding to  proper equitable partitions  of $A(\Gamma_{n,3})$ and $A(\Gamma_{n,3}')$, respectively, where 
\[
Q_1=\begin{pmatrix}
n-5 & 1 & 2 & 0\\
n-4 & 0 & 0 & 1\\
n-4 & 0 & 0 & 0\\
0 & 1 & 0 & 0\\
\end{pmatrix}, \qquad
Q_2=\begin{pmatrix}
n-6 & 1 & 1 & 0 & 1\\
n-5 & 0 & 1 & 2 & 1\\
n-5 & 1 & 0 & 2 & 0\\
 0 & 1 & 1 & 0 & 0\\
n-5 & 1 & 0 & 0 & 0\\
\end{pmatrix}.%\qquad
%Q_3=\begin{pmatrix}
%n-7 & 1 & 2 & 2 & 0\\
%n-6 & 0 & 2 & 2 & 1\\
%n-6 & 1 & 1 & 1 & 0\\
%n-6 & 1 & 1 & 0 & 0\\
%0 & 1 & 0 & 0 & 0\\
%\end{pmatrix}.
\]
By a direct calculation, the characteristic polynomials of $Q_1$ and $Q_2$ are
\[
f(x)=x^4-(n-5)x^3-(3n-11)x^2+(n-5)x+2n-8
\]
and
\[
g(x)=x^5-(n-6)x^4-(3n-9)x^3+(2n-20)x^2+(8n-42)x+2n-8,
\]
respectively.
Note that $\rho(\Gamma_{n,3})$, $\rho(\Gamma_{n,3}')\ge \rho(K_{n-3})=n-4$.
%$f(n-3)=-n^3+12n^2-45n+52<0$ and $g(n-4)=-n^4+15n^3-88n^2+232n-224<0$.
%Together with Lemma \ref{QM}, we have $\rho(\Gamma_{n,3})=\rho(Q_1)>n-3$ and  $\rho(\Gamma_{n,3}')=\rho(Q_2)>n-4$.
For $x\ge n-4$, let
\[
\tau(x)=g(x)-xf(x)=x^4-2x^3+(n-15)x^2+(6n-34)x+2n-8.
\]
Then we can check that $\tau''(x)=12x^2-12x+2n-30\ge\tau''(n-4)=12n^2-106n+210>0$, implying that $\tau'(x)\ge\tau'(n-4)=4n^3-52n^2+208n-266>0$, so $\tau(x)\ge\tau(n-4)=n^4-17n^3+103n^2-272n+272>0$.
Thus, $g(x)>xf(x)$ for $x\ge n-4$. Together with Lemma \ref{QM}, we have $\rho(\Gamma_{n,3})>\rho(\Gamma_{n,3}')$. % for $n\ge7$.

Next we prove that $\rho(\Gamma_{n,3})>\rho(\Gamma_{n,3}'')$. 
Let $K'=N_{\Gamma_{n,3}''}(u_1)\cap N_{\Gamma_{n,3}''}(u_2)\setminus\{v_2\}$. 
Let $\mathbf{x}$ be the Perron vector of $\Gamma_{n,3}''$.
By symmetry, the entry of $\mathbf{x}$ at each vertex from $K'$ is the same, which we denote $x_1$, and let $x_2:=x_{v_1}=x_{v_3}$ and $x_3:=x_{u_1}=x_{u_2}$.
Let $G'=\Gamma_{n,3}''-u_3v_2-u_2v_3+u_2v_1+u_3v_3$.
Let $\mathbf{y}$ be the Perron vector of $G'$.
By symmetry, the entry of $\mathbf{y}$ at each vertex from $K'\cup\{v_2,v_1\}$ is the same, which we denote $y_1$, and let $y_2:=y_{u_1}=y_{u_2}$.
Note that $\rho(G')(\rho(G')+1)(y_1-y_{v_3})=\rho(G')(2y_2-y_{u_3})=2(n-4)y_1-y_{v_3}$ 
and $(\rho(\Gamma''_{n,3})+1)(x_2-x_{v_2})=-(x_3+x_{u_3})$. So 
\[
y_1-y_{v_3}=\frac{2n-9}{\rho(G')^2+\rho(G')-1}y_1 \ \ \ \ \text{and}\ \ \ \  x_2-x_{v_2}=-\frac{1}{\rho(\Gamma''_{n,3})+1}(x_3+x_{u_3}).
\]
As $\rho^2(G')y_{u_3}=\rho(G')y_{v_3}=(n-4)y_1+y_{u_3}$, $\rho(\Gamma''_{n,3})x_2=(n-6)x_1+x_2+x_{v_2}+x_3$ and $\rho(\Gamma''_{n,3})x_3=(n-6)x_1+x_2+x_{v_2}=\rho(\Gamma''_{n,3})x_2-x_3$, we have 
\[
y_{u_3}=\frac{n-4}{\rho^2(G')-1}y_1 \ \ \ \ \text{and}\ \ \ \ x_3=\frac{\rho(\Gamma''_{n,3})}{\rho(\Gamma''_{n,3})+1}x_2.
\]
Together with $\rho(\Gamma''_{n,3})x_{u_3}=x_{v_2}$, we have 
\begin{align*}
\rho(\Gamma''_{n,3})x_{v_2}& =(n-6)x_1+2x_2+2x_3+x_{u_3}=(\rho(\Gamma''_{n,3})+1)x_2-x_{v_2}+x_3+x_{u_3}\\
& =(\rho(\Gamma''_{n,3})+1)x_2-x_{v_2}+\frac{\rho(\Gamma''_{n,3})}{\rho(\Gamma''_{n,3})+1}x_2+\frac{1}{\rho(\Gamma''_{n,3})}x_{v_2},
\end{align*}
so \[
x_{v_2}=\frac{\rho(\Gamma''_{n,3})(\rho^2(\Gamma''_{n,3})+3\rho(\Gamma''_{n,3})+1)}{(\rho(\Gamma''_{n,3})+1)(\rho^2(\Gamma''_{n,3})+\rho(\Gamma''_{n,3})-1)}x_2, 
\]
i.e.,
\[
x_{u_3}=\frac{1}{\rho(\Gamma''_{n,3})}x_{v_2}=\frac{\rho^2(\Gamma''_{n,3})+3\rho(\Gamma''_{n,3})+1}{(\rho(\Gamma''_{n,3})+1)(\rho^2(\Gamma''_{n,3})+\rho(\Gamma''_{n,3})-1)}x_2. 
\]
Let $h(x)=(2n-9)x^3-2(n-4)x^2-(10n-44)x-4n+17$ with $x\ge n-4$. By a direct calculation, we have $h(x)\ge h(n-4)>0$, so  
\[
(2n-9)\rho^3(\Gamma''_{n,3})-(n-4)\rho(\Gamma''_{n,3})^2-(9n-40)\rho(\Gamma''_{n,3})-3n+13>(n-4)\left(\rho^2(\Gamma''_{n,3})+\rho(\Gamma''_{n,3})+1\right).
\]
Thus
\begin{align*}
& \quad\mathbf{y}^\top(\rho(G')-\rho(\Gamma_{n,3}''))\mathbf{x}
=\mathbf{y}^\top(A(G')-A(\Gamma_{n,3}''))\mathbf{x}\\
& =x_{u_2}y_{v_1}+y_{u_2}x_{v_1}+x_{u_3}y_{v_3}+y_{u_3}x_{v_3}-x_{u_2}y_{v_3}-y_{u_2}x_{v_3}-x_{u_3}y_{v_2}-y_{u_3}x_{v_2}\\
%& =x_{u_2}y_1+y_{u_2}x_2+x_{u_3}y_{v_3}+y_{u_3}x_2-x_{u_2}y_{v_3}-y_{u_2}x_2-x_{u_3}y_1-y_{u_3}x_{v_2}\\
& =x_3y_1+y_2x_2+x_{u_3}y_{v_3}+y_{u_3}x_2-x_3y_{v_3}-y_2x_2-x_{u_3}y_1-y_{u_3}x_{v_2}\\
& =(x_3-x_{u_3})(y_1-y_{v_3})+(x_2-x_{v_2})y_{u_3}\\
& =\frac{\rho^3(\Gamma''_{n,3})-4\rho(\Gamma''_{n,3})-1}{(\rho(\Gamma''_{n,3})+1)(\rho^2(\Gamma''_{n,3})+\rho(\Gamma''_{n,3})-1)}\cdot \frac{2n-9}{\rho^2(G')+\rho(G')-1}x_2y_1\\
& \quad -
\frac{\rho^2(\Gamma''_{n,3})+\rho(\Gamma''_{n,3})+1}{(\rho(\Gamma''_{n,3})+1)(\rho^2(\Gamma''_{n,3})+\rho(\Gamma''_{n,3})-1)}\cdot \frac{n-4}{\rho^2(G')-1}x_2y_1\\
& =\frac{\left((2n-9)\rho^3(\Gamma''_{n,3})-(n-4)\rho(\Gamma''_{n,3})^2-(9n-40)\rho(\Gamma''_{n,3})-3n+13\right)\left(\rho^2(G')-1\right)}{(\rho(\Gamma''_{n,3})+1)(\rho^2(\Gamma''_{n,3})+\rho(\Gamma''_{n,3})-1)(\rho^2(G')+\rho(G')-1)(\rho^2(G')-1)}\\
& \quad - \frac{(n-4)\left(\rho^2(\Gamma''_{n,3})+\rho(\Gamma''_{n,3})+1\right)\rho(G')}{(\rho(\Gamma''_{n,3})+1)(\rho^2(\Gamma''_{n,3})+\rho(\Gamma''_{n,3})-1)(\rho^2(G')+\rho(G')-1)(\rho^2(G')-1)}\\
& >0.
\end{align*}
Now as $G'\cong\Gamma_{n,3}$., we have $\rho(\Gamma_{n,3})>\rho(\Gamma_{n,3}'')$.
\end{proof}

\section{Proof of Lemma \ref{I44}}
\begin{proof}
By a direct calculation, $\rho(\Gamma_{8,4})=4.5722>\rho(\Gamma_{8,4}')=4.5513$.
Suppose that  $n\ge9$.
Note that $\Gamma_{n,4}'$ and $\Gamma_{n,4}$ have equal average degree $n-3-\frac{8}{n}$. So $\rho(\Gamma_{n,4}')\ge n-3-\frac{8}{n}$ and $\rho(\Gamma_{n,4})\ge n-3-\frac{8}{n}$.

Note that
\[
Q=\begin{pmatrix}
0 & n-6 & 1 & 2 & 0 & 0\\
1 & n-7 & 1 & 2 & 1 & 0\\
1 & n-6 & 0 & 0 & 1 & 1\\
1 & n-6 & 0 & 0 & 0 & 0\\
0 & n-6 & 1 & 0 & 0 & 0\\
0 & 0 & 1 & 0 & 0 & 0\\
\end{pmatrix}, \qquad
Q'=\begin{pmatrix}
0 & n-7 & 2 & 1 & 2 & 0\\
1 & n-8 & 2 & 0 & 2 & 1\\
1 & n-7 & 1 & 0 & 1 & 1\\
1 & 0 & 0 & 0 & 0 & 0 \\
1 & n-7 & 1 & 0 & 0 & 0\\
0 & n-7 & 2 & 0 & 0 & 0\\
\end{pmatrix}
\]
are  the quotient matrices of $A(\Gamma_{n,4})$ and $A(\Gamma_{n,4}')$ corresponding to proper equitable partitions, respectively.
By a direct calculation, the  characteristic polynomial of $Q$ and $Q'$ are
\[
f(x)=x^6-(n-7)x^5-5(n-5)x^4-(3n-13)x^3+4(2n-11)x^2+4(n-5)x-2n+12,
\]
and
\[
g(x)=x^6-(n-7)x^5-(5n-26)x^4-(4n-19)x^3+(6n-36)x^2+(4n-21)x-n+7
\]
respectively.
%Since $\rho(\Gamma_{n,4}')\ge n-3-\frac{8}{n}$, we have by Lemma \ref{QM} that $\rho(\Gamma_{n,4})$ is the largest root of $f(x)=0$ and $\rho(\Gamma_{n,4}')$ is the largest root of $g(x)=0$. 
For $x\ge n-3-\frac{8}{n}$, let
\[
h(x):=g(x)-f(x)=x^4-(n-6)x^3-2(n-4)x^2-x+n-5.
\]
As $h''(x)=12x^2-6(n-6)x-4(n-4)\ge h''(n-4)=2(n-4)(3n-8)>0$, we have $h'(x)=4x^3-3(n-6)x^2-4(n-4)x-1\ge h'(n-4)=(n-3)(n^2-7n+11)>0$.
So $h(x)\ge h(n-3-\frac{8}{n})=\frac{1}{n^4}(n^7-15n^6+47n^5+173n^4-744n^3-1024n^2+3072n+4096)>0$, i.e., $g(x)>f(x)$. Now by Lemma \ref{QM}, $\rho(\Gamma_{n,4})>\rho(\Gamma_{n,4}')$.
\end{proof}

\section{Proof of Lemma  \ref{I5}}

\begin{proof}
Note that $\Gamma_{n,5}'+u'v'\cong\Gamma_{n,5}''$. So by Lemma \ref{addedges}, $\rho(\Gamma_{n,5}'')>\rho(\Gamma_{n,5}')$.
By a direct calculation, we have $5.8346=\rho(\Gamma_{10,5})>\rho(\Gamma_{10,5}'')=5.8259$ and $6.9403=\rho(\Gamma_{11,5})>\rho(\Gamma_{11,5}'')=6.9314$.
Suppose that $n\ge12$.
By Lemma \ref{QM} and a direct calculation,  $\rho(\Gamma_{n,5})$ and $\rho(\Gamma_{n,5}'')$ are the largest root of $xf(x)=0$ and $xg(x)=0$, respectively, where
\[
f(x)=x^6-(n-8)x^5-(5n-28)x^4-(n+2)x^3+(16n-106)x^2+(6n-34)x-13n+91
\]
and
\[
g(x)=x^6-(n-8)x^5-(5n-29)x^4-(2n-5)x^3+(14n-100)x^2+(6n-35)x-11n+88.
\]
Since $f(n-4)<0$ and $g(n-4)<0$, we have $\rho(\Gamma_{n,5})>n-4$ and  $\rho(\Gamma_{n,5}'')>n-4$.
Assume that $x>n-4$.
Let $h(x):=g(x)-f(x)=x^4-(n-7)x^3-(2n-6)x^2-x+2n-3$.
Then we can check $h''(x)=12x^2-6(n-7)x-2(2n-6)>h''(n-4)=6n^2-34n+36>0$, implying that $h'(x)>h'(n-4)=n^3-7n^2+4n+31>0$, so $h(x)>h(n-4)=n^3-14n^2+65n-95>0$.
Hence, $g(x)>f(x)$, so $\rho(\Gamma_{n,5})>\rho(\Gamma_{n,5}'')$.
\end{proof}

\section{Proof of Lemma \ref{I4}}
\begin{proof}

For convenience, let $a=|V_4|=|V_1|$. Then $|V_4'|=|V_1'|=a-1$ and $|V_2|=|V_2'|=n-2a-6$. Let $\mathbf{x}$ and $\mathbf{y}$ be the unit Perron vector of $\Gamma_{n, |I|}^{*}$ and $\Gamma_{n, |I|}^{**}$, respectively.
By symmetry, denote $x_i$ the entry of $\mathbf{x}$ at each vertex from $V_i$ for $i=1,\dots,9$, and $y_j$ the entry of $\mathbf{y}$ at each vertex from $V_j'$ for $j=1,\dots,8$.

We first prove that $\rho(\Gamma_{n,|I|})>\rho(\Gamma_{n, |I|}^{**})$.
Fix $u_1\in V_2'$ in $\Gamma_{n, |I|}^{**}$.
Let $V_3'=\{u_2, u_3\}$, $V_6'=\{v_1\}$ and $V_7'=\{v_2\}$.
Note that $\rho(\Gamma_{n, |I|}^{**})(y_7-y_5)=y_3-(a-1)y_1-(n-2a-6)y_2$,  $\rho(\Gamma_{n, |I|}^{**})(y_8-y_6)=y_3-(n-2a-6)y_2$ and $(\rho(\Gamma_{n, |I|}^{**})+1)(y_3-y_2)=y_7-y_5+y_8-y_6$. Then 
\[
\rho(\Gamma_{n, |I|}^{**})(\rho(\Gamma_{n, |I|}^{**})+1)(y_3-y_2)=\rho(\Gamma_{n, |I|}^{**})(y_7-y_5+y_8-y_6)=2y_3-(a-1)y_1-2(n-2a-6)y_2,
\]
so
\[
(\rho(\Gamma_{n, |I|}^{**})^2+\rho(\Gamma_{n, |I|}^{**})-2)(y_3-y_2)=-(a-1)y_1-2(n-2a-7)y_2<0,
\]
where the last inequality follows as $n\ge2|I|=2(a+5)$.
Hence, $y_3-y_2<0$, so
\[
\rho(\Gamma_{n, |I|}^{**})(y_7-y_6)=2y_3-(n-2a-6)y_2<2(y_3-y_2)<0,
\]
i.e., $y_7-y_6<0$.
As $N_G(v_1)\subset N_G(u_3)$, we have $y_6=y_{v_1}<y_{u_3}=y_3$.
And by $\rho(\Gamma_{n, |I|}^{**})y_3=(a-1)y_1+(n-2a-6)y_2+y_3+y_5+y_7+y_8$, we have
\[
(\rho(\Gamma_{n, |I|}^{**})+1)(y_2-2y_3)=-(a-1)y_1-(n-2a-6)y_2-2y_3-2y_7-2y_8+y_6<0,
\]
i.e., $y_2-2y_3<0$.
Let $\Gamma'=\Gamma_{n, |I|}^{**}-u_1v_1-u_2v_2-u_3v_2+u_1v_2+u_2v_1+u_3v_1$.
Note that $U_1\cup\dots U_7$ is a equitable partition of $V(\Gamma')$, where $U_1=V_1'\cup\{u_1\}$, $U_2=V_2'\setminus\{u_1\}$, $U_4=V_4'\cup V_7'$, $U_7=V_8'$ and $U_i=V_i'$ for $i=3,5,6$. Then $|U_1|=|U_4|=a$ and $|U_2|=n-2a-7$.
Let $\mathbf{z}$ be the unit Perron vector of $\Gamma'$.
By symmetry, denote $z_i$ the entry of $\mathbf{z}$ at each vertex from $U_i$ for $i=1,\dots,7$.
Note that $\rho(\Gamma')(z_4-z_6)=z_1-(n-2a-7)z_2-2z_3$ and $(\rho(\Gamma')+1)(z_1-z_2)=z_4-z_6$. Then
\[
\rho(\Gamma')(\rho(\Gamma')+1)(z_1-z_2)=z_1-(n-2a-7)z_2-2z_3,
\]
so
\[
(\rho^2(\Gamma')+\rho(\Gamma')-1)(z_1-z_2)=-(n-2a-8)z_2-2z_3<0,
\]
which implies that $z_1-z_2<0$, and then $z_4-z_6<0$.
Since $(\rho(\Gamma')+1)(z_1-z_3)=z_5+z_4-z_6-z_7$ and $\rho(\Gamma')z_3=az_1+(n-2a-7)z_2+z_3+z_5+z_6+z_7$, we have
\[
(\rho(\Gamma')+1)(z_1-2z_3)=z_4-az_1-(n-2a-7)z_2-2z_3-2z_6-2z_7<0,
\]
implying that $z_1-2z_3<0$.
Then
\begin{align*}
\mathbf{z}^\top(\rho(\Gamma')-\rho(\Gamma_{n, |I|}^{**}))\mathbf{y}
& =z_{u_1}y_{v_2}+y_{u_1}z_{v_2}+z_{u_2}y_{v_1}+y_{u_2}z_{v_1}+z_{u_3}y_{v_1}+y_{u_3}z_{v_1}\\
& \quad-z_{u_1}y_{v_1}-y_{u_1}z_{v_1}-z_{u_2}y_{v_2}-y_{u_2}z_{v_2}-z_{u_3}y_{v_2}-y_{u_3}z_{v_2}\\
& =z_1y_7+y_2z_4+2z_3y_6+2y_3z_6-z_1y_6-y_2z_6-2z_3y_7-2y_3z_4\\
& =(y_2-2y_3)(z_4-z_6)+(y_7-y_6)(z_1-2z_3)\\
& >0,
\end{align*}
i.e., $\rho(\Gamma')>\rho(\Gamma_{n, |I|}^{**})$.
Thus $\rho(\Gamma_{n,|I|})>\rho(\Gamma_{n, |I|}^{**})$ as $\Gamma'\cong\Gamma_{n,|I|}$.

Next we prove that $\rho(\Gamma_{n,|I|})>\rho(\Gamma_{n, |I|}^*)$. 
Note that $\Gamma_{n, |I|}^*\cong \Gamma_{n, |I|}^{**}$ if $a=1$. 
Suppose in the following that $a\ge2$. 
Suppose first that $x_5<x_7$.
%We first prove that $\rho(\Gamma_{n, |I|}^*)\le\rho(\Gamma_{n, |I|}^{**})$.
%If $a=1$, then $\Gamma_{n, |I|}^*\cong \Gamma_{n, |I|}^{**}$, i.e., $\rho(\Gamma_{n, |I|}^*)=\rho(\Gamma_{n, |I|}^{**})$.
%Suppose that $a\ge2$.
%Note that for $i=4,5$,
%\[
%n-a-3=\Delta(\Gamma_i)\ge\rho(\Gamma_i)\ge\frac{1}{n}\sum_{v\in V(\Gamma_i)}d_G(v)>n-2a-5.
%\]
%
%
%
%%Suppose first that $n\ge3a+6$.
%Let $\mathbf{x}$ and $\mathbf{y}$ be the unit Perron vector of $\Gamma_{n, |I|}^{*}$ and $\Gamma_{n, |I|}^{**}$, respectively.
%By symmetry, denote $x_i$ the entry of $\mathbf{x}$ at each vertex from $V_i$ for $i=1,\dots,9$, and $y_j$ the entry of $\mathbf{y}$ at each vertex from $V_j'$ for $j=1,\dots,8$.
%As $\rho(\Gamma_{n, |I|}^*)(x_4-x_8)=x_1-(n-2a-6)x_2$ and $(\rho(\Gamma_{n, |I|}^*)+1)(x_1-x_2)=x_4+x_5-x_7-x_8$, we have
%\[
%\rho(\Gamma_{n, |I|}^*)(\rho(\Gamma_{n, |I|}^*)+1)(x_1-x_2)=(a+1)x_1-2(n-2a-6)x_2,
%\]
%so
%\[
%(\rho^2(\Gamma_{n, |I|}^*)+\rho(\Gamma_{n, |I|}^*)-a-1)(x_1-x_2)=(-2n+5a+13)x_2\le0, %a=2 取等号
%\]
%where the last inequality follows as $n\ge\frac{5}{2}|I|-6=\frac{5a+13}{2}$.
%On the other hand, as $\rho(\Gamma_{n, |I|}^*)>n-2a-5>a+1$, we have
%\[
%\rho(\Gamma_{n, |I|}^*)(x_5-x_7)=ax_1-(n-2a-6)x_2=(\rho^2(\Gamma_{n, |I|}^*)+\rho(\Gamma_{n, |I|}^*)-1)(x_1-x_2)-x_2<0,
%\]
%implying that $x_5-x_7<0$.
%%Since $\rho(\Gamma_{n, |I|}^*)>n-2a-5>a+1$, we have $x_1-x_2\le0$, and so $x_5-x_7\le0$.
Note that $\rho(\Gamma_{n, |I|}^{**})(y_7-y_5)=y_3-(a-1)y_1-(n-2a-6)y_2$, $\rho(\Gamma_{n, |I|}^{**})(y_8-y_4)=y_3-y_1$ and $(\rho(\Gamma_{n, |I|}^{**})+1)(y_3-y_1)=y_7-y_5+y_8-y_4$. We have
\[
\rho(\Gamma_{n, |I|}^{**})(\rho(\Gamma_{n, |I|}^{**})+1)(y_3-y_1)=2y_3-ay_1-(n-2a-6)y_2,
\]
so
\[
(\rho^2(\Gamma_{n, |I|}^{**})+\rho(\Gamma_{n, |I|}^{**})-2)(y_3-y_1)=(-a+2)y_1-(n-2a-6)y_2<0.
\]
Then $y_3-y_1<0$, and so $\rho(\Gamma_{n, |I|}^{**})(y_7-y_5)<y_3-y_1<0$, implying that
 $y_7-y_5<0$.
Fix $v_0\in V_1\subset V(\Gamma_{n, |I|}^*)$.
Let $V_5=\{u\}$ and $V_7=\{w\}$ in $\Gamma_{n, |I|}^*$.
As
\[
\Gamma_{n, |I|}^{**}\cong\Gamma_{n, |I|}^*-\{vu: v\in V_1\setminus\{v_0\}\}+\{vw: v\in V_1\setminus\{v_0\}\},
\]
we have
\begin{align*}
\mathbf{x}^\top(\rho(\Gamma_{n, |I|}^*)-\rho(\Gamma_{n, |I|}^{**}))\mathbf{y}&=\sum_{v\in V_1\setminus\{v_0\}}(x_vy_u+y_vx_u-x_vy_w-y_vx_w)\\
& =\sum_{v\in V_1\setminus\{v_0\}}(x_1y_7+y_1x_5-x_1y_5-y_1x_7)\\
& =\sum_{v\in V_1\setminus\{v_0\}}(x_1(y_7-y_5)+y_1(x_5-x_7))\\
& <0,
\end{align*}
i.e., $\rho(\Gamma_{n, |I|}^*)<\rho(\Gamma_{n, |I|}^{**})$.
So $\rho(\Gamma_{n, |I|})>\rho(\Gamma_{n, |I|}^{**})>\rho(\Gamma_{n, |I|}^*)$.

Suppose next that $x_5\ge x_7$.
Denote by $V_i=\{v_i\}$ for $i=3,5,\dots, 9$. Take $v_2\in V_2$.
Let $G'=\Gamma_{n, |I|}^*-v_3v_7-\{uv_7:u\in V_2\setminus\{v_2\}\}+v_3v_8+\{uv_5:u\in V_2\setminus\{v_2\}\}$.
Then $G'\cong \Gamma_{n, |I|}$.
Note that $W_1\cup\dots W_7$ is a equitable partition of $V(G')$, where  $W_2=V_2\setminus\{v_2\}$, $W_3=\{v_2,v_3\}$, $W_5=\{v_5,v_6\}$, $W_6=\{v_7,v_9\}$, $W_7=V_8$ and $W_i=V_i$ for $i=1,4$. Then $|W_1|=|W_4|=a$ and $|W_2|=n-2a-7$.
Let $\mathbf{z'}$ be the unit Perron vector of $G'$.
By symmetry, denote $z_i'$ the entry of $\mathbf{z'}$ at each vertex from $W_i$ for $i=1,\dots,7$.
Since $\rho(G')(z_5'-z_6')=az_1'+(n-2a-7)z_2'>0$ and $\rho(G')(z_7'-z_6')=(n-2a-7)z_2'+z_3'>0$, we have $z_5'-z_6'>0$ and $z_7'-z_6'>0$.
Note that $\rho(\Gamma_{n, |I|}^*)(x_7-x_8)=x_3$.
Then
\begin{align*}
& \quad \mathbf{z'}^\top\left(\rho(G')-\rho(\Gamma_{n, |I|}^{*})\right)\mathbf{x}\\
& = \sum_{v\in V_2\setminus\{v_2\}}(x_vz_{v_5}'+x_5z_v-x_vz_{v_7}'-x_7z_v)+x_3z_{v_8}'+x_8z_{v_3}'-x_3z_{v_7}'-x_7z_{v_3}'\\
& =(n-2a-7)(x_2z_5'+x_5z_2'-x_2z_6'-x_7z_2')+x_3z_7'+x_8z_3'-x_3z_6'-x_7z_3'\\
& =(n-2a-7)(x_2(z_5'-z_6')+(x_5-x_7)z_2')+x_3(z_7'-z_6')+(x_8-x_7)z_3'\\
& =(n-2a-7)(x_2(z_5'-z_6')+(x_5-x_7)z_2')+\frac{n-2a-7}{\rho(G')}x_3z_2'+\left(\frac{1}{\rho(G')}-\frac{1}{\rho(\Gamma_{n, |I|}^*)}\right)x_3z_3',
\end{align*}
i.e.,
\[
\left(\rho(G')-\rho(\Gamma_{n, |I|}^{*})\right)\left(\mathbf{z'}^\top\mathbf{x}+\frac{x_3z_3'}{\rho(G')\rho(\Gamma_{n, |I|}^*)}\right)>0,
\]
so $\rho(G')>\rho(\Gamma_{n, |I|}^*)$.
Thus $\rho(\Gamma_{n,|I|})>\rho(\Gamma_{n, |I|}^*)$ as $G'\cong\Gamma_{n,|I|}$.
\end{proof}

\end{appendices}
\end{document}